    \newtheorem{thm}{Theorem}                     [section]
    \newtheorem{thm*}{Theorem}
    \newtheorem{prop}[thm]{Proposition}
    \newtheorem{lemma}[thm]{Lemma}
    \newtheorem{cor}[thm]{Corollary}
    \newtheorem{lemma*}{Lemma}    
    \newtheorem{defn}[thm]{Definition}                 
    \newtheorem{rems}[thm]{Remark}                     
    \newtheorem{rems*}{Remark}   
\newcommand{\ndef}{\newcommand*}
\def\rndef{\renewcommand}
\ndef{\myaddress}[1]{\begin{center} \it\small #1 \end{center}}
\ndef{\clA}{{\mathcal A}} \ndef{\rmA}{{\mathrm A}} \ndef{\mbA}{{\mathbb A}} \ndef{\bfA}{{\mathbf A}} \ndef{\euA}{{\EuScript A}} \ndef{\frA}{{\mathfrak A}}
\ndef{\clB}{{\mathcal B}} \ndef{\rmB}{{\mathrm B}} \ndef{\mbB}{{\mathbb B}} \ndef{\bfB}{{\mathbf B}} \ndef{\euB}{{\EuScript B}} \ndef{\frB}{{\mathfrak B}}
\ndef{\clC}{{\mathcal C}} \ndef{\rmC}{{\mathrm C}} \ndef{\mbC}{{\mathbb C}} \ndef{\bfC}{{\mathbf C}} \ndef{\euC}{{\EuScript C}} \ndef{\frC}{{\mathfrak C}}
\ndef{\clD}{{\mathcal D}} \ndef{\rmD}{{\mathrm D}} \ndef{\mbD}{{\mathbb D}} \ndef{\bfD}{{\mathbf D}} \ndef{\euD}{{\EuScript D}} \ndef{\frD}{{\mathfrak D}}
\ndef{\clE}{{\mathcal E}} \ndef{\rmE}{{\mathrm E}} \ndef{\mbE}{{\mathbb E}} \ndef{\bfE}{{\mathbf E}} \ndef{\euE}{{\EuScript E}} \ndef{\frE}{{\mathfrak E}}
\ndef{\clF}{{\mathcal F}} \ndef{\rmF}{{\mathrm F}} \ndef{\mbF}{{\mathbb F}} \ndef{\bfF}{{\mathbf F}} \ndef{\euF}{{\EuScript F}} \ndef{\frF}{{\mathfrak F}}
\ndef{\clG}{{\mathcal G}} \ndef{\rmG}{{\mathrm G}} \ndef{\mbG}{{\mathbb G}} \ndef{\bfG}{{\mathbf G}} \ndef{\euG}{{\EuScript G}} \ndef{\frG}{{\mathfrak G}}
\ndef{\clH}{{\mathcal H}} \ndef{\rmH}{{\mathrm H}} \ndef{\mbH}{{\mathbb H}} \ndef{\bfH}{{\mathbf H}} \ndef{\euH}{{\EuScript H}} \ndef{\frH}{{\mathfrak H}}
\ndef{\clI}{{\mathcal I}} \ndef{\rmI}{{\mathrm I}} \ndef{\mbI}{{\mathbb I}} \ndef{\bfI}{{\mathbf I}} \ndef{\euI}{{\EuScript I}} \ndef{\frI}{{\mathfrak I}}
\ndef{\clJ}{{\mathcal J}} \ndef{\rmJ}{{\mathrm J}} \ndef{\mbJ}{{\mathbb J}} \ndef{\bfJ}{{\mathbf J}} \ndef{\euJ}{{\EuScript J}} \ndef{\frJ}{{\mathfrak J}}
\ndef{\clK}{{\mathcal K}} \ndef{\rmK}{{\mathrm K}} \ndef{\mbK}{{\mathbb K}} \ndef{\bfK}{{\mathbf K}} \ndef{\euK}{{\EuScript K}} \ndef{\frK}{{\mathfrak K}}
\ndef{\clL}{{\mathcal L}} \ndef{\rmL}{{\mathrm L}} \ndef{\mbL}{{\mathbb L}} \ndef{\bfL}{{\mathbf L}} \ndef{\euL}{{\EuScript L}} \ndef{\frL}{{\mathfrak L}}
\ndef{\clM}{{\mathcal M}} \ndef{\rmM}{{\mathrm M}} \ndef{\mbM}{{\mathbb M}} \ndef{\bfM}{{\mathbf M}} \ndef{\euM}{{\EuScript M}} \ndef{\frM}{{\mathfrak M}}
\ndef{\clN}{{\mathcal N}} \ndef{\rmN}{{\mathrm N}} \ndef{\mbN}{{\mathbb N}} \ndef{\bfN}{{\mathbf N}} \ndef{\euN}{{\EuScript N}} \ndef{\frN}{{\mathfrak N}}
\ndef{\clO}{{\mathcal O}} \ndef{\rmO}{{\mathrm O}} \ndef{\mbO}{{\mathbb O}} \ndef{\bfO}{{\mathbf O}} \ndef{\euO}{{\EuScript O}} \ndef{\frO}{{\mathfrak O}}
\ndef{\clP}{{\mathcal P}} \ndef{\rmP}{{\mathrm P}} \ndef{\mbP}{{\mathbb P}} \ndef{\bfP}{{\mathbf P}} \ndef{\euP}{{\EuScript P}} \ndef{\frP}{{\mathfrak P}}
\ndef{\clQ}{{\mathcal Q}} \ndef{\rmQ}{{\mathrm Q}} \ndef{\mbQ}{{\mathbb Q}} \ndef{\bfQ}{{\mathbf Q}} \ndef{\euQ}{{\EuScript Q}} \ndef{\frQ}{{\mathfrak Q}}
\ndef{\clR}{{\mathcal R}} \ndef{\rmR}{{\mathrm R}} \ndef{\mbR}{{\mathbb R}} \ndef{\bfR}{{\mathbf R}} \ndef{\euR}{{\EuScript R}} \ndef{\frR}{{\mathfrak R}}
\ndef{\clS}{{\mathcal S}} \ndef{\rmS}{{\mathrm S}} \ndef{\mbS}{{\mathbb S}} \ndef{\bfS}{{\mathbf S}} \ndef{\euS}{{\EuScript S}} \ndef{\frS}{{\mathfrak S}}
\ndef{\clT}{{\mathcal T}} \ndef{\rmT}{{\mathrm T}} \ndef{\mbT}{{\mathbb T}} \ndef{\bfT}{{\mathbf T}} \ndef{\euT}{{\EuScript T}} \ndef{\frT}{{\mathfrak T}}
\ndef{\clU}{{\mathcal U}} \ndef{\rmU}{{\mathrm U}} \ndef{\mbU}{{\mathbb U}} \ndef{\bfU}{{\mathbf U}} \ndef{\euU}{{\EuScript U}} \ndef{\frU}{{\mathfrak U}}
\ndef{\clV}{{\mathcal V}} \ndef{\rmV}{{\mathrm V}} \ndef{\mbV}{{\mathbb V}} \ndef{\bfV}{{\mathbf V}} \ndef{\euV}{{\EuScript V}} \ndef{\frV}{{\mathfrak V}}
\ndef{\clW}{{\mathcal W}} \ndef{\rmW}{{\mathrm W}} \ndef{\mbW}{{\mathbb W}} \ndef{\bfW}{{\mathbf W}} \ndef{\euW}{{\EuScript W}} \ndef{\frW}{{\mathfrak W}}
\ndef{\clX}{{\mathcal X}} \ndef{\rmX}{{\mathrm X}} \ndef{\mbX}{{\mathbb X}} \ndef{\bfX}{{\mathbf X}} \ndef{\euX}{{\EuScript X}} \ndef{\frX}{{\mathfrak X}}
\ndef{\clY}{{\mathcal Y}} \ndef{\rmY}{{\mathrm Y}} \ndef{\mbY}{{\mathbb Y}} \ndef{\bfY}{{\mathbf Y}} \ndef{\euY}{{\EuScript Y}} \ndef{\frY}{{\mathfrak Y}}
\ndef{\clZ}{{\mathcal Z}} \ndef{\rmZ}{{\mathrm Z}} \ndef{\mbZ}{{\mathbb Z}} \ndef{\bfZ}{{\mathbf Z}} \ndef{\euZ}{{\EuScript Z}} \ndef{\frZ}{{\mathfrak Z}}
\ndef{\tA}{{\widetilde A}} \ndef{\tcA}{{\widetilde\clA}} \ndef{\ttcA}{\widetilde{\tcA}} \ndef{\sfA}{{\textsf A}} \ndef{\ttA}{\widetilde{\tA}} \ndef{\dzA}{{A^\sharp}}
\ndef{\tB}{{\widetilde B}} \ndef{\tcB}{{\widetilde\clB}} \ndef{\ttcB}{\widetilde{\tcB}} \ndef{\sfB}{{\textsf B}} \ndef{\ttB}{\widetilde{\tB}} \ndef{\dzB}{{B^\sharp}}
\ndef{\tC}{{\widetilde C}} \ndef{\tcC}{{\widetilde\clC}} \ndef{\ttcC}{\widetilde{\tcC}} \ndef{\sfC}{{\textsf C}} \ndef{\ttC}{\widetilde{\tC}} \ndef{\dzC}{{C^\sharp}}
\ndef{\tD}{{\widetilde D}} \ndef{\tcD}{{\widetilde\clD}} \ndef{\ttcD}{\widetilde{\tcD}} \ndef{\sfD}{{\textsf D}} \ndef{\ttD}{\widetilde{\tD}} \ndef{\dzD}{{D^\sharp}}
\ndef{\tE}{{\widetilde E}} \ndef{\tcE}{{\widetilde\clE}} \ndef{\ttcE}{\widetilde{\tcE}} \ndef{\sfE}{{\textsf E}} \ndef{\ttE}{\widetilde{\tE}} \ndef{\dzE}{{E^\sharp}}
\ndef{\tF}{{\widetilde F}} \ndef{\tcF}{{\widetilde\clF}} \ndef{\ttcF}{\widetilde{\tcF}} \ndef{\sfF}{{\textsf F}} \ndef{\ttF}{\widetilde{\tF}} \ndef{\dzF}{{F^\sharp}}
\ndef{\tG}{{\widetilde G}} \ndef{\tcG}{{\widetilde\clG}} \ndef{\ttcG}{\widetilde{\tcG}} \ndef{\sfG}{{\textsf G}} \ndef{\ttG}{\widetilde{\tG}} \ndef{\dzG}{{G^\sharp}}
\ndef{\tH}{{\widetilde H}} \ndef{\tcH}{{\widetilde\clH}} \ndef{\ttcH}{\widetilde{\tcH}} \ndef{\sfH}{{\textsf H}} \ndef{\ttH}{\widetilde{\tH}} \ndef{\dzH}{{H^\sharp}}
\ndef{\tI}{{\widetilde I}} \ndef{\tcI}{{\widetilde\clI}} \ndef{\ttcI}{\widetilde{\tcI}} \ndef{\sfI}{{\textsf I}} \ndef{\ttI}{\widetilde{\tI}} \ndef{\dzI}{{I^\sharp}}
\ndef{\tJ}{{\widetilde J}} \ndef{\tcJ}{{\widetilde\clJ}} \ndef{\ttcJ}{\widetilde{\tcJ}} \ndef{\sfJ}{{\textsf J}} \ndef{\ttJ}{\widetilde{\tJ}} \ndef{\dzJ}{{J^\sharp}}
\ndef{\tK}{{\widetilde K}} \ndef{\tcK}{{\widetilde\clK}} \ndef{\ttcK}{\widetilde{\tcK}} \ndef{\sfK}{{\textsf K}} \ndef{\ttK}{\widetilde{\tK}} \ndef{\dzK}{{K^\sharp}}
\ndef{\tL}{{\widetilde L}} \ndef{\tcL}{{\widetilde\clL}} \ndef{\ttcL}{\widetilde{\tcL}} \ndef{\sfL}{{\textsf L}} \ndef{\ttL}{\widetilde{\tL}} \ndef{\dzL}{{L^\sharp}}
\ndef{\tM}{{\widetilde M}} \ndef{\tcM}{{\widetilde\clM}} \ndef{\ttcM}{\widetilde{\tcM}} \ndef{\sfM}{{\textsf M}} \ndef{\ttM}{\widetilde{\tM}} \ndef{\dzM}{{M^\sharp}}
\ndef{\tN}{{\widetilde N}} \ndef{\tcN}{{\widetilde\clN}} \ndef{\ttcN}{\widetilde{\tcN}} \ndef{\sfN}{{\textsf N}} \ndef{\ttN}{\widetilde{\tN}} \ndef{\dzN}{{N^\sharp}}
\ndef{\tO}{{\widetilde O}} \ndef{\tcO}{{\widetilde\clO}} \ndef{\ttcO}{\widetilde{\tcO}} \ndef{\sfO}{{\textsf O}} \ndef{\ttO}{\widetilde{\tO}} \ndef{\dzO}{{O^\sharp}}
\ndef{\tP}{{\widetilde P}} \ndef{\tcP}{{\widetilde\clP}} \ndef{\ttcP}{\widetilde{\tcP}} \ndef{\sfP}{{\textsf P}} \ndef{\ttP}{\widetilde{\tP}} \ndef{\dzP}{{P^\sharp}}
\ndef{\tQ}{{\widetilde Q}} \ndef{\tcQ}{{\widetilde\clQ}} \ndef{\ttcQ}{\widetilde{\tcQ}} \ndef{\sfQ}{{\textsf Q}} \ndef{\ttQ}{\widetilde{\tQ}} \ndef{\dzQ}{{Q^\sharp}}
\ndef{\tR}{{\widetilde R}} \ndef{\tcR}{{\widetilde\clR}} \ndef{\ttcR}{\widetilde{\tcR}} \ndef{\sfR}{{\textsf R}} \ndef{\ttR}{\widetilde{\tR}} \ndef{\dzR}{{R^\sharp}}
\ndef{\tS}{{\widetilde S}} \ndef{\tcS}{{\widetilde\clS}} \ndef{\ttcS}{\widetilde{\tcS}} \ndef{\sfS}{{\textsf S}} \ndef{\ttS}{\widetilde{\tS}} \ndef{\dzS}{{S^\sharp}}
\ndef{\tT}{{\widetilde T}} \ndef{\tcT}{{\widetilde\clT}} \ndef{\ttcT}{\widetilde{\tcT}} \ndef{\sfT}{{\textsf T}} \ndef{\ttT}{\widetilde{\tT}} \ndef{\dzT}{{T^\sharp}}
\ndef{\tU}{{\widetilde U}} \ndef{\tcU}{{\widetilde\clU}} \ndef{\ttcU}{\widetilde{\tcU}} \ndef{\sfU}{{\textsf U}} \ndef{\ttU}{\widetilde{\tU}} \ndef{\dzU}{{U^\sharp}}
\ndef{\tV}{{\widetilde V}} \ndef{\tcV}{{\widetilde\clV}} \ndef{\ttcV}{\widetilde{\tcV}} \ndef{\sfV}{{\textsf V}} \ndef{\ttV}{\widetilde{\tV}} \ndef{\dzV}{{V^\sharp}}
\ndef{\tW}{{\widetilde W}} \ndef{\tcW}{{\widetilde\clW}} \ndef{\ttcW}{\widetilde{\tcW}} \ndef{\sfW}{{\textsf W}} \ndef{\ttW}{\widetilde{\tW}} \ndef{\dzW}{{W^\sharp}}
\ndef{\tX}{{\widetilde X}} \ndef{\tcX}{{\widetilde\clX}} \ndef{\ttcX}{\widetilde{\tcX}} \ndef{\sfX}{{\textsf X}} \ndef{\ttX}{\widetilde{\tX}} \ndef{\dzX}{{X^\sharp}}
\ndef{\tY}{{\widetilde Y}} \ndef{\tcY}{{\widetilde\clY}} \ndef{\ttcY}{\widetilde{\tcY}} \ndef{\sfY}{{\textsf Y}} \ndef{\ttY}{\widetilde{\tY}} \ndef{\dzY}{{Y^\sharp}}
\ndef{\tZ}{{\widetilde Z}} \ndef{\tcZ}{{\widetilde\clZ}} \ndef{\ttcZ}{\widetilde{\tcZ}} \ndef{\sfZ}{{\textsf Z}} \ndef{\ttZ}{\widetilde{\tZ}} \ndef{\dzZ}{{Z^\sharp}}
\ndef{\bfa}{{\mathbf a}}
\ndef{\bfb}{{\mathbf b}}
\ndef{\bfc}{{\mathbf c}}
\ndef{\bfd}{{\mathbf d}}
\ndef{\euu}{{\EuScript u}}
  \ndef{\eps}{\varepsilon}
\let\leq\leqslant
\ndef{\lims}[1]{\lim\limits_{#1}}
\ndef{\sums}[1]{\sum\limits_{#1}}
\ndef{\ints}[1]{\int_{#1}}
\ndef{\sups}[1]{\sup\limits_{#1}}
\ndef{\liminfty}[1]{\lims{#1\to\infty}}
\ndef{\suminf}[1]{\sums{#1=1}^\infty}
\ndef{\limo}[1]{\omega\mbox{-}\!\!\!\lims{#1\to\infty}}          
\ndef{\limL}[1]{\rmL\mbox{-}\!\!\!\lims{#1\to\infty}}            
\ndef{\limLOne}[1]{\clL_1\mbox{-}\!\lims{#1}}
\ndef{\tildelimo}[1]{\tilde\omega\mbox{-}\!\!\!\lims{#1\to\infty}}
\ndef{\slim}{\mathrm{s}\mbox{-}\!\!\lim}          
\ndef{\wlim}{\mathrm{w}\mbox{-}\!\lim}          
\ndef{\Aut}{\operatorname{Aut}}      
\ndef{\Ch}{\operatorname{ch}}        
\ndef{\End}{\operatorname{End}}      
\ndef{\Hom}{\operatorname{Hom}}      
\rndef{\ker}{\operatorname{ker}}      
\ndef{\coker}{\operatorname{coker}}      
\ndef{\im}{\operatorname{im}}        
\ndef{\Log}{\operatorname{Log}}      
\ndef{\OP}{\operatorname{OP}}        
\ndef{\Op}{\operatorname{Op}}        
\ndef{\Symb}{\operatorname{Symb}}    
\ndef{\Tr}{\operatorname{Tr}}        
\ndef{\Wres}{\operatorname{Wres}}    
\ndef{\cl}{\operatorname{cl}}        
\ndef{\com}{\operatorname{com}}
\ndef{\const}{\operatorname{const}}  
\ndef{\conv}{\operatorname{conv}}    
\rndef{\det}{\operatorname{det}}     
\ndef{\detFK}[1]{\Delta\brs{#1}} 
\ndef{\detFKrel}[2]{\Delta_{#2}\brs{#1}} 
\ndef{\adj}{\operatorname{adj}}    
\ndef{\diag}{\operatorname{diag}}    
\ndef{\dist}{\operatorname{dist}}    
\ndef{\dom}{\operatorname{dom}}      
\ndef{\ec}{\operatorname{ec}}        
\ndef{\id}{\mathrm{Id}}                        
\ndef{\ind}{\operatorname{ind}}      
\ndef{\mydeg}{\operatorname{deg}}    
\ndef{\op}{\operatorname{op}}
\ndef{\rank}{\operatorname{rank}}
\ndef{\res}{\operatorname{res}}      
\ndef{\rng}{\operatorname{ran}}      
\ndef{\sflow}{\operatorname{sf}}     
\ndef{\isf}{\operatorname{isf}}      
\ndef{\sign}{\operatorname{sign}}    
\ndef{\sgn}{\operatorname{sgn}}      
\ndef{\sing}{\operatorname{sing}}    
\ndef{\supp}{\operatorname{supp}}    
\ndef{\tr}{\operatorname{tr}}        
\ndef{\var}{\operatorname{var}}      
\ndef{\vol}{\operatorname{vol}}      
\ndef{\wn}{\operatorname{wn}}        
\ndef{\wres}{\operatorname{wres}}    
\rndef{\Im}{\operatorname{Im}}       
\rndef{\Re}{\operatorname{Re}}       
\ndef{\prng}[1]{\mathrm R_{#1}} 
\ndef{\pker}[1]{\mathrm N_{#1}} 
\ndef{\rprng}[2]{\mathrm R_{#1}^{#2}}           
\ndef{\rpker}[2]{\mathrm N_{#1}^{#2}}           
\ndef{\rsupp}[1]{\supp_r(#1)}
\ndef{\lsupp}[1]{\supp_l(#1)}
\ndef{\rslv}[1]{R_z(#1)}      
\ndef{\HH}{H}                 
\ndef{\tHH}{\tilde \HH}       
\ndef{\VV}{V}                 
\ndef{\Rz}{R_z}               
\ndef{\tRz}{\tR_z}            
\ndef{\psif}[1]{#1^{[1]}} 
\ndef{\WPlus}[1]{W_{#1}(\mbR)} 
\newcommand{\Phia}{\Phi^{(a)}}
\newcommand{\Phis}{\Phi^{(s)}}
\newcommand{\xia}{\xi^{(a)}}
\newcommand{\xis}{\xi^{(s)}}
\ndef{\bndl}{\xi}                         
\ndef{\bndlA}{\eta}                       
\ndef{\GlueMap}{\varphi}                  
\ndef{\ChartMap}{h}                       
\ndef{\chern}{\ensuremath{\mathrm{ch}}}
\ndef{\hilb}{\clH}                     
\ndef{\hilba}{\clH^{(a)}}                    
\ndef{\hilbs}{\clH^{(s)}}                    
   \ndef{\hilbasargument}{(\hilb)} 
\ndef{\LpH}[1]{\clL_{#1}\hilbasargument}       
\ndef{\saLpH}[1]{\clL_{sa}^{#1}\hilbasargument}       
\ndef{\clBH}{\clB\hilbasargument}              
\ndef{\ubBH}{\clB_1\hilbasargument}            
\ndef{\clCH}{\clC\hilbasargument}              
\ndef{\clKH}{\clK\hilbasargument}              
\ndef{\clFH}{\clF\hilbasargument}              
\ndef{\clUH}{\clU\hilbasargument}              
\ndef{\clCFH}{{\clC\clF}\hilbasargument}       
\ndef{\saBH}{\clB_{sa}\hilbasargument}         
\ndef{\saCH}{\clC_{sa}\hilbasargument}         
\ndef{\saFH}{\clF_{sa}\hilbasargument}         
\ndef{\saKH}{\clK_{sa}\hilbasargument}         
\ndef{\saCFH}{\clC\clF_{sa}\hilbasargument}    
\ndef{\clUFH}{\clU\clF\hilbasargument}         
\ndef{\Uinj}{\clU_{inj}\hilbasargument}        
\ndef{\UFinj}{\clU\clF_{inj}\hilbasargument}   
\ndef{\spproj}[2]{E^{#1}_{#2}}                      
\ndef{\spprojb}[2]{E^{#2}_{#1}}                     
\ndef{\LpN}[1]{\clL^{#1}(\clN,\tau)}     
\ndef{\saLpN}[1]{\clL^{#1}_{sa}(\clN,\tau)} 
\ndef{\rLpN}[1]{L^{#1}(\clN,\tau)}       
\ndef{\clAND}{(\clA,\clN,D)}             
\ndef{\clBA}{{\clB(\clA)}}
\ndef{\saKN}{{\clK_{sa}(\clN,\tau)}}          
\ndef{\clKN}{{\clK(\clN,\tau)}}          
\ndef{\clKtN}{{\clK(\tilde\clN,\tau)}}   
\ndef{\clFN}{{\clF(\clN,\tau)}}          
\ndef{\saFN}{{\clF_{sa}(\clN,\tau)}}     
\ndef{\clPN}{\clP(\clN)}                 
\ndef{\clQN}{\clQ(\clN,\tau)}            
\ndef{\infPN}{{\clP_\tau^\infty(\clN)}}  
\ndef{\clOF}[2]{\clF_{#1\mbox{-}#2}(\clN,\tau)}         
\ndef{\oind}[2]{{\rm \tau\mbox{-}ind}_{#1\mbox{-}#2}}   
\ndef{\tind}{\tau\mbox{-}\ind}                  
\ndef{\DInd}{\ind_{\clD,\tau}}           
\ndef{\BF}{Breuer-Fredholm}              
\ndef{\skewfred}[2]{$(#1\cdot #2)$ $\tau$\tire Fredholm}   
\ndef{\affl}{\eta}                       
\ndef{\vNa}{von Neumann algebra}         
\ndef{\nsf}{faithful normal semifinite } 
\ndef{\taubrs}[1]{\tau\brackets{#1}}     
\ndef{\sqbrs}[1]{[#1]}        
\ndef{\Sqbrs}[1]{\big[#1\big]}        
\ndef{\SqBrs}[1]{\Big[#1\Big]}        
\ndef{\domd}{\bigcap\limits_{n\ge 0} \dom\;\delta^n}         
\ndef{\DiffOP}{{\rm \clD}}
\ndef{\ADA}{\clA \cup [\clD,\clA]}
\ndef{\DixIdeal}[1]{\LpH{#1,\infty}}               
\ndef{\dixideal}{\ell^{1,\infty}}                  
\ndef{\WDixIdeal}{\LpH{1,\mathrm w}}               
\ndef{\DixIdealPos}[1]{\DixIdeal{#1}_+}            
\ndef{\DixIdealN}[1]{\LpN{#1,\infty}}              
\ndef{\DixIdealNPar}[2]{\clL^{#1,\infty}_{#2}(\clN,\tau)}    
\ndef{\DixIdealNPos}[1]{\LpN{#1,\infty}_+}                   
\ndef{\TrD}{\Tr_\omega}                                      
\ndef{\tauD}{{\tau_\omega}}                                  
\ndef{\ILogN}{\frac 1{\log(1+N)}}
\ndef{\DixNorm}[1]{\norm{#1}_{(1,\infty)}}                   
\ndef{\DixInt}[1]{\ints 0^t \mu_s(#1)\,ds}
\ndef{\DixIntL}[1]{\ints 0^{\lambda_{1/t}(#1)}\mu_s(#1)\,ds}
    \ndef{\SmallIdeal}{{\clL^{1, \mathrm w}}}
    \ndef{\SmallIdealMeas}{{\clL^{1, \mathrm w}_m}}
    \ndef{\DixIntII}[1]{\int_0^t \mu_s(#1)\,ds}
    \ndef{\DixIntf}[1]{\Phi_t(#1)}
    \ndef{\DixIntg}[1]{\Psi_t(#1)}
\ndef{\lpi}{\clL^{1,\pi}(\clN,\tau)}
\ndef{\strl}[1]{\stackrel \longrightarrow {#1}}
\ndef{\IIinfty}{$\mathrm{II}_\infty$\ }
\ndef{\fourier}[1]{\clF(#1)}          
\ndef{\HaarMeasBohrs}{\nu}            
\ndef{\BrownsMeas}{\mu}               
\ndef{\BohrCont}[1]{\tilde{#1}}       
\ndef{\APMean}{{M}}                   
\ndef{\CDSS}{{\clA_B}}                
\ndef{\matr}{{\rm Mat}}               
\ndef{\seque}[1]{\ensuremath{\{#1_n\}_{n=1}^\infty}}    
\ndef{\sequen}[2]{\ensuremath{\{#1_#2\}_{#2=1}^\infty}}    
\ndef{\Seque}[1]{\ensuremath{\left(#1_0,#1_1,#1_2,\dots\right)}}    
\ndef{\Cesaro}{H}                           
\ndef{\CesaroRPlus}{M}                      
\ndef{\Dilation}{D}                         
\ndef{\Shift}{T}                            
\ndef{\norm}[1]{\left\Vert#1\right\Vert}    
\ndef{\TrNorm}[1]{\norm{#1}_1}              
\ndef{\HSNorm}[1]{\norm{#1}_2}              
\ndef{\InftyNorm}[1]{\norm{#1}_\infty}      
\ndef{\normQN}[1]{\norm{#1}_{\clQN}}        
\ndef{\clLpnorm}[2]{\norm{#2}_{\clL^{#1}}}    
\ndef{\clLnorm}[1]{\clLpnorm{1}{#1}}    
\ndef{\ccurve}{\gamma}                      
\ndef{\abs}[1]{\left\lvert#1\right\rvert}   
\ndef{\set}[1]{\left\{#1\right\}}           
\ndef{\brackets}[1]{\left(#1\right)}        
\ndef{\brs}[1]{\brackets{#1}}               
\ndef{\Brs}[1]{\big(#1\big)}                
\ndef{\BRS}[1]{\Big(#1\Big)}                
\ndef{\scal}[2]{\left\la #1,#2\right\ra}               
\ndef{\precprec}{\prec\!\!\!\prec}
\ndef{\qeq}{\stackrel?=}
\ndef{\spectrum}[1]{\sigma_{#1}} 
\ndef{\spectruma}[1]{\sigma^{(a)}_{#1}} 
\ndef{\numrange}[1]{\mathrm{W}(#1)}                         
\rndef{\emptyset}{\varnothing}                              
\ndef{\csupp}{c}                           
\ndef{\closure}[1]{\overline{#1}}
\ndef{\linspan}[1]{\mathrm{span}\ {#1}}
\ndef{\bddborel}[1]{B(#1)}                 
\ndef{\charfunc}{\chi}
\ndef{\FrDer}{\euD}                        
\ndef{\LieDer}[1]{\pounds_{#1}\,}          
\ndef{\dds}{\left.\frac d{ds} \right|_{s = 0}}
\ndef{\ortcmp}[1]{#1^{\scriptscriptstyle \perp}}            
\ndef{\Laplace}{\Delta}                    
\ndef{\matrPQ}[3]
{
    \left(
      \begin{array}{cc}
        #1_{11} & #1_{12} \\
        #1_{21} & #1_{22}
      \end{array}
    \right)_{[#2,#3]}
}
\ndef{\margOK}{\marginpar{\bf \small OK}}
\newcounter{margcomcount}
\ndef{\margcom}[1]{\marginpar{\bf \small #1} \addtocounter{margcomcount}{1}
   \index{\indexcom{{\bf COMMENT: #1}}}}
\newcounter{margproof}
\ndef{\margproof}{\marginpar{\bf \small PROOF} \addtocounter{margproof}{1}
  \index{**** \indexcom{{\bf PROOF}}}}
\newcounter{margdetails}
\ndef{\margdetails}{\marginpar{\bf Details} \addtocounter{margdetails}{1}
  \index{**** \indexcom{{\bf DETAILS}}}}
\newcounter{margproofb}
\ndef{\margproofb}[1]{\marginpar{\bf \small Proof(B) #1} \addtocounter{margproofb}{1}
  \index{**** \indexcom{{\bf PROOF(B): #1}}}}
\newcounter{margdetailsb}
\ndef{\margdetailsb}[1]{\marginpar{\bf \small Details(B)} \addtocounter{margdetailsb}{1}
  \index{**** \indexcom{{\bf DETAILS(B): \\ #1}}}}
\newcounter{margdetailsc}
\ndef{\margdetailsc}[1]{\marginpar{\bf \small Details(C)} \addtocounter{margdetailsc}{1}
  \index{**** \indexcom{{\bf DETAILS(C): \\ #1}}}}
\newcounter{margcomcountb}
\ndef{\margcomb}[1]{\marginpar{\bf \small #1} \addtocounter{margcomcountb}{1}
   \index{\indexcom{{\bf COMMENT(B): \\ #1}}}}
\ndef{\mytimes}{\!\times\!}
\ndef{\sss}[1]{\subsubsection{}\label{#1}}
\rndef{\phi}{\varphi}
\ndef{\OpenUnitDisk}{D}
\ndef{\RHS}{RHS}                            
\ndef{\LHS}{LHS} 
\ndef{\ttt}{\Leftrightarrow}
\ndef{\then}{\Rightarrow}
\ndef{\tto}{\longrightarrow}
\ndef{\nno}{\nonumber\\}
\ndef{\newn}[1]{\index{#1} {\bfseries #1}}       
\ndef{\la}{\langle}
\ndef{\ra}{\rangle} \ndef{\dbar}{{\;\bar{\phantom{o}} \!\!\!\! d}}
\ndef{\stl}[1]{\stackrel{\vbox to 0pt{\vss\hbox{$\scriptstyle #1$}}}}
\ndef{\mathcomment}[1]{{\hfill \qquad\qquad\qquad\text{by (#1)}}}        
\ndef{\mathcomm}[1]{{\hfill \qquad\qquad\qquad\qquad\qquad\text{#1}}}        
\ndef{\details}[1]{\smallskip\begin{center} {\bf Here:}
#1\end{center}\medskip} \ndef{\indexcom}[1]{ --- #1}
\ndef{\longsim}{\ \sim \ }              
\ndef{\tire}{-}              
\ndef{\intinfinf}{\int_{-\infty}^\infty}
\ndef{\refnsftrace}{\cite[V.\,2.\,1]{TakI}} 
\ndef{\refaffloper}{\cite[IV.\,5, Exercise 3]{TakI}} 
\ndef{\refsemifinvNa}{\cite[V.\,1.\,21]{TakI}} 
\ndef{\reftaumeasurable}{\cite[Definition 1.2]{FK86PJM}} 
\ndef{\reftautraceclassaffl}{\cite[V.2, p.\,320]{TakI}} 
\ndef{\refinvoperideal}{\cite[Appendix A.2]{CP2}} 
\ndef{\reftautracenorm}{\cite[V.2, p.\,320]{TakI}} 
\ndef{\reftaucompact}{\cite{}} 
\ndef{\reftauFredholm}{\cite[Appendix B]{PR94JFA}} 
     \ndef{\npartial}{\slash\!\!\!\partial}
     \ndef{\Heis}{\operatorname{Heis}}
     \ndef{\Solv}{\operatorname{Solv}}
     \ndef{\Spin}{\operatorname{Spin}}
     \ndef{\SO}{\operatorname{SO}}
     \ndef{\Index}{\operatorname{index}}
             \ndef{\p}{\partial}
             \ndef{\dd}{|\clD|}
             \ndef{\n}{\parallel}
     \ndef{\gf}[2]{\genfrac{}{}{0pt}{}{#1}{#2}}
     \ndef{\ta}{\widetilde{\alpha}}
     \ndef{\tb}{\widetilde{\beta}}
     \ndef{\txi}{\widetilde{\xi}}
     \ndef{\tk}{\widetilde{K}}
     \ndef{\CGh}{\widetilde{\CG}}
     \ndef{\boe}{{\bf e}}\ndef{\bt}{{\bf t}}
     \ndef{\vth}{\vartheta}
     \ndef{\db}{\overline{\partial}}
     \ndef{\hV}{\hat{V}}
     \ndef{\cag}{{\clA^\Gamma}}
     \ndef{\sind}{\sigma{\rm -ind}}
\newcounter{slidecount}
\newcommand{\slide}[2]{
  \newpage
  \addtocounter{slidecount}{1}
  \renewcommand{\@oddhead}{{\small Advanced Mathematics 1A -- 2009 \hfil Slide #1-\arabic{slidecount}}}
  \begin{center} \bf #2 \end{center}
}
\let\LatexCite=\cite  
\let\ifnumref\iffalse 
\ndef{\ifuncited}[4]{\expandafter\ifx\csname used#4\endcsname\relax}
\ndef{\ifcited}[4]{\expandafter\ifx\csname used#4\endcsname\relax\else}
  \ndef{\papertitle}[1]{ \emph{#1}, }
  \ndef{\paperauthor}[2]{#2}  
  \ndef{\pbbi}[9]{%
      \ifcited{#1}{#2}{#3}{#5}%
        \ifnumref%
          \bibitem{#5}\paperauthor{#1}{#6},\papertitle{#7}#8.%
        \else%
          \advance #9 by 1%
          \ifnum#9<1%
            \bibitem[#4]{#5}\paperauthor{#1}{#6}, \papertitle{#7}#8.%
          \else%
            \bibitem[#4$_{\the#9}$]{#5}\paperauthor{#1}{#6},\papertitle{#7}#8.%
          \fi%
        \fi%
      \fi%
  }
  \ndef{\mbbi}[8]{%
     \ifcited{#1}{#2}{#3}{#5}%
        \ifnumref%
          \bibitem{#5}\paperauthor{#1}{#6},\papertitle{#7}#8.%
        \else%
          \bibitem[#4]{#5}\paperauthor{#1}{#6},\papertitle{#7}#8.%
        \fi%
     \fi%
  }
\ndef{\AddCite}[1]{%
   \ifuncited{0}{0}{0}{#1}%
     \expandafter\gdef\csname used#1\endcsname {}%
   \fi%
}
\def\ProcessCite#1,{%
     \ifx\relax#1%
         \let\next=\relax%
     \else%
         \AddCite{#1}%
         \let\next=\ProcessCite%
     \fi%
     \next%
}
\ndef{\AddCites}[1]{\ProcessCite#1,\relax,}
\ndef{\CiteWithoutExtension}[1]{%
   \AddCites{#1}%
   \LatexCite{#1}%
}
\def\CiteWithExtension[#1]#2{%
   \AddCites{#2}%
   \LatexCite[#1]{#2}%
}
\ndef{\CleverCite}{%
    \ifx\NChar[ %
       \let\MyCite=\CiteWithExtension %
    \else %
       \let\MyCite=\CiteWithoutExtension %
    \fi %
    \MyCite%
}
\renewcommand{\cite}{\futurelet\NChar\CleverCite}
      \ndef{\volume}[1]{{\bf #1}}
      \ndef{\VolYearPP}[3]{\ifnum#2=0 (to appear)\else\volume{#1} (#2), #3\fi}
      \ndef{\VolNoYearPP}[4]{\ifnum#3=0 (to appear)\else\volume{#1} #2 (#3), #4\fi}
      \ndef{\libcode}[1]{}
\ndef{\jnActaMath}[3]{Acta Math. \VolYearPP{#1}{#2}{#3}}                       
\ndef{\jnAdvMath}[3]{Adv. in~Math. \VolYearPP{#1}{#2}{#3}}                     
\ndef{\jnAlgAnal}[3]{Algebra i~Analiz \VolYearPP{#1}{#2}{#3}}
\ndef{\jnAmerJMath}[3]{Amer. J. Math. \VolYearPP{#1}{#2}{#3}}                  
\ndef{\jnAmerMathMonth}[3]{Amer. Math. Monthly \VolYearPP{#1}{#2}{#3}}         
\ndef{\jnAnnMath}[4]{Ann. of~Math. \VolNoYearPP{#1}{#2}{#3}{#4}}               
\ndef{\jnAnalMath}[3]{J. Anal. Math. \VolYearPP{#1}{#2}{#3}}                   
\ndef{\jnBullLondMathSoc}[3]{Bull. London Math. Soc. \VolYearPP{#1}{#2}{#3}}   
\ndef{\jnBullAMS}[3]{Bull. Amer. Math. Soc. \VolYearPP{#1}{#2}{#3}}   
\ndef{\jnCanMathBull}[3]{Canad. Math. Bull. \VolYearPP{#1}{#2}{#3}}            
\ndef{\jnCanMath}[3]{Canad. J.~Math. \VolYearPP{#1}{#2}{#3}}             
\ndef{\jnCommMathPhys}[3]{Comm. Math. Phys. \VolYearPP{#1}{#2}{#3}}             
\ndef{\jnCommPDE}[3]{Comm. Partial Differential Equations \VolYearPP{#1}{#2}{#3}}             
\ndef{\jnComptRendue}[3]{C.\,R.~Acad. Sci. Paris S\'er. A-B \VolYearPP{#1}{#2}{#3}}      
\ndef{\jnContMath}[3]{Contemporary Math. \VolYearPP{#1}{#2}{#3}}               %
\ndef{\jnDukeMJ}[3]{Duke Math. J. \VolYearPP{#1}{#2}{#3}}
\ndef{\jnDiffGeom}[3]{J.~Diff. Geom. \VolYearPP{#1}{#2}{#3}}                   
\ndef{\jnErgodicTheory}[3]{Ergodic Theory and Dynamical Systems \VolYearPP{#1}{#2}{#3}} 
\ndef{\jnFuncAnal}[3]{J.~Functional Analysis \VolYearPP{#1}{#2}{#3}}           
\ndef{\jnFunkAnalPril}[4]{Funct. Anal. Appl. \VolNoYearPP{#1}{#2}{#3}{#4}}  
\ndef{\jnGAFA}[3]{GAFA \VolYearPP{#1}{#2}{#3}}                                 
\ndef{\jnIHES}[3]{IHES Publ. Math. (Paris) \VolYearPP{#1}{#2}{#3}}             
\ndef{\jnIEOT}[3]{Integral Equations Operator Theory   \VolYearPP{#1}{#2}{#3}} 
\ndef{\jnIsrMath}[3]{Israel J.~Math. \VolYearPP{#1}{#2}{#3}}                   
\ndef{\jnKTheory}[3]{K-Theory \VolYearPP{#1}{#2}{#3}}                          
\ndef{\jnLetMathPhys}[3]{Lett. Math. Phys. \VolYearPP{#1}{#2}{#3}}             
\ndef{\jnMathAnn}[3]{Math. Ann. \VolYearPP{#1}{#2}{#3}}                        
\ndef{\jnMathAnalAppl}[3]{J.~Math. Anal. and Appl. \VolYearPP{#1}{#2}{#3}}     
\ndef{\jnMathNachr}[3]{Math. Nachr. \VolYearPP{#1}{#2}{#3}}
\ndef{\jnMathPhys}[3]{J. Math. Phys. \VolYearPP{#1}{#2}{#3}}
\ndef{\jnMathSocJap}[3]{J. Math. Soc. Japan \VolYearPP{#1}{#2}{#3}}
\ndef{\jnOperTheory}[3]{J.~Operator Theory \VolYearPP{#1}{#2}{#3}}             
\ndef{\jnPacJMath}[3]{Pacific J.~Math. \VolYearPP{#1}{#2}{#3}}                  
\ndef{\jnPositivity}[3]{Positivity \VolYearPP{#1}{#2}{#3}}
\ndef{\jnProcAmerMS}[3]{Proc. Amer. Math. Soc. \VolYearPP{#1}{#2}{#3}}         
\ndef{\jnProcCambPhilSoc}[3]{Math. Proc. Camb. Phil. Soc. \VolYearPP{#1}{#2}{#3}}
\ndef{\jnReineAngew}[3]{J.~Reine Angew. Math. \VolYearPP{#1}{#2}{#3}}          
\ndef{\jnTokyoMath}[3]{Tokyo J.~Math. \VolYearPP{#1}{#2}{#3}}
\ndef{\jnTopology}[3]{Topology \VolYearPP{#1}{#2}{#3}}
\ndef{\jnTransAmerMathSoc}[3]{Trans. Amer. Math. Soc. \VolYearPP{#1}{#2}{#3}}
\ndef{\jnIzvANSSSR}[3]{Izv. Akad. Nauk SSSR, Ser. Mat. \VolYearPP{#1}{#2}{#3}}
\ndef{\jnIzvVyshUchZav}[3]{Izv. Vyssh. Uch. Zav., Mat. \VolYearPP{#1}{#2}{#3} (Russian)}
\ndef{\jnIzdatLenUniv}[2]{Izdat. Leningrad. Univ., Leningrad, (#1), #2 (Russian)}
\ndef{\jnFieldsInsComm}[3]{Fields Inst. Comm. \VolYearPP{#1}{#2}{#3}}
\ndef{\jnDoklANSSSR}[3]{Dokl. Akad. Nauk SSSR \VolYearPP{#1}{#2}{#3}}
\ndef{\jnMatZametki}[3]{Matem. zametki \VolYearPP{#1}{#2}{#3}}
\ndef{\jnRussMathSurvey}[3]{Russian Math. Surveys \VolYearPP{#1}{#2}{#3}}
\ndef{\jnSibMathJ}[3]{Sib. Math.~J. \VolYearPP{#1}{#2}{#3}}
\ndef{\jnSovMath}[3]{J.~Soviet math. \VolYearPP{#1}{#2}{#3}}
\ndef{\jnTransMoscMathSoc}[3]{Trans. Moscow Math. Soc. \VolYearPP{#1}{#2}{#3}}
\ndef{\jnUMN}[3]{Uspekhi Mat. Nauk \VolYearPP{#1}{#2}{#3}}
\ndef{\bkTransMathMon}[2]{Trans. Math. Monographs, AMS, \volume{#1}, #2}
\ndef{\pbBirkhauser}[1]{Birkh\"auser, Boston, #1}
\ndef{\pbFactorial}[1]{Moscow, Factorial, #1}
\ndef{\pbGauthier}[1]{Gauthier-Villars, Paris, #1}
\ndef{\pbNauka}[1]{Moscow, Nauka, #1 (Russian)}
\ndef{\pbNaukaR}[1]{Москва, Наука, #1}
\ndef{\pbPrinceton}[1]{Princeton University Press, Princeton, New Jersey, #1}
\ndef{\pbPublPerish}[1]{Publish or Perish Inc., Berkeley, #1}
\ndef{\pbSpringer}[1]{Springer-Verlag, #1}
\ndef{\myauthor}[1]{\mbox{#1}}
\ndef{\Agmon}{\myauthor{Sh.\,Agmon}}
\ndef{\Ahiezer}{\myauthor{N.\,I.\,Ahiezer}}
\ndef{\Arazy}{\myauthor{J.\,Arazy}}
\ndef{\Aronszajn}{\myauthor{N.\,Aronszajn}}
\ndef{\Astashkin}{\myauthor{S.\,V.\,Astashkin}}
\ndef{\Atiyah}{\myauthor{M.\,Atiyah}}
\ndef{\Avron}{\myauthor{J.\,E.\,Avron}}
\ndef{\Azamov}{\myauthor{N.\,A.\,Azamov}}
\ndef{\Banach}{\myauthor{S.\,Banach}}
\ndef{\Benameur}{\myauthor{M-T.\,Benameur}}
\ndef{\Bennett}{\myauthor{C.\,Bennett}}
\ndef{\Berezin}{\myauthor{F.\,A.\,Berezin}}
\ndef{\Berline}{\myauthor{N.\,Berline}}
\ndef{\Birman}{\myauthor{M.\,Sh.\,Birman}}
\ndef{\Blackadar}{\myauthor{B.\,Blackadar}}
\ndef{\Bogolyubov}{\myauthor{N.\,N.\,Bogolyubov}}
\ndef{\Bonsall}{\myauthor{F.\,F.\,Bonsall}}
\ndef{\Bony}{\myauthor{J.\,F.\,Bony}}
\ndef{\BoosBavnbek}{\myauthor{B.\,Boo$\beta$-Bavnbek}}
\ndef{\Bott}{\myauthor{R.\,Bott}}
\ndef{\Branges}{\myauthor{L.\,de Branges}}
\ndef{\Bratteli}{\myauthor{O.\,Bratteli}}
\ndef{\Bredon}{\myauthor{G.\,E.\,Bredon}}
\ndef{\Breuer}{\myauthor{M.\,Breuer}}
\ndef{\Brown}{\myauthor{L.\,G.\,Brown}}
\ndef{\Bruneau}{\myauthor{V.\,Bruneau}}
\ndef{\Buslaev}{\myauthor{V.\,S.\,Buslaev}}
\ndef{\Carey}{\myauthor{A.\,L.\,Carey}}
\ndef{\CareyRW}{\myauthor{R.\,W.\,Carey}} 
\ndef{\Cartan}{\myauthor{H.\,Cartan}}
\ndef{\Chilin}{\myauthor{V.\,I.\,Chilin}}
\ndef{\Coburn}{\myauthor{L.\,A.\,Coburn}}
\ndef{\Connes}{\myauthor{A.\,Connes}}
\ndef{\Cornfeld}{\myauthor{I.\,P.\,Cornfeld}}
\ndef{\Daletskii}{\myauthor{Yu.\,L.\,Daletski\u\i}}   
\ndef{\Dixmier}{\myauthor{J.\,Dixmier}}
\ndef{\DoddsPG}{\myauthor{P.\,G.\,Dodds}}
\ndef{\DoddsTK}{\myauthor{T.\,K.\,Dodds}}
\ndef{\Douglas}{\myauthor{R.\,G.\,Douglas}}
\ndef{\Dubrovin}{\myauthor{B.\,A.\,Dubrovin}}
\ndef{\Dugundji}{\myauthor{J.\,Dugundji}}
\ndef{\Duncan}{\myauthor{J.\,Duncan}}
\ndef{\Dunford}{\myauthor{N.\,Dunford}}
\ndef{\Dykema}{\myauthor{K.\,J.\,Dykema}}
\ndef{\Edwards}{\myauthor{R.\,E.\,Edwards}}
\ndef{\Eilenberg}{\myauthor{S.\,Eilenberg}}
\ndef{\Entina}{\myauthor{S.\,B.\,\`Entina}}
\ndef{\Fack}{\myauthor{T.\,Fack}} 
\ndef{\Faddeev}{\myauthor{L.\,D.\,Faddeev}}
\ndef{\Farber}{\myauthor{M.\,Farber}}
\ndef{\Farforovskaya}{\myauthor{Yu.\,B.\,Farforovskaya}}
\ndef{\Federer}{\myauthor{H.\,Federer}}
\ndef{\Fedosov}{\myauthor{B.\,V.\,Fedosov}}
\ndef{\Figiel}{\myauthor{T.\,Figiel}} 
\ndef{\Figueroa}{\myauthor{H.\,Figueroa}}
\ndef{\Fillmore}{\myauthor{P.\,A.\,Fillmore}}
\ndef{\Fomenko}{\myauthor{A.\,T.\,Fomenko}} 
\ndef{\Fomin}{\myauthor{S.\,V.\,Fomin}}
\ndef{\Frohlich}{\myauthor{J.\,Fr\"ohlich}}
\ndef{\Fuglede}{\myauthor{B.\,Fuglede}}
\ndef{\Furutani}{\myauthor{K.\,Furutani}}
\ndef{\Gelfand}{\myauthor{I.\,M.\,Gelfand}}
\ndef{\Gesztesy}{\myauthor{F.\,Gesztesy}}     
\ndef{\Getzler}{\myauthor{E.\,Getzler}} 
\ndef{\Gilkey}{\myauthor{P.\,B.\,Gilkey}}
\ndef{\Gitler}{\myauthor{S.\,Gitler}}
\ndef{\Glazman}{\myauthor{I.\,M.\,Glazman}}
\ndef{\Glimm}{\myauthor{J.\,Glimm}}
\ndef{\Gohberg}{\myauthor{I.\,C.\,Gohberg}}
\ndef{\Goldshtein}{\myauthor{Ya.\,Goldshtein}}
\ndef{\Golze}{\myauthor{F.\,Golze}}
\ndef{\GraciaBondia}{\myauthor{J.\,M.\,Gracia-Bond\'{i}a}}
\ndef{\Greenleaf}{\myauthor{F.\,P.\,Greenleaf}}
\ndef{\Gromov}{\myauthor{M.\,Gromov}}
\ndef{\Gunning}{\myauthor{R.\,C.\,Gunning}}
\ndef{\Haagerup}{\myauthor{U.\,Haagerup}}
\ndef{\Haag}{\myauthor{R.\,Haag}}
\ndef{\Halmos}{\myauthor{Halmos}}
\ndef{\Hardy}{\myauthor{G.\,H.\,Hardy}}
\ndef{\Herbst}{\myauthor{I.\,W.\,Herbst}}
\ndef{\Higson}{\myauthor{N.\,Higson}}  
\ndef{\Hoermander}{\myauthor{L.\,Hoermander}} 
\ndef{\Hoffman}{\myauthor{K.\,Hoffman}} 
\ndef{\Ito}{\myauthor{K.\,Ito}}
\ndef{\Jaffe}{\myauthor{A.\,Jaffe}}
\ndef{\James}{\myauthor{I.\,M.\,James}}
\ndef{\Javrjan}{\myauthor{V.\,A.\,Javrjan}}
\ndef{\Jitomirskaya}{\myauthor{S.\,Jitomirskaya}}
\ndef{\Kadison}{\myauthor{R.\,V.\,Kadison}}
\ndef{\Kalton}{\myauthor{N.\,J.\,Kalton}} 
\ndef{\Kato}{\myauthor{T.\,Kato}} 
\ndef{\Kobayashi}{\myauthor{S.\,Kobayashi}}
\ndef{\Koplienko}{\myauthor{L.\,S.\,Koplienko}}
\ndef{\Korotyaev}{\myauthor{E.\,Korotyaev}}
\ndef{\Kosaki}{\myauthor{H.\,Kosaki}}
\ndef{\Kostrykin}{\myauthor{V.\,Kostrykin}}
\ndef{\Kotani}{\myauthor{S.\,Kotani}}
\ndef{\Krein}{\myauthor{Kre\u\i n}}
\ndef{\KreinMG}{\myauthor{M.\,G.\,Kre\u\i n}}
\ndef{\KreinSG}{\myauthor{S.\,G.\,Kre\u\i n}}
\ndef{\Kuroda}{\myauthor{S.\,T.\,Kuroda}}
\ndef{\Leichtnam}{\myauthor{E.\,Leichtnam}}
\ndef{\Lesch}{\myauthor{M.\,Lesch}}
\ndef{\Lesniewski}{\myauthor{A.\,Lesniewski}}
\ndef{\Levitan}{\myauthor{B.\,M.\,Levitan}}
\ndef{\Lidskii}{\myauthor{V.\,B.\,Lidskii}}
\ndef{\Lifshits}{\myauthor{I.\,M.\,Lifshits}}
\ndef{\Lindenstrauss}{\myauthor{J.\,Lindenstrauss}}
\ndef{\Loday}{\myauthor{J.-L.\,Loday}}
\ndef{\Lord}{\myauthor{S.\,Lord}}      
\ndef{\Lorentz}{\myauthor{G.\,Lorentz}}
\ndef{\Magnus}{\myauthor{W.\,Magnus}}
\ndef{\Makarov}{\myauthor{K.\,A.\,Makarov}}
\ndef{\MakarovN}{\myauthor{N.\,Makarov}}
\ndef{\Mathai}{\myauthor{V.\,Mathai}}         
\ndef{\McKean}{\myauthor{H.\,P.\,McKean}}
\ndef{\Mishchenko}{\myauthor{A.\,S.\,Mishchenko}}
\ndef{\Molchanov}{\myauthor{S.\,A.\,Molchanov}}
\ndef{\Moore}{\myauthor{C.\,C.\,Moore}}
\ndef{\Moscovici}{\myauthor{H.\,Moscovici}}  
\ndef{\Motovilov}{\myauthor{A.\,K.\,Motovilov}}
\ndef{\Moyer}{\myauthor{R.\,D.\,Moyer}}
\ndef{\Naboko}{\myauthor{S.\,N.\,Naboko}}
\ndef{\Narasimhan}{\myauthor{R.\,Narasimhan}}
\ndef{\Nomizu}{\myauthor{K.\,Nomizu}}
\ndef{\Novikov}{\myauthor{S.\,P.\,Novikov}}
\ndef{\Osterwalder}{\myauthor{K.\,Osterwalder}}
\ndef{\Patodi}{\myauthor{V.\,Patodi}}
\ndef{\Pagter}{\myauthor{B.\,de~Pagter}}  
\ndef{\Pastur}{\myauthor{L.\,A.\,Pastur}}  
\ndef{\Pavlov}{\myauthor{B.\,S.\,Pavlov}}
\ndef{\Pedersen}{\myauthor{G.\,K.\,Pedersen}}
\ndef{\Peller}{\myauthor{V.\,V.\,Peller}}
\ndef{\Perera}{\myauthor{V.\,S.\,Perera}}
\ndef{\Petunin}{\myauthor{Ju.\,I.\,Petunin}}
\ndef{\Phillips}{\myauthor{J.\,Phillips}}  
\ndef{\Piazza}{\myauthor{P.\,Piazza}}   
\ndef{\Pincus}{\myauthor{J.\,D.\,Pincus}}   
\ndef{\Poincare}{Poincar\'e}
\ndef{\Postnikov}{\myauthor{M.\,M.\,Postnikov}} 
\ndef{\Prinzis}{\myauthor{R.\,Prinzis}}
\ndef{\Privalov}{\myauthor{I.\,I.\,Privalov}}
\ndef{\Pushnitski}{\myauthor{A.\,B.\,Pushnitski}} 
\ndef{\Raeburn}{\myauthor{I.\,Raeburn}}
\ndef{\Raikov}{\myauthor{G.\,Raikov}}
\ndef{\Reed}{\myauthor{M.\,Reed}}
\ndef{\Rennie}{\myauthor{A.\,Rennie}}
\ndef{\Rickart}{\myauthor{C.\,E.\,Rickart}}
\ndef{\Riesz}{\myauthor{F.\,Riesz}}
\ndef{\Ringrose}{\myauthor{J.\,Ringrose}}
\ndef{\Rio}{\myauthor{R.\,del Rio}}
\ndef{\Robinson}{\myauthor{D.\,Robinson}}
\ndef{\Rossi}{\myauthor{H.\,Rossi}}
\ndef{\Rudin}{\myauthor{W.\,Rudin}}
\ndef{\Ruelle}{\myauthor{D.\,Ruelle}}
\ndef{\Ruzhansky}{\myauthor{M.\,Ruzhansky}}
\ndef{\Sakai}{\myauthor{Sh.\,Sakai}}
\ndef{\Sargsjan}{\myauthor{I.\,S.\,Sargsjan}}
\ndef{\Sato}{\myauthor{H.\,Sato}}
\ndef{\Schaeffer}{\myauthor{D.\,G.\,Schaeffer}}
\ndef{\Schluchtermann}{\myauthor{G.\,Schluchtermann}}
\ndef{\Schochet}{\myauthor{C.\,Schochet}}
\ndef{\SchroedingerE}{\myauthor{E.\,Schr\"odinger}}
\ndef{\Schroedinger}{\myauthor{Schr\"odinger}}
\ndef{\Schrohe}{\myauthor{E.\,Schrohe}}
\ndef{\Schwartz}{\myauthor{J.\,T.\,Schwartz}}
\ndef{\Sedaev}{\myauthor{A.\,A.\,Sedaev}}
\ndef{\Seiler}{\myauthor{R.\,Seiler}}
\ndef{\Semenov}{\myauthor{E.\,M.\,Semenov}}
\ndef{\Shabat}{\myauthor{B.\,V.\,Shabat}}
\ndef{\Shafarevich}{\myauthor{I.\,R.\,Shafarevich}}
\ndef{\Sharpley}{\myauthor{R.\,Sharpley}}
\ndef{\Shilov}{\myauthor{G.\,E.\,Shilov}}
\ndef{\Shirkov}{\myauthor{D.\,V.\,Shirkov}}
\ndef{\Shubin}{\myauthor{M.\,A.\,Shubin}}
\ndef{\Silverman}{\myauthor{H.\,Silverman}}
\ndef{\Simon}{\myauthor{B.\,Simon}}
\ndef{\Sinai}{\myauthor{Ya.\,G.\,Sinai}}
\ndef{\Singer}{\myauthor{I.\,M.\,Singer}}
\ndef{\Solomyak}{\myauthor{M.\,Z.\,Solomyak}}
\ndef{\Soloviev}{\myauthor{Yu.\,P.\,Soloviev}}
\ndef{\Spivak}{\myauthor{M.\,Spivak}}
\ndef{\Stein}{\myauthor{E.\,M.\,Stein}}
\ndef{\Stenkin}{\myauthor{V.\,V.\,Sten'kin}}
\ndef{\Stratila}{\myauthor{S.\,Stratila}}
\ndef{\Sucheston}{\myauthor{L.\,Sucheston}}
\ndef{\Sukochev}{\myauthor{F.\,A.\,Sukochev}}
\ndef{\Switzer}{\myauthor{R.\,M.\,Switzer}}
\ndef{\SzNagy}{\myauthor{B.\,Sz.-Nagy}}
\ndef{\Takesaki}{\myauthor{M.\,Takesaki}}
\ndef{\Taylor}{\myauthor{M.\,E.\,Taylor}}
\ndef{\Treves}{\myauthor{F.\,Treves}}
\ndef{\Troitsky}{\myauthor{E.\,V.\,Troitsky}}
\ndef{\Tzafriri}{\myauthor{L.\,Tzafriri}}
\ndef{\Varilly}{\myauthor{J.\,C.\,V\'{a}rilly}}
\ndef{\Vergne}{\myauthor{M.\,Vergne}}
\ndef{\Vladimirov}{\myauthor{V.\,S.\,Vladimirov}}
\ndef{\Voiculescu}{\myauthor{D.\,Voiculescu}}
\ndef{\Weiss}{\myauthor{G.\,Weiss}}
\ndef{\Wells}{\myauthor{R.\,O.\,Wells}}
\ndef{\Williams}{\myauthor{J.\,P.\,Williams}}
\ndef{\Winkler}{\myauthor{S.\,Winkler}}
\ndef{\Witten}{\myauthor{E.\,Witten}}
\ndef{\Wodzicki}{\myauthor{M.\,Wodzicki}}
\ndef{\Wojciechowski}{\myauthor{K.\,P.\,Wojciechowski}}
\ndef{\Yafaev}{\myauthor{D.\,R.\,Yafaev}}
\ndef{\Yosida}{\myauthor{K.\,Yosida}}
\ndef{\Zsido}{\myauthor{L.\,Zsido}}
\ndef{\mbCz}{\mbC^{(z)}} \ndef{\mbCr}{\mbC^{(r)}} \ndef{\yy}{y}
\ndef{\ells}{{\ell_2}} \ndef{\hlambda}{{\mathfrak h_\lambda}}
\ndef{\hlambdao}{{\mathfrak h_\lambda^{(0)}}}
\ndef{\hlambdas}{{\mathfrak h_\lambda^{(s)}}}
\ndef{\hlambdar}{{\mathfrak h_\lambda^{(r)}}} \sloppy
\begin{document}
\title[Resonance index]{Resonance index and singular \\
  spectral shift function}
\author{Nurulla Azamov}
\address{School of Computer Science, Engineering and Mathematics
   \\ Flinders University
   \\ Bedford Park, 5042, SA Australia.}
\email{azam0001@csem.flinders.edu.au} \keywords{Spectral shift
function, singular spectral shift function, resonance index}
 \subjclass[2000]{ 
     Primary 47A55; 
     Secondary 47A11
}

\begin{abstract} This paper is a continuation of my previous work on absolutely continuous and singular spectral shift functions.
Let $H_0$ be a self-adjoint operator on a framed Hilbert space $(\hilb,F)$ and let $V$ be a self-adjoint trace-class operator.
The singular part $\xis(\lambda; H_0+rV,H_0)$ of the spectral shift function is a.e. integer-valued function,
where $r \in \mbR.$
For any fixed point~$\lambda$ from the set of regular points $\Lambda(H_0,F)$ associated with the pair $(H_0,F),$
the singular spectral shift function is a locally constant function of the coupling constant $r,$
with possible jumps only at resonance points. Main result of this paper asserts that the jump of the singular spectral shift function
at a resonance point $r_0$ is equal to the so-called resonance index of the triple $(\lambda; H_{r_0},V).$

Resonance index can be described as follows. For a fixed $\lambda \in \Lambda(H_0,F),$
the resonance points $r_0$ of the path $H_r$ are real poles of a certain meromorphic function
associated with the triple $(\lambda+i0; H_{0},V).$ When $\lambda+i0$ is shifted to $\lambda+iy$ with small $y>0,$ that pole get off the real axis in the coupling constant
complex plane and, in general, splits into some $N_+$ poles in the upper half-plane and some $N_-$ poles in the lower half-plane (counting multiplicities).
Resonance index of the triple $(\lambda; H_{r_0},V)$ is the difference $N_+-N_-.$

Based on the theorem just described, a non-trivial example of singular spectral shift function is given.
\end{abstract}

\maketitle

\section{Introduction and Preliminaries}
This paper is a continuation of my previous work \cite{Az3v5} and heavily relies on it.
I give here necessary preliminaries from \cite{Az3v5}, which are necessary to state the main result of this paper.

Let $\hilb$ be a (complex separable) Hilbert space.
We denote by
$$
  R_z(H) = (H-z)^{-1}
$$
the resolvent of an operator $H$ in $\hilb.$

Recall that a \emph{frame} $F$ in a Hilbert space $\hilb$ (see \cite[Definition 2.1.1]{Az3v5})
is a Hilbert-Schmidt operator $F \colon \hilb \to \clK$ with trivial kernel and co-kernel,
where $\clK$ is another Hilbert space. The condition that co-kernel of $F$ be trivial is not essential,
since one can always replace $\clK$ by the closure of the image of~$F.$ Let
$$
  F = \sum_{j=1}^\infty \kappa_j \scal{\phi_j}{\cdot}\psi_j
$$
be Schmidt representation of~$F.$ In fact, in the definition of the frame $F$ the important data is the orthonormal basis
$(\phi_j)_{j=1}^\infty$ with a fixed order and the $\ell_2$-sequence of positive decreasing numbers $(\kappa_j).$

Let $H_0$ be a self-adjoint operator in $\hilb$ with a fixed frame $F,$
let $$V =F^*JF,$$ where $J$ is a bounded self-adjoint operator on $\clK,$ and let $$H_r = H_0+rV, \ r \in \mbR.$$
Let
$$
  T_z(H_0) = F R_{z}(H_0)F^*.
$$
We define a set
\begin{equation} \label{F: def of Lambda}
  \Lambda(H_0,F)
\end{equation}
as the set of all those real numbers $\lambda \in \mbR,$
for which the limit
$$
  \lim_{y \to 0} F R_{\lambda+iy}(H_0)F^* =: F R_{\lambda+i0}(H_0)F^*
$$
exists in uniform norm and the limit
$$
  \lim_{y \to 0} F \Im R_{\lambda+iy}(H_0)F^* =: F \Im R_{\lambda+i0}(H_0)F^*
$$
exists in trace-class norm. It is well-known that the set $\Lambda(H_0,F)$ has full Lebesgue measure (see e.g. \cite[Theorems 6.1.5 and 6.1.9]{Ya}).
It also has the following coupling constant regularity property \cite[\S 4]{Az3v5}:
$$
  \text{if} \ \lambda \in \Lambda(H_0,F), \ \text{then} \ \lambda \in \Lambda(H_r,F) \ \text{for all} \ r \notin R(\lambda; \set{H_s},V),
$$
where $R(\lambda; \set{H_s},V)$ is a discrete subset of $\mbR$ given by (see \cite[Definition 4.1.6]{Az3v5})
$$
  R(\lambda; \set{H_s},V) = \set{r \in \mbR \colon 1+rT_{\lambda+i0}(H_0)J \ \text{is not invertible}}.
$$

For every $\lambda \in \Lambda(H_0,F)$ there exists the matrix
$$
  \phi(\lambda) = \frac 1\pi \brs{\kappa_j\kappa_k \scal{\phi_j}{\Im R_{\lambda+i0(H_0)}\phi_k}},
$$
which is a non-negative trace-class operator on $\ell_2$ (see \cite[Proposition 2.4.3]{Az3v5}).
Let $\hilb_\alpha(F), \ \alpha \in \mbR,$ be the scale of Hilbert spaces associated with operator $\abs{F}$ on $\hilb$
(see e.g. \cite[section 2.6]{Az3v5} for details).
An element $f$ of $\hilb_1(F)$ has the form
$$
  f = \sum_{j=1}^\infty \beta_j \kappa_j\phi_j,
$$
where $(\beta_j) \in \ell_2.$
The \emph{evaluation operator}
$$
  \euE_\lambda(H_0) = \euE_\lambda(H_0,F) \colon \hilb_1(F) \to \ell_2
$$ is defined by formula (see \cite[\S 3]{Az3v5})
$$
  \euE_\lambda f = \sum_{j=1}^\infty \beta_j \eta_j(\lambda),
$$
where $\eta_j(\lambda)$ is the $j$-th column of the Hilbert-Schmidt matrix $\eta(\lambda) = \sqrt {\phi(\lambda)}$ divided by~$\kappa_j.$
For every $\lambda \in \Lambda(H_0,F)$ one can define
the Hilbert space
$$
  \hlambda = \hlambda(H_0) = \hlambda(H_0,F) \subset \ell_2
$$ associated with frame $F,$ as the closure of the image of the evaluation operator in~$\ell_2:$
$$
  \hlambda(H_0,F) = \overline {\euE_\lambda(\hilb_1(F))}.
$$
The family of Hilbert spaces $\set{\hlambda, \lambda \in \Lambda(H_0,F)}$ form a measurable family with measurability base $\phi_j(\lambda):=\euE_\lambda\phi_j, \ j =1,2,\ldots$
(see \cite[Lemma 3.1.1]{Az3v5}).
The corresponding direct integral of Hilbert spaces
$$
  \euH :=   \int^\oplus_{\Lambda(H_0,F)} \hlambda \,d\lambda
$$
is naturally isomorphic to the absolutely continuous part of $\hilb$ with respect to~$H_0.$
The natural isomorphism is given by the direct integral of operators $\euE_\lambda(H_0,F)$
(see \cite[Proposition 3.3.5]{Az3v5})
$$
  \euE := \int^\oplus_{\Lambda(H_0,F)} \euE_\lambda(H_0,F)\,d\lambda,
$$
More exactly, the operator $\euE \colon \hilb_1 \to \euH,$ thus defined, can be considered as an operator on $\hilb,$
and this operator is bounded on $\hilb,$ vanishes on the singular subspace of $H_0$ and maps isometrically the absolutely continuous subspace
of $H_0$ onto $\euH$ (see \cite[\S 3]{Az3v5} for details and proofs).

For any $\lambda \in \Lambda(H_0,F)$ the operator $R_{\lambda+i0}(H_0)$ can be considered as a compact operator
$$
  R_{\lambda+i0}(H_0) \colon \hilb_1(F) \to \hilb_{-1}(F)
$$
and the operator $V$ can be considered as a bounded operator
$$
  V \colon \hilb_{-1}(F) \to \hilb_{1}(F),
$$
so that for any non-resonant $r$ (that is, for $r \notin R(\lambda; \set{H_s},V)$) the operator $S(\lambda; H_r,H_0) \colon \hlambda(H_0) \to \hlambda(H_0),$
given by the well-known stationary formula
$$
  S(\lambda; H_r,H_0) = 1_\lambda - 2\pi i \euE_\lambda(H_0) rV(1+rR_{\lambda+i0}(H_0)V)^{-1} \euE_\lambda^\diamondsuit(H_0),
$$
makes sense, where $$\euE_\lambda^\diamondsuit(H_0) = \abs{F}^{-2}\euE_\lambda^*(H_0) \colon \hlambda(H_0,F) \to \hilb_{-1}(F).$$
In \cite{Az3v5} it was shown that thus defined operator is unitary and that the operator
$$
  \int^\oplus_{\Lambda(H_0,F)} S(\lambda; H_1,H_0)\,d\lambda
$$
is naturally isomorphic to the scattering operator $\bfS(H_0+V,H_0)$ of the pair $H_0,H_0+V,$
the natural isomorphism being given by $\euE.$ That is, the operator $S(\lambda; H_r,H_0)$ is in fact the scattering matrix
(or on-shell scattering operator, --- in physical language). It is also shown in \cite{Az3v5} that, for a fixed $\lambda \in \Lambda(H_0,F),$
the scattering matrix $S(\lambda; H_r,H_0)$ as a function of the coupling constant $r$ admits analytic continuation to a neighbourhood of~$\mbR.$

An essential difference between classical approach to abstract stationary scattering theory (see e.g. \cite{BE,Ya}) and the one given in \cite{Az3v5}
is that, while in the former theory the scattering matrix is defined for a.e.~$\lambda$ and it is impossible to consider the scattering
matrix at a single point, --- this just does not make sense, in the latter theory the scattering matrix is explicitly constructed for \emph{every}
$\lambda$ from an a-priori given set of full Lebesgue measure.

Let $\clU_1(\hilb)$ be the group of all unitary operators on $\hilb$ of the form $1+$ trace-class operator, with topology inherited from $\clL_1(\hilb).$
Let $U \colon [a,b] \to \clU_1(\hilb)$ be a continuous path of unitary operators such that $U(a) = 1.$
$\xi$-invariant of this path can be defined as a continuous function $\xi \colon [a,b] \to \mbR$
such that for any $r \in [a,b]$
$$
  \xi(r) = - \frac 1{2\pi i} \log \det U(r),
$$
where the branch of $\log$ is chosen so that the function in the right hand side is continuous.
In fact, $\xi$-invariant is the Pushnitski $\mu$-invariant $\mu(\theta,\lambda)$ averaged
over the angle variable $\theta,$ see \cite{Pu01FA,Az3v5}, where $\mu(\theta,\lambda)$ is an integer which --- roughly speaking --- measures
the number of eigenvalues of $U(r)$ which cross the point $e^{i\theta}$ on the unit circle as $r$ moves from $0$ to $1.$

In \cite{Az3v5} for every $\lambda \in \Lambda(H_0,F)$ there was introduced the so-called absolutely continuous part $\xia(\lambda; H_r,H_0)$ of the spectral shift function
which is the $\xi$-invariant of the path of unitary operators
$$
  [0,1] \ni r \to S(\lambda; H_r,H_0).
$$
(Absolutely continuous and singular parts of spectral shift function were first introduced in \cite{Az}).
The function $\xia(\lambda; H_r,H_0)$ is the density of the absolutely continuous measure
$$
  \Delta \mapsto \int_0^r \Tr\brs{V E_\Delta^{H_s} P^{(a)}(H_s)}\,ds,
$$
where $E_\Delta^{H_s}$ is the spectral projection of the operator $H_s = H_0 +sV$ and $P^{(a)}(H_s)$
is the projection onto the absolutely continuous subspace of $H_s.$

It was shown in \cite{Az3v5} that $\xia(\lambda; H_r,H_0)$ is a summable function which differs from the spectral shift function $\xi(\lambda; H_r,H_0)$ by an a.e. integer-valued
function denoted by $\xis(\lambda; H_r,H_0)$ and called the singular part of the spectral shift function. In \cite{Pu01FA} it was actually shown that
the spectral shift function $\xi(\lambda; H_1,H_0)$ can be defined for every~$\lambda$ from the set $\Lambda(H_0,F) \cap \Lambda(H_1,F)$
of full Lebesgue measure as the $\xi$-invariant of a continuous path of unitary operators introduced in \cite{Pu01FA},
though this result was not formulated in \cite{Pu01FA} in these terms. So, for every fixed $\lambda \in \Lambda(H_0,F)$
the spectral shift function $\xi(\lambda; H_r,H_0),$ and hence
the singular spectral shift function $\xis(\lambda; H_r,H_0)$ too, can be considered as an explicitly defined function of the coupling
constant $r.$ In \cite{Az3v5} it was shown that these functions are locally analytic (even locally constant in case of $\xis$) functions of $r$
and that the discontinuities (integer jumps) of these functions can occur only at resonance points $r_0 \in R(\lambda; \set{H_s},V)$ of the path $\set{H_r}.$

Since the spectral shift function $\xi(\lambda; H_r,H_0)$ is point-wise additive (see \cite[Theorem 9.6.5]{Az3v5}),
the latter fact naturally suggests that the size of the jump of $\xi$ and $\xis$
as functions of $r$ for a fixed~$\lambda$ at a resonance point of the path $\set{H_r}$
should depend only on the
resonant at~$\lambda$ operator $H_{r_0}$ and the \emph{direction} of the perturbation
operator~$V.$ It turns out that, indeed, to the triple consisting of a point $\lambda \in \Lambda(H_0,F),$ of an operator $H_{r_0}$ resonant at~$\lambda$ and the direction $V$
one can assign a naturally defined integer number, which is called here \emph{resonance index} of the triple. The main result of this paper
can be formulated as follows.

{\bf Theorem.} {\it The jump of the singular spectral shift function (as a function of the coupling constant) at
a resonance point $r_0$ is equal to the resonance index.}

The resonance index of the triple $(\lambda; H_{r_0},V)$ can defined as follows. Consider the meromorphic operator-valued function
$$
  f_z(s) = (1 + sT_{z}(H_0)J)^{-1}.
$$
For $z = \lambda+i0,$ at a resonance point $s=r_0$ this function has a pole. At the same time, for $z  = \lambda+iy$ with $y>0$ this function cannot have real poles,
implying that the pole $r_0$ of the function $f_{\lambda+i0}(s)$ shifts away from the real axis and possibly splits into some $N_+$ poles in the upper half-plane
and some $N_-$ poles in the lower half-plane. It is shown that the number $N_+-N_-$ does not depend on the choice of the regular point $H_0$ on the line $\set{H_r=H_0+rV}.$
The difference $N_+-N_-$ is the resonance index of the triple $(\lambda; H_{r_0},V).$

It is also shown that the resonance index is equal to the residue (up to an absolute constant) of the pole $s = r_0$ of the meromorphic function
$$
  \mbC \ni s \mapsto \Tr(T_{\lambda+i0}(H_s)J).
$$
In the last section of this paper a non-trivial example of singular spectral shift function is given.

\section{Resonance index}
Given a framed Hilbert space $(\hilb,F),$ we denote by $\clA(F)$ the real Banach space of operators of the form
$V = F^*JF$ with bounded self-adjoint $J.$ The norm of $V = F^*JF$ is by definition $\norm{J}.$
Given a self-adjoint operator $H_0$ on $\hilb,$
we denote by $\clA$ the real affine space of self-adjoint operators
$$
  \clA = H_0 + \clA(F).
$$
The affine space $\clA$ has a topology induced by the topology of $\clA(F).$
We say that a real number~$\lambda$ is \emph{essentially $\clA$-regular}
(or just essentially regular, if there is no danger of confusion), if $\lambda \in \Lambda(H,F)$
(see (\ref{F: def of Lambda}) for definition of $\Lambda(H_,F)$) for some $H \in \clA.$
The set of essentially $\clA$-regular points has full Lebesgue measure.
We denote this set by $\Lambda(\clA,F).$
We say that an operator $H$ from $\clA$ is \emph{resonant at} an essentially regular point~$\lambda,$
if $\lambda \notin \Lambda(H,F);$ otherwise we say that $H$ is \emph{regular at}~$\lambda.$ For a fixed $\lambda,$ the set of resonant at~$\lambda$ operators is a closed meager set
(see \cite[Theorem 4.2.5]{Az3v5}). We denote the set of regular at~$\lambda$ operators by $\Gamma(\lambda) = \Gamma(\lambda; \clA,F).$
Thus, $\Gamma(\lambda)$ is a massive open set.
The set of resonant at~$\lambda$ operators will be denoted by $R(\lambda) = R(\lambda; \clA,F).$
For an analytic curve $\gamma$ in $\clA$ there are two possible options: either $\gamma$ is a subset of the resonance set $R(\lambda; \clA,F),$
or the intersection of $\gamma$ with the resonance set is a discrete set (see \cite[Theorem 4.2.5]{Az3v5}).
In the former case we say that the curve $\gamma$ is \emph{resonant at}~$\lambda,$
in the latter case we say that the curve $\gamma$ is \emph{regular at}~$\lambda.$ 
Similarly one can define resonant at~$\lambda$ and regular at~$\lambda$ real-analytic $n$-dimensional manifolds $\subset \clA.$

If an operator $H \in \clA$ is resonant at $\lambda \in \Lambda(\clA,F),$ then an operator $V \in \clA(F)$ is called a \emph{regularizing operator},
if $H+V$ is regular at~$\lambda.$ If $V \in \clA(F)$ is such that $rV$ is regularizing for some $r \in \mbR$ (that is, if $H+rV$ is regular at~$\lambda$),
then $V$ will be called a \emph{regularizing direction}.

In order to clarify these definitions 
let us consider the case of a finite dimensional Hilbert space $\hilb.$
In this case every number~$\lambda$ is essentially regular. 
An operator $H$ is resonant at~$\lambda$
if and only if~$\lambda$ is an eigenvalue of $H.$
If, for example, $V f = 0,$ where $Hf=\lambda f$ and $f\neq 0,$ then $V$ is not a regularizing
direction. In finite-dimensional case the frame $F$ becomes irrelevant.

Let~$\lambda$ be a fixed essentially regular point and let $\gamma = \set{H_r} \subset H_0 + \clA(F)$
be an analytic path regular at~$\lambda$ (here we are a bit sloppy in distinguishing a path and its image).

In \cite{Az3v5} it is proved (Theorem 9.7.6 and Corollary 9.7.7)
that if the functions
$$
  r \mapsto \xi(\lambda; H_r,H_0) \ \ \text{and} \ \  r \mapsto \xis_\gamma(\lambda; H_r,H_0)
$$ are not analytic at some point $r_0,$ then (1) the point $r_0$ is necessarily a resonance point of the path $\gamma,$
(2) at $r_0$ the functions $\xi(\lambda; H_r,H_0)$ and $\xis(\lambda; H_r,H_0)$ have left and right limits and (3) the jump
of these functions is an integer number.
Unlike $\xi$ and $\xis,$ the function $r \mapsto \xia(\lambda; H_r,H_0)$ is analytic \cite[Proposition 8.2.3]{Az3v5}.
The functions $\xi(\lambda; H_r, H_0)$ and $\xia_\gamma(\lambda; H_r, H_0)$ of the coupling constant $r$ are path additive, and so is $\xis_\gamma(\lambda; H_r, H_0).$
This suggests that the size of a jump at a resonance point $r_0$ should depend only on the resonant at~$\lambda$ operator $H_{r_0}$
and the \emph{direction} of~$V.$ It turns out that this is indeed the case.

Namely, given an essentially regular point $\lambda,$ a resonant at~$\lambda$ operator $H$ and a regularizing direction $V,$
one can define an integer number, which I call \emph{resonance index}, which depends only on $\lambda, H$ and $V$ and definition of which follows. It
turns out that the jump of the singular spectral shift function is equal to the resonance index. This is the main result of this paper.

\subsection{Definition of resonance index}
Let~$\lambda$ be an essentially regular point and let $V \in \clA(F).$ Let $H_0 \in \clA$ be an operator regular at $\lambda,$
that is, $H_0 \in \Gamma(\lambda; \clA,F).$
Let $V = F^*JF.$
Consider the operator
\begin{equation} \label{F: def of fz(s)}
  f_z(s) := f_z(s; H_0, V) := (1 + s T_z(H_0)J)^{-1}
\end{equation}
as a function of $s,$ where $T_z(H_0) = F R_z(H_0) F^*.$ By analytic Fredholm
alternative (see e.g. \cite[Theorem 1.8.2]{Ya}), this function is meromorphic.
When $z$ is non-real, by \cite[Lemma 4.1.4]{Az3v5} the function $f_z(s)$
cannot have real poles. On the other hand, for $z = \lambda \pm i0$ the function $f_z(s)$ may have real poles, which are resonance points
of the path $\mbR \ni s \mapsto H_0+sV.$

Note that the second resolvent identity implies that
\begin{equation} \label{F: def of fz(s) (2)}
  f_z(s; H_0, V) = 1 - s T_z(H_s)J
\end{equation}

\begin{picture}(400,120)
\put(15,100){{\small Poles of $f_{\lambda+iy}(s)$ \ (in $s$-plane):}}

\put(45,24){\circle*{3}}
\put(130,46){\circle*{3}}
\put(132,35){\circle*{3}}
\put(114,68){\circle*{3}}
\put(60,62){\circle*{3}}
\put(104,32){\circle*{3}}

\put(20,40){\vector(1,0){135}}
\put(90,10){\vector(0,1){80}}
\put(30,75){{\small $y \neq 0$}}

\put(227,22){\circle*{3}}
\put(310,40){\circle*{3}}
\put(290,65){\circle*{3}}
\put(243,60){\circle*{3}}
\put(274,40){\circle*{3}}

\put(200,40){\vector(1,0){135}}
\put(270,10){\vector(0,1){80}}
\put(210,75){{\small $y = 0$}}
\end{picture}

Poles of the function $f_z(s)$ under fixed $z$ form a discrete subset of $\mbC.$ Note that
\begin{equation} \label{F: a pole iff an eigenvalue}
  s_0 \text{ is a pole of } f_z(s) \ \Leftrightarrow \ -1/s_0 \text{ is an eigenvalue of } T_0(z)J.
\end{equation}
We say that the pole $s_0$ has multiplicity $N,$ if the corresponding eigenvalue $-1/s_0$ of $T_0(z)J$ has
multiplicity~$N.$ If $\Im z \neq 0,$ then the operator $T_0(z)J$ is trace class, and so in this case the multiplicity of a pole $s_0$ of $f_z(s)$ is equal
to the multiplicity of $-1/s_0$ as a zero of the entire function (called perturbation determinant)
$$
  \mbC \ni s \mapsto \det(1+sT_0(z)J).
$$
When $z$ changes slightly outside of $\mbR,$ the operator $T_0(z)J$ changes slightly in the uniform norm; as a consequence, eigenvalues $-1/s_0$ change slightly
(see e.g. \cite{Kato}).
By (\ref{F: a pole iff an eigenvalue}), this means that poles of $f_z(s)$ change slightly, when $z$ changes slightly.
If a pole $s_0$ has multiplicity $>1,$ then, as $z$ changes slightly, the pole may split into several poles. We say that those poles belong to the same group.
As it follows from stability properties of isolated eigenvalues (see e.g. \cite{Kato}),
the total multiplicity of poles in a group is preserved.

Let $s = r_0$ be a real pole of $f_{\lambda+i0}(s),$ and let $N$ be the multiplicity of this pole.
When $\lambda+i0$ is changed to $\lambda+iy,$
by \cite[Lemma 5.4]{Az3v5}
the pole $r_0$ gets off the real axis and it may split to a group of poles. If $N$ is the multiplicity of the real pole $r_0$,
then in the upper half-plane there appear in total $N_+$ poles (counting multiplicities) in the group
of $r_0$ and in the lower half-plane there appear $N_-$ poles, where
$N_+$ and $N_-$ are some two integers such that
$0 \leq N_+, N_- \leq N$ and $N = N_+ + N_-.$

We shall call poles of a group from $\mbC_+$ (respectively, $\mbC_-$) \emph{up-poles} (respectively, \emph{down-poles}).

\begin{rems} \rm Since spectral measures of operators $T_z(H_0)J$ and $JT_z(H_0)$ coincide,
in the definition of resonance index which follows one can
use the function $$(1 + s J T_z(H_0))^{-1}$$ instead of (\ref{F: def of fz(s)}).
On the other hand, by \cite[(12)]{Az3v5} and \cite[(13)]{Az3v5},
usage of the functions $$(1 + s J T_{\bar z}(H_0))^{-1} \ \text{and} \ (1 + s T_{\bar z}(H_0)J)^{-1}$$
would obviously lead to a change of sign in the definition of the resonance index.
\end{rems}

Recall that property of an operator $H\in \clA$ to be resonant at some point~$\lambda$ does not depend on a path $\set{H_r}$ containing $H.$
\begin{defn} \label{D: res index}
Let~$\lambda$ be an essentially regular point. Let $H_0 \in \clA,$ let $V \in \clA(F)$ and let $H_r = H_0 + rV,$
$r \in \mbR.$ Assume that the line $\set{H_r, r \in \mbR}$ is regular at~$\lambda$ and let $H_{r_0}$
be resonant at $\lambda.$
\emph{Resonance index} of a resonant at~$\lambda$ operator $H_{r_0}$ in regularizing direction $V$
is the number
$$
  \mathrm{ind}_{res} (\lambda; H_{r_0},V) = N_+ - N_-.
$$
That is, the resonance index is equal to the difference of the numbers of up-poles and down-poles in the group of $r_0,$
where poles are counted according to their multiplicity.
\end{defn}
To the best knowledge of the author, this notion is new.

One should justify this definition, since the definition of numbers $N_\pm$ involves operator~$H_0,$
which can be chosen arbitrarily on the line $\set{H_0+rV, r \in \mbR}.$
The next proposition does this by showing that the difference $N_+-N_-$ does not depend on the choice of an operator $H_0$ on the line $\set{H_0 + rV, r \in \mbR}.$
For this, we use the second expression for $f_z(s)$ in (\ref{F: def of fz(s)}) which does not depend on particular choice of~$H_0.$

In fact, one can say more: it turns out that the kernel and the root space corresponding to zero eigenvalue of $1 + r_0 T_{\lambda+i0}(H_0)J$
are invariants of the resonant operator $H_{r_0}$ and a regularizing \emph{direction} $V.$
This will be shown in Corollaries \ref{C: A} and \ref{C: B}.

Before proceeding to the following proposition, we make one remark about indexing of resonant operators.
Usually we denote by $H_0$ an operator which is regular at $\lambda,$ and by $H_{r_0},$ where $r_0\neq 0,$
we denote an operator which is resonant at $\lambda.$ However, since from now on we shall focus on resonant operators,
we shall assume that $H_0$ is resonant at $\lambda.$ For this reason, we use definition (\ref{F: def of fz(s) (2)}) of $f_z(s).$

\begin{prop} \label{P: A}
Let $H_0\in \clA,$ let $V=F^*JF \in \clA(F)$ and let $H_{r} = H_0+rV,$ where $r \in \mbR.$ Let $z \in \mbC \setminus \mbR.$
If a non-zero vector $\psi_{z} \in \clK$ and a number $\alpha_z \in \mbC_+$ are such that for some $r \in \mbR$
$$
  (1-(r+\alpha_z)T_{z}(H_{r})J)\psi_{z} = 0,
$$
then for any other real number $s$ the equality
$$
  (1-(s+\alpha_z)T_{z}(H_{s})J)\psi_{z} = 0
$$
holds. In particular,
$$
  (1-\alpha_z T_{z}(H_{0})J)\psi_{z} = 0.
$$
\end{prop}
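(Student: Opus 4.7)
The plan is to translate the hypothesis into an eigenvalue equation for $T_z(H_r)J$ and then transport it along the line $\{H_s\}$ by means of the second resolvent identity. Since $\alpha_z \in \mbC_+$ and $r,s \in \mbR$, the scalars $r+\alpha_z$ and $s+\alpha_z$ have strictly positive imaginary part, hence are nonzero. The assumption therefore reads
$$
  T_z(H_r) J \psi_z = \frac{1}{r+\alpha_z}\,\psi_z,
$$
and the goal is the analogous equality with $r$ replaced by any $s \in \mbR$.

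The key ingredient is the identity
$$
  T_z(H_s) = T_z(H_r) - (s-r) T_z(H_r) J T_z(H_s),
$$
which follows by sandwiching the second resolvent identity $R_z(H_s) - R_z(H_r) = -(s-r)R_z(H_r) V R_z(H_s)$ between $F$ and $F^*$ and using $V = F^*JF$. Rewritten as $[1+(s-r)T_z(H_r)J]\,T_z(H_s) = T_z(H_r)$ and applied to $J\psi_z$, it gives
$$
  [1+(s-r)T_z(H_r)J]\, T_z(H_s) J \psi_z = T_z(H_r) J \psi_z = \frac{1}{r+\alpha_z}\,\psi_z.
$$
On the other hand, a direct application of the eigenvalue equation yields
$$
  [1+(s-r)T_z(H_r)J]\,\psi_z = \psi_z + \frac{s-r}{r+\alpha_z}\,\psi_z = \frac{s+\alpha_z}{r+\alpha_z}\,\psi_z,
$$
so that the vectors $T_z(H_s) J \psi_z$ and $(s+\alpha_z)^{-1}\psi_z$ have the same image under $1+(s-r)T_z(H_r)J$.

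The final step invokes invertibility: because $z \in \mbC \setminus \mbR$, the cited \cite[Lemma 4.1.4]{Az3v5} (applied to the self-adjoint operator $H_r \in \clA$) asserts that $1+tT_z(H_r)J$ has no real zeros in $t$, so in particular $1+(s-r)T_z(H_r)J$ is invertible. Consequently
$$
  T_z(H_s) J \psi_z = \frac{1}{s+\alpha_z}\,\psi_z,
$$
which is the claimed identity $(1-(s+\alpha_z)T_z(H_s)J)\psi_z = 0$; the ``in particular'' statement follows by taking $s=0$. The argument is essentially algebraic once the resolvent-sandwich identity is in hand; the only point requiring care is the non-vanishing of the scalars $r+\alpha_z,\,s+\alpha_z$ and of the operator $1+(s-r)T_z(H_r)J$, which is precisely where the assumptions $\alpha_z \in \mbC_+$ and $z \notin \mbR$ are used.
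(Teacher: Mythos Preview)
Your proof is correct and follows essentially the same route as the paper: both rewrite the hypothesis as $T_z(H_r)J\psi_z=\frac{1}{r+\alpha_z}\psi_z$, invoke the second resolvent identity in the sandwiched form $\bigl(1+(s-r)T_z(H_r)J\bigr)T_z(H_s)J=T_z(H_r)J$, and then use that $\psi_z$ is an eigenvector to read off the new eigenvalue $\frac{1}{s+\alpha_z}$. Your version is slightly more explicit in isolating why $1+(s-r)T_z(H_r)J$ is invertible (via \cite[Lemma~4.1.4]{Az3v5}) and why the scalars $r+\alpha_z,\,s+\alpha_z$ are nonzero, but these are exactly the ingredients the paper uses implicitly through its reference to \cite[(4.7)]{Az3v5}.
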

\begin{proof} Let
\begin{equation} \label{F: A(-s)z}
  A^{(s)}_{z} = T_{z}(H_{s})J.
\end{equation}
The second resolvent identity implies (see also \cite[(4.7)]{Az3v5}) that for any two real numbers $s,r$
\begin{equation} \label{F: A(-s)z=...}
  A^{(s)}_z  = (1+(s-r)A^{(r)}_z)^{-1}A^{(r)}_z.
\end{equation}
We have
$$
  A^{(r)}_z \psi_z = \frac 1{r + \alpha_z} \psi_z.
$$
It follows that
\begin{equation*} 
  \begin{split}
    A^{(s)}_z \psi_z & = (1+(s-r)A^{(r)}_z)^{-1} A^{(r)}_z \psi_z
    \\ & = \SqBrs{1+(s-r) \cdot \frac 1{r + \alpha_z}} ^{-1} \frac 1{r + \alpha_z} \psi_z = \frac 1{s + \alpha_z}\psi_z.
  \end{split}
\end{equation*}
This completes the proof.
\end{proof}
Main consequence of this proposition is that (a) the eigenvector $\psi_z$ depends only on $H_0$
(which can so far be assumed to be an arbitrary operator from $\clA,$ since $z \notin \mbR$), on the number $z$
and on the \emph{direction} of the operator $V;$
the vector $\psi_z$ does not depend on the choice of an operator $H_s$ on the affine line $\set{H_r, r \in \mbR}$
and (b) the eigenvalue $\frac 1{s+\alpha_z}$ of $H_s$ depends on the choice of $H_s$ but in a simple manner;
more importantly, the half-plane $\mbC_\pm,$ to which eigenvalues of
$T_z(H_r)J$ belong, do not depend on $r.$


The situation becomes more interesting when $z = \lambda\pm i0$ and the operator $H_0$ is not regular at~$\lambda.$
In this case operators $T_{\lambda\pm i0}(H_0)$ do not exist, so that we cannot define $\psi_{\lambda+i0}$ directly.
But we can regularize the operator $H_0$ by adding to it the perturbation $sV$ and then define the eigenvector $\psi_{\lambda+i0}$
using a shifted operator $H_0+sV.$ The eigenvector $\psi_{\lambda+i0}$ does not depend on the choice of $s,$ as the following corollary shows.

\begin{cor} \label{C: A} Let~$\lambda$ be an essentially regular point.
Let $H_0\in \clA$ be resonant at~$\lambda$ operator and let $rV=rF^*JF$ be a regularizing operator,
so that the operator $H_{r} = H_0+rV$ is regular at~$\lambda.$
If a non-zero vector $\psi_{\lambda+i0} \in \clK$ is such that
\begin{equation} \label{F: resonance equation}
  (1-rT_{\lambda+i0}(H_{r})J)\psi_{\lambda+i0} = 0,
\end{equation}
then for any other regularizing operator $sV$ the equality
$$
  (1-sT_{\lambda+i0}(H_{s})J)\psi_{\lambda+i0} = 0
$$
holds.
\end{cor}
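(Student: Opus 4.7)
The plan is to mirror the proof of Proposition \ref{P: A} at $z = \lambda+i0$ with the ``eigenvalue parameter'' $\alpha_z=0$. The essential algebra is identical; the only new point is justifying the boundary-value version of the operator identity (\ref{F: A(-s)z=...}) when $H_r$ and $H_s$ are regular at $\lambda$ but the base point $H_0$ is not.

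First I would rewrite the hypothesis (\ref{F: resonance equation}) as
$$
  T_{\lambda+i0}(H_r) J \psi_{\lambda+i0} = \tfrac{1}{r}\psi_{\lambda+i0},
$$
noting that $r\neq 0$ (otherwise $H_0=H_r$ would be simultaneously resonant and regular at~$\lambda$). The goal is to prove the analogous equality with $r$ replaced by $s$. Next I would establish the boundary-value resolvent identity
$$
  T_{\lambda+i0}(H_s) - T_{\lambda+i0}(H_r) = -(s-r)\, T_{\lambda+i0}(H_s)\, J\, T_{\lambda+i0}(H_r),
$$
by sandwiching the second resolvent identity $R_z(H_s)-R_z(H_r)=-(s-r)R_z(H_s)VR_z(H_r)$ between $F$ and $F^*$, and then passing to the limit $y \to 0^+$ along $z=\lambda+iy$. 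Because both $H_r$ and $H_s$ belong to $\Gamma(\lambda)$, by the very definition of $\Lambda(H,F)$ the operators $T_{\lambda+iy}(H_r)$ and $T_{\lambda+iy}(H_s)$ converge in uniform norm to their boundary values, and $J$ is bounded, so the product on the right converges in uniform norm as well.

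Finally, I would apply the identity to $J\psi_{\lambda+i0}$, substitute $T_{\lambda+i0}(H_r)J\psi_{\lambda+i0} = \tfrac{1}{r}\psi_{\lambda+i0}$, and simplify:
$$
  T_{\lambda+i0}(H_s)J\psi_{\lambda+i0} - \tfrac{1}{r}\psi_{\lambda+i0} = -\tfrac{s-r}{r}\,T_{\lambda+i0}(H_s)J\psi_{\lambda+i0},
$$
which, after multiplying by $r$ and collecting terms, becomes $s\, T_{\lambda+i0}(H_s)J\psi_{\lambda+i0} = \psi_{\lambda+i0}$, precisely the desired equation $(1 - s T_{\lambda+i0}(H_s)J)\psi_{\lambda+i0}=0$.

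The only mild obstacle is the justification of the limiting procedure above; it is harmless here, but is the one place where the hypothesis ``$sV$ is regularizing'' is actually used. Everything else is a direct algebraic transcription of the calculation already carried out in Proposition \ref{P: A}, with the role of the complex parameter $\alpha_z$ played by $0$.
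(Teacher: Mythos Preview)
Your proof is correct and follows essentially the same strategy as the paper: transplant the calculation of Proposition~\ref{P: A} to $z=\lambda+i0$ with $\alpha_z=0$, after noting that the operators $T_{\lambda+i0}(H_r)J$ and $T_{\lambda+i0}(H_s)J$ exist because $rV$ and $sV$ are regularizing. The only difference is a minor technical shortcut: the paper works with the inverted identity $A^{(s)}=(1+(s-r)A^{(r)})^{-1}A^{(r)}$ and therefore must invoke \cite[Theorem 4.1.11]{Az3v5} to justify existence of the inverse, whereas you use the non-inverted second resolvent identity $T_{\lambda+i0}(H_s)-T_{\lambda+i0}(H_r)=-(s-r)T_{\lambda+i0}(H_s)JT_{\lambda+i0}(H_r)$ directly and so avoid that citation.
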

\begin{proof} Firstly, we note that since the operator $H_0 = H_{r}-rV$ is resonant at $\lambda,$
such a non-zero vector $\psi_{\lambda+i0}$ necessarily exists (see \cite[Theorem 4.1.11]{Az3v5}).

Since operators $rV$ and $sV$ are regularizing, (using notation (\ref{F: A(-s)z})) operators $A^{(r)}_{\lambda+i0}$ and $A^{(s)}_{\lambda+i0}$ make sense.
By \cite[Theorem 4.1.11]{Az3v5}, the (bounded) operator $(1+(s-r)A^{(r)}_{\lambda+i0})^{-1}$ also exists.

So, the proof follows verbatim the proof of Proposition \ref{P: A} with $z = \lambda+i0$ and $\alpha_z =~0.$
\end{proof}

In conditions of this corollary, the number $0$ is a resonance point of the path $\set{H_r}.$

Since the compact operator $T_{\lambda+iy}(H_s)J$ is not self-adjoint, in general
the root space corresponding to the eigenvalue $s^{-1}$ of $T_{\lambda+i0}(H_s)J$
may have root vectors which are not eigenvectors. The following proposition shows
that the root space also does not depend on $s.$

\begin{prop} \label{P: B}
Let $H_0\in \clA,$ let $V=F^*JF$ and let $H_{r} = H_0+rV,$ where $r \in \mbR.$ Let $z \in \mbC \setminus \mbR$
and let $n \in \set{1,2,\ldots}.$
If a non-zero vector $\psi_{z} \in \clK$ and a number $\alpha_z \in \mbC_+$ are such that for some $r \in \mbR$
$$
  (1-(r+\alpha_z)T_{z}(H_{r})J)^n\psi_{z} = 0,
$$
then for any other real number $s$ the equality
$$
  (1-(s+\alpha_z)T_{z}(H_{s})J)^n\psi_{z} = 0
$$
holds. In particular,
$$
  (1-\alpha_z T_{z}(H_{0})J)^n\psi_{z} = 0.
$$
\end{prop}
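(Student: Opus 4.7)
The plan is to extend the computation in Proposition \ref{P: A} by establishing a clean operator identity that factors $1 - (s+\alpha_z)A^{(s)}_z$ and then iterating. I would set $B = A^{(r)}_z = T_z(H_r)J$ and use the resolvent-type identity (\ref{F: A(-s)z=...}) to write
\[
  A^{(s)}_z = (1+(s-r)B)^{-1}B = B(1+(s-r)B)^{-1},
\]
noting that the second equality holds because $(1+(s-r)B)^{-1}$ is a function of $B$ and hence commutes with $B$.

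Next I would compute, using simple algebra inside $(1+(s-r)B)^{-1}$:
\[
  1 - (s+\alpha_z)A^{(s)}_z
    = \bigl[(1+(s-r)B) - (s+\alpha_z)B\bigr](1+(s-r)B)^{-1}
    = (1-(r+\alpha_z)B)(1+(s-r)B)^{-1}.
\]
Because $(1-(r+\alpha_z)B)$ and $(1+(s-r)B)^{-1}$ are both polynomials/rational functions of the single operator $B$, they commute. Therefore for every $n\ge 1$,
\[
  (1-(s+\alpha_z)A^{(s)}_z)^n = (1-(r+\alpha_z)B)^n\,(1+(s-r)B)^{-n},
\]
and the two factors on the right continue to commute.

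Finally I would apply this to $\psi_z$. Setting $\phi = (1+(s-r)B)^{-n}\psi_z$ and using the commutativity,
\[
  (1-(s+\alpha_z)A^{(s)}_z)^n\psi_z
    = (1-(r+\alpha_z)B)^n\phi
    = (1+(s-r)B)^{-n}(1-(r+\alpha_z)B)^n\psi_z = 0
\]
by hypothesis. Specializing to $s=0$ (and noting that if $H_r$ is such that $A^{(r)}_z$ exists and $1+(s-r)A^{(r)}_z$ is invertible for the required $s$, which is automatic for $z\notin\mbR$ as observed after (\ref{F: def of fz(s)})) yields the final assertion. The only point requiring care is the legitimacy of $(1+(s-r)B)^{-1}$, but since $z\notin\mbR$ implies $B = T_z(H_r)J$ is compact and cannot produce a real eigenvalue of the relevant kind (by \cite[Lemma 4.1.4]{Az3v5} applied via the identification (\ref{F: a pole iff an eigenvalue})), the inverse exists as a bounded operator and the computation is valid. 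There is no real obstacle beyond recognizing the factorization and the commutativity, which turn Proposition \ref{P: A} into this statement essentially for free.
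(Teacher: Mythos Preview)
Your argument is correct. The factorization
\[
  1-(s+\alpha_z)A^{(s)}_z=(1+(s-r)B)^{-1}\bigl(1-(r+\alpha_z)B\bigr),\qquad B=A^{(r)}_z,
\]
is easily verified from (\ref{F: A(-s)z=...}), and since both factors are rational functions of the single compact operator $B$ they commute, so raising to the $n$th power and applying the hypothesis gives the result immediately. The invertibility of $1+(s-r)B$ for real $s$ is indeed guaranteed by \cite[Lemma 4.1.4]{Az3v5} (with $H_r$ in place of $H_0$), so there is no gap.

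The paper takes a slightly different path: it argues by induction on $n$, using Proposition~\ref{P: A} to change the outermost factor from $1-(r+\alpha_z)A^{(r)}_z$ to $1-(s+\alpha_z)A^{(s)}_z$, then invoking the commutativity of $A^{(r)}_z$ and $A^{(s)}_z$ (which again follows from (\ref{F: A(-s)z=...})) to move that factor past the remaining $k$th power, and finally applying the induction hypothesis. Your approach short-circuits the induction by producing a closed-form factorization in terms of $B$ alone; this makes the commutativity trivially visible and yields the $n$th-power statement in one stroke. Both arguments rest on exactly the same algebraic fact---that $A^{(s)}_z$ is a rational function of $A^{(r)}_z$---but your presentation is more direct.
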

\begin{proof} For $n=1$ this is proved in Proposition \ref{P: A}. Assume that the assertion is true for $n=k.$

For $n=k+1,$ we have
$$
  (1-(r+\alpha_z)T_{z}(H_{r})J)(1-(r+\alpha_z)T_{z}(H_{r})J)^k\psi_{z} = 0.
$$
It follows from this and Proposition \ref{P: A} (applied to the vector $(\ldots)^k \psi_z$) that
$$
  (1-(s+\alpha_z)T_{z}(H_{s})J)(1-(r+\alpha_z)T_{z}(H_{r})J)^k\psi_{z} = 0.
$$
Since the operators $T_{z}(H_{s})J$ and $T_{z}(H_{r})J$ commute, it follows that
$$
  (1-(r+\alpha_z)T_{z}(H_{r})J)^k (1-(s+\alpha_z)T_{z}(H_{s})J) \psi_{z} = 0.
$$
By the induction assumption, this implies that
$$
  (1-(s+\alpha_z)T_{z}(H_{s})J)^k (1-(s+\alpha_z)T_{z}(H_{s})J) \psi_{z} = 0.
$$
Proof is complete.
\end{proof}

\begin{cor} \label{C: B} Let~$\lambda$ be an essentially regular point.
Let $H_0\in \clA$ be resonant at~$\lambda$ operator and let $rV=rF^*JF$ be a regularizing operator,
so that the operator $H_{r} = H_0+rV$ is regular at~$\lambda.$
Let $n \in \set{1,2,\ldots}.$
If a non-zero vector $\psi_{\lambda+i0} \in \clK$ is such that
\begin{equation} \label{F: resonance equation of order n}
  (1-rT_{\lambda+i0}(H_{r})J)^n\psi_{\lambda+i0} = 0,
\end{equation}
then for any other regularizing operator $sV$ the equality
$$
  (1-sT_{\lambda+i0}(H_{s})J)^n\psi_{\lambda+i0} = 0
$$
holds.
\end{cor}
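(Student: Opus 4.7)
The plan is to prove Corollary \ref{C: B} by mimicking the proof of Proposition \ref{P: B}, specializing to $z = \lambda + i0$ and $\alpha_z = 0$, just as the proof of Corollary \ref{C: A} specialized the proof of Proposition \ref{P: A}. I will proceed by induction on $n$. The base case $n = 1$ is precisely the content of Corollary \ref{C: A}, so there is nothing to check there.

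For the inductive step, assume the statement holds with $n$ replaced by $k$, and suppose $(1 - rT_{\lambda+i0}(H_r)J)^{k+1}\psi_{\lambda+i0} = 0$. Rewriting this as
\[
  (1 - rT_{\lambda+i0}(H_r)J)\bigl[(1 - rT_{\lambda+i0}(H_r)J)^k\psi_{\lambda+i0}\bigr] = 0,
\]
I would apply Corollary \ref{C: A} to the vector in brackets to obtain
\[
  (1 - sT_{\lambda+i0}(H_s)J)\,(1 - rT_{\lambda+i0}(H_r)J)^k\psi_{\lambda+i0} = 0.
\]
The crucial algebraic fact here is that $A^{(r)}_{\lambda+i0} = T_{\lambda+i0}(H_r)J$ and $A^{(s)}_{\lambda+i0} = T_{\lambda+i0}(H_s)J$ commute, since the second resolvent identity, in the form $A^{(s)}_{z} = (1+(s-r)A^{(r)}_{z})^{-1}A^{(r)}_{z}$ used in the proof of Proposition \ref{P: A} (and recorded as \cite[(4.7)]{Az3v5}), expresses $A^{(s)}_{\lambda+i0}$ as a rational function of $A^{(r)}_{\lambda+i0}$. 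Both operators make sense because $rV$ and $sV$ are regularizing, and the inverse $(1 + (s-r)A^{(r)}_{\lambda+i0})^{-1}$ exists by \cite[Theorem 4.1.11]{Az3v5}, exactly as noted inside the proof of Corollary \ref{C: A}. Using this commutativity I can swap factors to arrive at
\[
  (1 - rT_{\lambda+i0}(H_r)J)^k\,(1 - sT_{\lambda+i0}(H_s)J)\psi_{\lambda+i0} = 0.
\]
Finally, the induction hypothesis applied to the vector $(1 - sT_{\lambda+i0}(H_s)J)\psi_{\lambda+i0}$ yields
\[
  (1 - sT_{\lambda+i0}(H_s)J)^{k+1}\psi_{\lambda+i0} = 0,
\]
completing the induction.

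The only real point of care, and hence the main obstacle if any, is justifying that the passage from the non-real $z$ setting of Proposition \ref{P: B} to the boundary value $z = \lambda + i0$ preserves the two tools used: the resolvent identity for $A^{(s)}_z$ and the existence of the inverse $(1+(s-r)A^{(r)}_z)^{-1}$. Both are already established in \cite{Az3v5} under the hypothesis that the operators involved are regular at $\lambda$, which is built into the definition of a regularizing direction. With these in hand, the argument is entirely algebraic and proceeds verbatim as above.
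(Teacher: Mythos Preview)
Your proof is correct and matches the paper's intended argument: the paper omits the proof, stating only that it is similar to the proof of Corollary~\ref{C: A} (the self-reference to Corollary~\ref{C: B} in the paper is a typo), which in turn specializes Proposition~\ref{P: A} to $z=\lambda+i0$ and $\alpha_z=0$; your write-up carries out exactly this specialization of the inductive argument in Proposition~\ref{P: B}, with the same justification for the existence of $A^{(r)}_{\lambda+i0}$, $A^{(s)}_{\lambda+i0}$, and $(1+(s-r)A^{(r)}_{\lambda+i0})^{-1}$ via \cite[Theorem~4.1.11]{Az3v5}.
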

Proof of this corollary is similar to the proof of Corollary \ref{C: B}, and therefore it is omitted.

Proposition \ref{P: B} and Corollary \ref{C: B} imply that there is a correctly defined resonance index.

\begin{thm} Definition of the resonance index $\ind_{res}(\lambda; H_0,V)$ does not depend on the choice of a regular operator $H_s$ on the affine line $\set{H_r, r \in \mbR}.$
\end{thm}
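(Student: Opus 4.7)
The plan is to show that translating the base point along the affine line $\set{H_r : r \in \mbR}$ by a real amount $t$ translates the entire pole picture of $f_z$ in the $s$-variable by $-t$; since a real translation preserves the sign of each imaginary part, this will leave $N_+$ and $N_-$ individually invariant and hence fix their difference.

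Fix two regular base operators, say $H_0$ and $H_t := H_0 + tV$, on the same line. My first step is to establish, for $z \in \mbC \setminus \mbR$, a multiplicity-preserving bijection between the poles of $s \mapsto f_z(s; H_0, V)$ and those of $\sigma \mapsto f_z(\sigma; H_t, V)$, explicitly the shift $s \mapsto s - t$. The mechanism is Proposition \ref{P: A}: a pole at $s$ of $f_z(\cdot; H_0, V)$ corresponds to a nonzero $\psi_z$ with $T_z(H_0) J \psi_z = -s^{-1} \psi_z$, which is the hypothesis of that proposition with $r = 0$ and $\alpha_z = -s$; applying the conclusion with parameter $t$ yields $T_z(H_t) J \psi_z = (t - s)^{-1} \psi_z$, so the same $\psi_z$ witnesses a pole of $f_z(\cdot; H_t, V)$ at $\sigma = s - t$. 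By the symmetric roles of $H_0$ and $H_t$ this is a genuine bijection on pole sets; Proposition \ref{P: B} upgrades it to an equality of the full root spaces of $T_z(H_0)J$ at eigenvalue $-s^{-1}$ and $T_z(H_t)J$ at eigenvalue $-(s-t)^{-1}$, and hence to an equality of multiplicities.

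Next I would specialise to $z = \lambda + iy$ and let $y \to 0^+$. The real pole $r_0$ of $f_{\lambda+i0}(\cdot; H_0, V)$ of multiplicity $N$ splits, for small $y > 0$, into a group of up-poles $\set{s_j^+}$ and down-poles $\set{s_k^-}$ with counts $N_+(H_0)$ and $N_-(H_0)$. By the bijection above, this group transports under $s \mapsto s - t$ to the group of poles of $f_{\lambda+iy}(\cdot; H_t, V)$ that limits, as $y \to 0^+$, to the real pole $r_0 - t$ of $f_{\lambda+i0}(\cdot; H_t, V)$, which corresponds to the same resonant operator re-indexed on the line. Since $t$ is real, $\Im(s_j^+ - t) > 0$ and $\Im(s_k^- - t) < 0$, so up-poles stay up and down-poles stay down; combined with the preserved multiplicities this gives $N_\pm(H_0) = N_\pm(H_t)$, and therefore $N_+ - N_-$ is the same whether computed from $H_0$ or from $H_t$.

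The main (minor) subtlety I anticipate is verifying that the splitting-into-groups structure is compatible with the bijection: the grouping is defined via a small deformation $z = \lambda + iy$ of the spectral parameter, whereas the bijection $s \mapsto s - t$ is established for each fixed $z \notin \mbR$ separately. However, Proposition \ref{P: B} applies uniformly in $z$ off the real axis (it only uses the commutation of the operators $T_z(H_s)J$ and $T_z(H_r)J$ and the invertibility of $1+(s-r)T_z(H_r)J$), so the correspondence depends continuously on $y$ and matches the group around $r_0$ for $f_z(\cdot; H_0, V)$ to the group around $r_0 - t$ for $f_z(\cdot; H_t, V)$, which is exactly what is needed to conclude.
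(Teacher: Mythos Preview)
Your proposal is correct and follows essentially the same route as the paper: both arguments invoke Propositions~\ref{P: A} and~\ref{P: B} to show that the root-space data attached to the resonance point is independent of which regular operator on the line serves as base, so that $N_+$ and $N_-$ are individually unchanged; your ``real translation $s\mapsto s-t$ of the pole picture'' is just a more explicit geometric rephrasing of the paper's one-line observation that the half-plane distribution of the eigenvalues of $A_z^{(r)}$ in the group does not depend on $r$. One cosmetic point: Proposition~\ref{P: A} as stated assumes $\alpha_z\in\mbC_+$, whereas you apply it with $\alpha_z=-s$ for $s$ in either half-plane---this is harmless, since the proof of that proposition nowhere uses the sign hypothesis, but it would be cleaner to note this.
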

\begin{proof} 
Proposition \ref{P: B} implies that 
numbers of eigenvalues of $A_z^{(r)} = T_z(H_{r})J$ (counting multiplicities) in the group of a resonance point $r=0,$
which lie in $\mbC_\pm,$
do not depend on the choice of $r.$
So, the resonance index is well-defined as the difference of the numbers of eigenvalues (counting algebraic multiplicities) in those half-planes.
\end{proof}

The equation (\ref{F: resonance equation of order n}) will be called \emph{resonance equation of order $n$}.
Non-zero solutions
$$
  \psi_{\lambda+i0} = \psi_{\lambda+i0}(n,H_0,V)
$$
of the equation (\ref{F: resonance equation of order n}) will be called \emph{resonance vectors of order $n$} for the triple $(\lambda; H_0,V),$
consisting of an essentially regular point $\lambda,$ an operator $H_0$ resonant at~$\lambda$ and a regularizing direction~$V.$

For a resonant at~$\lambda$ operator $H_0$ with regularizing direction $V$
we denote by
$$
  \Upsilon(\lambda+i0; H_0,V)
$$
the root space of the operator $1-rT_{\lambda+i0}(H_{r})J,$ that is, the finite-dimensional vector space spanned by resonance vectors of all orders.
As shown above, this root space does not depend on $r.$
When $\lambda+i0$ is changed to $\lambda+iy$ with small positive $y,$ the root space changes continuously,
and the corresponding pole splits (more exactly, may split) into a group of non-real poles, but the total multiplicity remains
unchanged. We denote by
$$
  \Upsilon^\uparrow(\lambda+iy; H,V) \ (\text{respectively}, \ \Upsilon^\downarrow(\lambda+iy; H,V))
$$
the direct sum of root spaces corresponding to poles of the group in $\mbC_+$ (respectively, $\mbC_-$).

By definition of resonance index, the equality
\begin{equation} \label{F: ind-res(z)=dim up-dim down}
  \ind_{res}(\lambda; H,V) = \dim \Upsilon^\uparrow(\lambda+iy; H,V) - \dim \Upsilon^\downarrow(\lambda+iy; H,V)
\end{equation}
holds for all small enough $y>0.$

Assume that $H_0$ is resonant at~$\lambda$ and that $V$ is a regularizing direction.
Let us consider the idempotent operator
\begin{equation} \label{F: P(l+i0)=...}
  P(\lambda+i0; H_0,V) = \frac 1{2\pi i} \oint _{C(1/s)} \brs{t - T_{\lambda+i0}(H_s)J}^{-1}\,dt,
\end{equation}
where $C(1/s)$ is a small circle enclosing the eigenvalue $1/s$ of the operator $T_{\lambda+i0}(H_s)J.$
We know, by Corollary \ref{C: B} that the image $\Upsilon(\lambda+i0; H_0,V)$ of this idempotent does not depend on $s.$
Since the compact operator $T_{\lambda+i0}(H_s)J$ is not self-adjoint, the idempotent operator
$P(\lambda+i0; H_0,V)$ in general could depend on $s,$ but it turns out that
\begin{thm} The idempotent operator $P(\lambda+i0; H_0,V),$ defined by formula (\ref{F: P(l+i0)=...}), does not depend on $s.$
\end{thm}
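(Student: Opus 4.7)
The plan is to reduce the claim to the Riesz--Dunford holomorphic functional calculus. The crucial observation is that for any two regularizing values $s$ and $r$ of the affine line $\{H_r\},$ the operators $A^{(s)}:=T_{\lambda+i0}(H_s)J$ and $A^{(r)}:=T_{\lambda+i0}(H_r)J$ are holomorphic functions of one another. Indeed, by the second-resolvent identity (\ref{F: A(-s)z=...}), extended to $z=\lambda+i0$ exactly as in the proof of Corollary \ref{C: A}, one has
$$
  A^{(s)} \;=\; \bigl(1+(s-r)A^{(r)}\bigr)^{-1}A^{(r)} \;=\; g_{sr}\bigl(A^{(r)}\bigr), \qquad g_{sr}(\mu) := \frac{\mu}{1+(s-r)\mu},
$$
whenever $H_r$ and $H_s$ are both regular at $\lambda.$ The pole of $g_{sr}$ at $\mu=-1/(s-r)$ lies outside $\sigma(A^{(r)})$ precisely because $1+(s-r)A^{(r)}$ is invertible, which is the regularity of $H_s=H_r+(s-r)V;$ moreover $A^{(r)}$ is compact, so $\sigma(A^{(r)})$ is discrete away from $0.$ Thus $g_{sr}$ is holomorphic on a neighbourhood of $\sigma(A^{(r)}).$

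Given this, the argument is essentially immediate. Let $e$ be any function holomorphic on a neighbourhood of $\sigma(A^{(s)})$ with $e\equiv 1$ near the isolated eigenvalue $1/s$ and $e\equiv 0$ near the remainder of $\sigma(A^{(s)});$ then $e(A^{(s)})$ coincides with the Riesz projection $P_s$ defined by (\ref{F: P(l+i0)=...}). By the composition rule of the holomorphic calculus, $e(A^{(s)}) = (e\circ g_{sr})(A^{(r)}).$ The preimage equation $g_{sr}(\mu)=1/s$ forces $\mu=1/r,$ so $e\circ g_{sr}$ equals $1$ near $1/r$ and $0$ near the rest of $\sigma(A^{(r)}),$ meaning that $(e\circ g_{sr})(A^{(r)})$ is the Riesz projection of $A^{(r)}$ at $1/r.$ Hence $P_s=P_r,$ which is the desired independence of $s.$

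The main obstacle I would anticipate is justifying that the functional calculus applies cleanly at the boundary value $z=\lambda+i0$ and not merely for non-real $z.$ Compactness of $A^{(r)},$ inherited from the uniform-norm convergence of $T_{\lambda+iy}(H_r)$ as $y\downarrow 0$ (a defining property of $\Lambda(H_r,F)$), delivers the discrete spectral structure, while the characterization of resonance points recalled in the Introduction handles the invertibility of $1+(s-r)A^{(r)}.$ If one prefers to bypass functional calculus altogether, the same conclusion follows from the explicit substitution $t=g_{sr}(\mu)$ in the contour integral defining $P_s:$ the identity
$$
  t-A^{(s)} \;=\; \bigl(1+(s-r)A^{(r)}\bigr)^{-1}\bigl(t+(t(s-r)-1)A^{(r)}\bigr)
$$
can be inverted directly, and after the change of variables the integrand reduces (the Jacobian $g_{sr}'(\mu)$ being absorbed by the surrounding factors) to $(\mu-A^{(r)})^{-1}d\mu$ on a small loop $\Gamma$ around $1/r,$ giving $P_r.$
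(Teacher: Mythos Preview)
Your argument is correct, but it proceeds along a genuinely different route from the paper's. The paper first invokes Corollary~\ref{C: B} to conclude that the two Riesz idempotents $P_1,P_2$ (for parameters $s_1,s_2$) share the same range $\Upsilon(\lambda+i0;H_0,V)$, whence $P_1P_2=P_2$ and $P_2P_1=P_1$; it then notes that the identity (\ref{F: A(-s)z=...}) forces $A^{(s_1)}$ and $A^{(s_2)}$ to commute, hence so do $P_1$ and $P_2$, and the chain $P_1=P_2P_1=P_1P_2=P_2$ finishes. Your approach bypasses Corollary~\ref{C: B} entirely: you observe that $A^{(s)}=g_{sr}(A^{(r)})$ with $g_{sr}$ a M\"obius map holomorphic near $\sigma(A^{(r)})$, and then use the composition rule $e(A^{(s)})=(e\circ g_{sr})(A^{(r)})$ of the Riesz--Dunford calculus together with the bijectivity of $g_{sr}$ (so that the unique preimage of $1/s$ is $1/r$). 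This is self-contained and in fact yields Corollary~\ref{C: B} as a by-product, since equal Riesz projections have equal ranges; the paper's version is shorter only because it cashes in the earlier root-space analysis. Both arguments ultimately rest on the same structural fact, namely (\ref{F: A(-s)z=...}), but exploit it differently: the paper extracts commutativity, while you extract a functional relation.
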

\begin{proof} Let $P_1$ and $P_2$ be two idempotents for two different values $s_1$ and $s_2$ of $s.$
Since, by Corollary \ref{C: B}, these idempotents have the same range $\Upsilon(\lambda+i0; H_0,V),$
we have $P_1P_2 = P_2$ and $P_2P_1 = P_1.$ Since, by (\ref{F: A(-s)z=...}), operators $T_{\lambda+i0}(H_{s_1})J$ and
$T_{\lambda+i0}(H_{s_2})J$ commute, it follows from (\ref{F: P(l+i0)=...}) that $P_1$ and $P_2$ also commute.
It follows that $P_1 = P_2P_1 = P_1P_2 = P_2.$
\end{proof}

%

The following proposition asserts a natural and expected property of the resonance index.

\begin{prop} If~$\lambda$ is an essentially regular point, if $H_0$ is a resonant at~$\lambda$ operator
and if $V$ is a regularizing direction, then
$$
  \ind_{res}(\lambda; H,-V) = - \ind_{res}(\lambda; H,V).
$$
\end{prop}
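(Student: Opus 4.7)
The plan is to exploit the symmetry $s\mapsto -s$ induced by the sign-change $V\mapsto -V$. If $V=F^*JF$, then the defining function in the direction $-V$ is, for any regular $H_0$ on the affine line $\{H+tV:t\in\mbR\}$,
$$
  f_z(s;H_0,-V) = (1+sT_z(H_0)(-J))^{-1} = (1-sT_z(H_0)J)^{-1} = f_z(-s;H_0,V).
$$
Thus the holomorphic change of variable $s\mapsto -s$ identifies poles of $f_z(\cdot;H_0,V)$ bijectively, and with multiplicities, with poles of $f_z(\cdot;H_0,-V)$.

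First I would note that $-V$ is automatically a regularizing direction for $H$: the line $\{H+tV\}$ coincides set-theoretically with $\{H+t(-V)\}$, and regularity at~$\lambda$ is a property of the operator alone, so any regularizing $rV$ for $H$ equals $(-r)(-V)$, which regularizes $H$ in the direction $-V$. This lets us pick the \emph{same} regular $H_0$ as reference in both parameterizations. If $H=H_0+r_0V$ in the $V$-parameterization, then $H=H_0+(-r_0)(-V)$ in the $(-V)$-parameterization; consequently the real pole $s=r_0$ of $f_{\lambda+i0}(\cdot;H_0,V)$ corresponds under $s\mapsto -s$ to the real pole $s=-r_0$ of $f_{\lambda+i0}(\cdot;H_0,-V)$, which is precisely the pole used to compute $\ind_{res}(\lambda;H,-V)$.

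Next I would track the group of non-real poles. Shifting $z$ from $\lambda+i0$ to $\lambda+iy$ with small $y>0$, the pole at $r_0$ for direction $V$ splits into $N_+$ poles in $\mbC_+$ and $N_-$ poles in $\mbC_-$. Under $s\mapsto -s$ these become, respectively, $N_+$ poles in $\mbC_-$ and $N_-$ poles in $\mbC_+$ for $f_{\lambda+iy}(\cdot;H_0,-V)$ in the group of $-r_0$. So the numbers $N'_\pm$ entering $\ind_{res}(\lambda;H,-V)$ satisfy $N'_+=N_-$ and $N'_-=N_+$, giving
$$
  \ind_{res}(\lambda;H,-V) = N'_+-N'_- = N_--N_+ = -\ind_{res}(\lambda;H,V).
$$

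There is essentially no obstacle here; the only thing worth double-checking is that the multiplicities are preserved by the substitution $s\mapsto -s$, which is immediate from the characterization (\ref{F: a pole iff an eigenvalue}) of pole multiplicity as the algebraic multiplicity of $-1/s_0$ as an eigenvalue of $T_z(H_0)J$: replacing $J$ by $-J$ replaces the eigenvalue $-1/s_0$ by $-1/(-s_0) = 1/s_0$ with the same root space and hence the same algebraic multiplicity.
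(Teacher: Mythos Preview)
Your proof is correct and rests on the same idea as the paper's: the symmetry $s\mapsto -s$ induced by $J\mapsto -J$ swaps the upper and lower half-planes, hence swaps $N_+$ and $N_-$. The only difference is bookkeeping: you work with the first formula $f_z(s;H_0,V)=(1+sT_z(H_0)J)^{-1}$ at a fixed regular reference $H_0$, which makes the identity $f_z(s;H_0,-V)=f_z(-s;H_0,V)$ immediate, whereas the paper works with the second formula $f_z(s)=1-sT_z(H_s)J$ and therefore has to invoke Proposition~\ref{P: A} to pass from $r$ to $-r$; your route is marginally more direct for this particular statement.
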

\begin{proof} A resonant operator $H_0$ can be crossed in two directions along the line $\set{H_r, r \in \mbR}:$ $V$ and $-V.$
In one direction the resonance equation is
($r$ can be complex, but we write $r+\alpha_z$ with real $r$ and complex $\alpha_z$ instead of complex $r$)
\begin{equation} \label{F: up=0}
  (1-(r+\alpha_z)T_{z}(H_{r})J)^n\psi = 0.
\end{equation}
If the resonant operator $H_0$ is now to be crossed in the opposite direction $-V,$ then we have to replace $J$ by $-J$
in the resonance equation to get
$$
  (1+(r+\alpha_z)T_{z}(H_{-r})J)^n\psi = 0.
$$
Clearly, the number $r$ can be chosen so that both $r$ and $-r$ are not resonant.
By Proposition \ref{P: A}, in the last equality we can replace $r$ by $-r$ to get
$$
  (1-(r-\alpha_z)T_{z}(H_{r})J)\psi = 0.
$$
Comparing this with (\ref{F: up=0}) we see that if $r+\alpha_z$ is an up-pole (respectively, down-pole) for the second case, then
$r-\alpha_z$ is a down-pole (respectively, up-pole) for the first case and obviously vice versa.
That is, up-poles (down-poles) in the direction $V$ become down-poles (up-poles) in the direction $-V.$
Clearly, multiplicities of these poles do not change. So, overall the resonance index changes its sign.
This completes the proof.
\end{proof}

\section{Resonance index and singular spectral shift function}
In this section we prove the main result of this paper, which states that
the jump of the singular part $\xis(\lambda; H_r,H_0)$ of the spectral shift function
(considered as a function of the coupling constant $r$ under fixed essentially regular point~$\lambda$)
at a resonance point $H_{r_0}$
of a path $H_r = H_0 + rV$ is equal to the resonance index of the triple $(\lambda, H_{r_0},V).$
\subsection{Resonance index as a residue of a meromorphic function}
For a straight path $H_r = H_0 + rV,$
let us consider the function
$$
  F_z(s) = F_z(s;H_0,V) = \Tr\brs{\frac 1\pi \Im R_z(H_s)V}.
$$
For further use, we note the following equality (see e.g. \cite[(4.8)]{Az3v5})
\begin{equation} \label{F: Fz(s)=...}
  \frac 1\pi \Im R_z(H_s) = \brs{1 + s R_{\bar z}(H_0)V}^{-1} \frac 1\pi \Im R_z(H_0) \brs{1 + sV R_z(H_0)}^{-1}.
\end{equation}

\begin{lemma} \label{L: Im RV=...} The equality
$$
  \frac 1\pi \Im R_z(H_s)V = \frac 1{2\pi i} \cdot \frac 1s \SqBrs{\brs{1 + sR_{\bar z}(H_0)V}^{-1} - \brs{1 + s R_z(H_0)V}^{-1}}.
$$
holds.
\end{lemma}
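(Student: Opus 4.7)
The plan is to reduce everything to a one-line manipulation of the second resolvent identity, without invoking the full two-sided factorization (\ref{F: Fz(s)=...}). The starting point is the identity $\Im R_z(H_s) = \frac{1}{2i}\bigl(R_z(H_s) - R_{\bar z}(H_s)\bigr)$, so the left-hand side equals
\[
  \frac{1}{2\pi i}\bigl(R_z(H_s)V - R_{\bar z}(H_s)V\bigr),
\]
and the entire problem is to express $R_z(H_s)V$ in the form $\frac{1}{s}[1 - (1 + sR_z(H_0)V)^{-1}]$, and similarly for $\bar z$.

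To obtain that expression, I would first apply the second resolvent identity $R_z(H_s) = R_z(H_0) - s R_z(H_0) V R_z(H_s)$, rearrange it to $(1 + s R_z(H_0) V) R_z(H_s) = R_z(H_0)$, and multiply on the right by $V$ to get
\[
  R_z(H_s)V = (1 + s R_z(H_0) V)^{-1} R_z(H_0) V.
\]
Then, writing $A = s R_z(H_0) V$ and using the trivial algebraic identity $(1+A)^{-1} A = 1 - (1+A)^{-1}$, I obtain
\[
  R_z(H_s)V = \frac{1}{s}\bigl[1 - (1 + s R_z(H_0) V)^{-1}\bigr].
\]
The same computation with $z$ replaced by $\bar z$ yields the analogous formula for $R_{\bar z}(H_s)V$. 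Substituting both into the expression for $\frac{1}{\pi}\Im R_z(H_s)V$ above causes the constant $1$'s to cancel, leaving exactly the claimed formula.

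There is no real obstacle here — the argument is essentially a direct algebraic manipulation. The only point requiring attention is the invertibility of $1 + sR_z(H_0)V$ and $1 + sR_{\bar z}(H_0)V$, which is guaranteed because $z \notin \mathbb{R}$ (see \cite[Lemma 4.1.4]{Az3v5}, already invoked immediately after equation (\ref{F: def of fz(s)})), so the manipulations above are legitimate as operator identities. This lemma will presumably be used in the next subsection to rewrite the residue of $F_z(s)$ at a real pole $s = r_0$ as $\lambda + i0$ is approached, thereby connecting the resonance index to the trace expression $\Tr(T_{\lambda+i0}(H_s)J)$ announced in the introduction.
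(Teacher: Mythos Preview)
Your proof is correct and is in fact a cleaner route than the one in the paper. The paper starts from the two-sided factorization (\ref{F: Fz(s)=...}),
\[
  \tfrac{1}{\pi}\Im R_z(H_s)V = (1+sR_{\bar z}(H_0)V)^{-1}\,\tfrac{1}{\pi}\Im R_z(H_0)\,(1+sVR_z(H_0))^{-1}V,
\]
expands $\Im R_z(H_0)$ as a difference of resolvents, and then applies the identity $(1+A)^{-1}A = 1-(1+A)^{-1}$ separately to the $R_z$-factor on the right and the $R_{\bar z}$-factor on the left of the resulting sandwich; the mixed products $(1+sR_{\bar z}(H_0)V)^{-1}(1+sR_z(H_0)V)^{-1}$ then cancel. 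You bypass the sandwich entirely: you expand $\Im R_z(H_s)$ first, express each of $R_z(H_s)V$ and $R_{\bar z}(H_s)V$ directly via the second resolvent identity as $\tfrac{1}{s}[1-(1+sR_w(H_0)V)^{-1}]$, and subtract. Both arguments rest on the same algebraic identity, but yours is shorter and avoids quoting (\ref{F: Fz(s)=...}); the paper's route is natural only because that factorization is already on hand from \cite{Az3v5}. Your remark on invertibility for $z\notin\mathbb{R}$ is the right justification, and your guess about the downstream use (computing residues of $F_z(s)$) is accurate.
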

\begin{proof} By (\ref{F: Fz(s)=...}) we have
$$
  (E) := \frac 1\pi \Im R_z(H_s)V = \brs{1 + s R_{\bar z}(H_0)V}^{-1} \frac 1\pi \Im R_z(H_0) \brs{1 + sV R_z(H_0)}^{-1}V.
$$
It follows that
\begin{equation*}
  \begin{split}
   2\pi i (E) & = \brs{1 + s R_{\bar z}(H_0)V}^{-1} \Brs{R_{z}(H_0) - R_{\bar z}(H_0)} V \brs{1 + s R_z(H_0)V}^{-1}
    \\ & = \brs{1 + s R_{\bar z}(H_0)V}^{-1} R_z(H_0)V \brs{1 + s R_z(H_0)V}^{-1}
    \\ & \qquad - \brs{1 + s R_{\bar z}(H_0)V}^{-1} R_{\bar z}(H_0)V \brs{1 + sR_z(H_0)V}^{-1}
    \\ & = \frac 1s \brs{1 + s R_{\bar z}(H_0)V}^{-1} \SqBrs{1 - \brs{1 + sR_z(H_0)V}^{-1}}
    \\ & \qquad - \frac 1s \SqBrs{1 - \brs{1 + s R_{\bar z}(H_0)V}^{-1}} \brs{1 + sR_z(H_0)V}^{-1}
    \\ & = \frac 1s \SqBrs{\brs{1 + sR_{\bar z}(H_0)V}^{-1} - \brs{1 + s R_z(H_0)V}^{-1}}.
  \end{split}
\end{equation*}
The proof is complete.
\end{proof}

\begin{cor} \label{C: Fz(s)=Tr[(1+RV)(-1)-...]} The equality
$$
  F_z(s) = \frac 1{2\pi i} \cdot \frac 1s \Tr\SqBrs{\brs{1 + sR_{\bar z}(H_0)V}^{-1} - \brs{1 + s R_z(H_0)V}^{-1}}.
$$
holds. In particular, the function $\mbR \ni s \mapsto F_z(s)$ admits a meromorphic continuation to the whole complex plane $\mbC.$
\end{cor}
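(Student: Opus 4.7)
\medskip

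The plan is to read this off directly from Lemma~\ref{L: Im RV=...}: taking the operator-trace of both sides is what produces the claimed identity, and the meromorphic continuation then comes for free from properties of the two resolvent factors.

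First I would argue that every operator appearing under the trace is trace-class for $z \in \mbC \setminus \mbR$, so that the trace makes sense and distributes over the subtraction. Since $V = F^*JF$ with $F$ Hilbert--Schmidt and $J$ bounded, $V$ is trace-class, hence so are $R_{\bar z}(H_0)V$ and $R_z(H_0)V$ for non-real $z$; consequently $\Im R_z(H_s)V$ is trace-class as well (this is already implicit in the definition of $F_z(s)$). Applying $\Tr$ to both sides of Lemma~\ref{L: Im RV=...} therefore gives
\begin{equation*}
  F_z(s) = \Tr\!\left[\tfrac{1}{\pi}\Im R_z(H_s)V\right] = \frac{1}{2\pi i}\cdot\frac{1}{s}\Tr\!\left[(1+sR_{\bar z}(H_0)V)^{-1} - (1+sR_z(H_0)V)^{-1}\right],
\end{equation*}
which is the claimed identity. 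One small point worth spelling out is that each individual operator $(1+sR_{\bar z}(H_0)V)^{-1}$ is of the form $1 + (\text{trace-class})$, hence not trace-class on its own; however their \emph{difference} is trace-class (the $1$'s cancel), so the right-hand trace is well-defined.

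For the meromorphic continuation, I would invoke the analytic Fredholm alternative applied to the entire $\clL_1$-valued functions $s \mapsto sR_{\bar z}(H_0)V$ and $s \mapsto sR_z(H_0)V$: since these operators are trace-class and depend entire-analytically on $s$, each of $(1+sR_{\bar z}(H_0)V)^{-1}$ and $(1+sR_z(H_0)V)^{-1}$ is a meromorphic $\clB(\hilb)$-valued function of $s \in \mbC$, with poles at a discrete set of $s$ (and with the difference being trace-class-valued away from the poles). The prefactor $1/s$ looks like it introduces a pole at $s=0$, but at $s=0$ both bracketed inverses equal the identity, so their difference vanishes there; expanding to first order in $s$ (i.e.\ $(1+sA)^{-1} = 1 - sA + O(s^2)$) shows that the trace of the bracket vanishes at least to first order in $s$, so the factor $1/s$ produces at worst a removable singularity at the origin. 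Hence $F_z(s)$ extends meromorphically to all of $\mbC$.

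The only genuinely delicate point is checking that the trace distributes properly and that the individually non-trace-class inverses yield a trace-class difference; once that bookkeeping is done, the corollary is a one-line consequence of the preceding lemma.
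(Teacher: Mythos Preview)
Your proof is correct and follows exactly the approach implicit in the paper: the corollary is stated without proof, as it is an immediate consequence of taking traces in Lemma~\ref{L: Im RV=...} together with the analytic Fredholm alternative. You have in fact supplied more detail than the paper does, namely the trace-class bookkeeping for the difference of inverses and the observation that the apparent singularity at $s=0$ is removable.
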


Note that since $F_z(s)$ takes real values when $s$ is real, the meromorphic continuation of $F_z(s)$ is a symmetric function:
\begin{equation} \label{F: F is symm-c}
  F_z(\bar s) = \overline{F_z(s)}.
\end{equation}

\begin{thm} \label{T: res F(s)=res.ind} Let $z \notin \mbR.$ If $s_0$ is a (necessarily non-real) pole of the meromorphic function
$$
  \mbC \ni s \mapsto F_z(s) = \Tr\brs{\frac 1\pi \Im R_z(H_s)V },
$$
then the equality
$$
  2\pi i \res_{s=s_0} F_z(s) = N_+-N_-
$$
holds, where $N_+$ (respectively, $N_-$) is the multiplicity of $-s_0^{-1}$ as an eigenvalue of $R_{z}(H_0)V$
(respectively, $R_{\bar z}(H_0)V$).
\end{thm}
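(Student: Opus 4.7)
The plan is to start from Corollary \ref{C: Fz(s)=Tr[(1+RV)(-1)-...]}, which rewrites the function as
$$F_z(s) = \frac{1}{2\pi i s}\Tr\SqBrs{\brs{1 + sR_{\bar z}(H_0)V}^{-1} - \brs{1 + s R_z(H_0)V}^{-1}},$$
and to reduce the residue computation to computing, separately, the residues at $s_0$ of $\Tr[(1+sA)^{-1}-1]$ for $A = A_z := R_z(H_0)V$ and $A = A_{\bar z} := R_{\bar z}(H_0)V.$ Since $V$ is trace class and the resolvents $R_z(H_0)$, $R_{\bar z}(H_0)$ are bounded (because $z \notin \mbR$), both $A_z$ and $A_{\bar z}$ are trace class, so $(1+sA)^{-1}-1 = -sA(1+sA)^{-1}$ is trace class and the traces are well-defined meromorphic functions of $s,$ with poles only at $s_j = -1/\lambda_j$ where $\lambda_j$ runs over the non-zero eigenvalues of $A.$ Note also that $s_0 \neq 0$: a Neumann expansion at $s=0$ gives $(1+sA_{\bar z})^{-1} - (1+sA_z)^{-1} = s(A_z - A_{\bar z}) + O(s^2),$ so the factor $1/s$ in front does not create a spurious pole at the origin.

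The core step is a local residue computation. For a trace-class operator $A$ with a non-zero eigenvalue $\lambda = -1/s_0$ of algebraic multiplicity $N,$ let $\mathcal{R}_\lambda$ be the finite-dimensional root subspace at $\lambda$ and let $N_\lambda$ denote the nilpotent part of $A$ on $\mathcal{R}_\lambda,$ so that $A\big|_{\mathcal{R}_\lambda} = \lambda I + N_\lambda.$ Using the identity $1+s\lambda = \lambda(s-s_0)$ I would write
$$(1+sA)^{-1}\Big|_{\mathcal{R}_\lambda} = \frac{1}{\lambda(s-s_0)}\sum_{j=0}^{k-1} \brs{-\frac{sN_\lambda}{\lambda(s-s_0)}}^{\!j},$$
where $k$ is the nilpotency index of $N_\lambda.$ Since $\Tr N_\lambda^j = 0$ for every $j \geq 1,$ only the $j=0$ term survives after taking the trace, yielding a simple pole at $s_0$ with residue $N/\lambda = -s_0 N.$ On the complement of $\mathcal{R}_\lambda$ the resolvent $(1+sA)^{-1}$ is holomorphic at $s_0,$ so in total $\res_{s=s_0}\Tr[(1+sA)^{-1}-1] = -s_0 N.$

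Applying this identity to $A_z$ with algebraic multiplicity $N_+$ and to $A_{\bar z}$ with multiplicity $N_-,$ subtracting, and dividing by $2\pi i s_0$ yields
$$2\pi i \res_{s=s_0} F_z(s) = \frac{1}{s_0}\SqBrs{-s_0 N_- - (-s_0 N_+)} = N_+ - N_-,$$
as claimed. I do not foresee a substantial obstacle: the whole argument rests on the finite-dimensionality of the root subspaces of a compact operator at a non-zero eigenvalue (Riesz projection theory) together with the fact that nilpotent operators have vanishing trace. A cleaner alternative is to invoke Lidskii's theorem directly, writing $\Tr[(1+sA)^{-1}-1] = -\sum_j s\lambda_j/(1+s\lambda_j)$ and computing the elementary residue $s_0$ of each summand at $s_0 = -1/\lambda_j,$ which leads to the same conclusion.
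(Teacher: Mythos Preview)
Your argument is correct and rests on the same ingredients as the paper's proof: both start from Corollary~\ref{C: Fz(s)=Tr[(1+RV)(-1)-...]} and both exploit the Riesz decomposition at the eigenvalue $-1/s_0$ together with the fact that the nilpotent part contributes nothing to the trace. The execution differs slightly: the paper rewrites $\tfrac{1}{s}(1+sA)^{-1}$ via the substitution $t=-s^{-1}$ so that the contour integral becomes exactly the Riesz projection $\tfrac{1}{2\pi i}\oint_{C_1}(t-A)^{-1}\,dt$, and then invokes Lidskii's theorem to read off $\Tr P = N_\pm$; you instead expand $(1+sA)^{-1}$ explicitly on the root space and observe that the trace has only a \emph{simple} pole (since $\Tr N_\lambda^j=0$), which lets you multiply by the holomorphic factor $1/(2\pi i s)$ evaluated at $s_0$. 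Your route makes the simple-pole structure of $\Tr[(1+sA)^{-1}]$ explicit, which is a pleasant by-product; the paper's route avoids any Laurent bookkeeping by absorbing everything into the projection formula. Your closing alternative via the Lidskii eigenvalue sum $\Tr[(1+sA)^{-1}-1]=-\sum_j s\lambda_j/(1+s\lambda_j)$ is in fact the device the paper uses later, in part~(C) of the proof of Theorem~\ref{T: a beauty theorem}.
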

\begin{proof} Let $C$ be a small circle (oriented anticlockwise) around the pole $s_0,$ such that there are no other poles of $F_z(s)$ on and inside of the circle $C$
except $s_0.$
By Corollary \ref{C: Fz(s)=Tr[(1+RV)(-1)-...]} we have
\begin{equation*}
  \begin{split}
     2 \pi i \res_{s = s_0} F_z(s) & =  \oint_C F_z(s)\,ds
    \\ & = \frac 1{2\pi i} \oint_C \Tr\brs{\frac 1s \SqBrs{\brs{1 + s R_{\bar z}(H_0)V}^{-1} - \brs{1 + s R_z(H_0)V}^{-1}}}\,ds
    \\ & = \frac 1{2\pi i} \Tr \brs{\oint_C \SqBrs{\brs{-s^{-1} - R_{z}(H_0)V}^{-1} - \brs{-s^{-1} - R_{\bar z}(H_0)V}^{-1}}s^{-2}\,ds}.
  \end{split}
\end{equation*}
Making the change of variable $t = -s^{-1}$ we get
\begin{equation*}
  \begin{split}
     2 \pi i \res_{s = s_0} F(s) & = \frac 1{2\pi i} \Tr \brs{\oint_{C_1} \SqBrs{\brs{t - R_{z}(H_0)V}^{-1} - \brs{t - R_{\bar z}(H_0)V}^{-1}}\,dt},
  \end{split}
\end{equation*}
where $C_1$ is a small circle enclosing $t_0 = -s_0^{-1}$ (oriented in anticlockwise direction).
It follows that
\begin{equation} \label{F: res=Tr(P)-Tr(P)}
  \begin{split}
     2 \pi i \res_{s = s_0} F(s) & = \Tr \brs{P(z,t_0) - P(\bar z, t_0)},
  \end{split}
\end{equation}
where $P(z,t_0)$ is the (not necessarily orthogonal) projection onto the root space of the eigenvalue $t_0=-1/s_0$ of $R_z(H_0)V.$
By the Lidskii theorem (see e.g. \cite{GK}), $\Tr(P(z,t_0))$ is equal to the multiplicity of $-s_0^{-1} = t_0$ as an eigenvalue of
$R_z(H_0)V.$ It follows from this and (\ref{F: res=Tr(P)-Tr(P)}) that $2\pi i \res_{s = s_0} F(s) = N_+-N_-.$
\end{proof}

\begin{rems} \rm The resonance index is defined as the difference between the number of up-poles
and down-poles of the function $(1+sT_z(H_0)J)^{-1}.$ Up-poles (respectively, down-poles) of this function are conjugates of down-poles
(respectively, up-poles) of the function $(1+sT_{\bar z}(H_0)J)^{-1}.$ So, the set of poles of the function $F_z(s)$
is the union of poles of $(1+sT_z(H_0)J)^{-1}$ and $(1+sT_{\bar z}(H_0)J)^{-1},$ as it can be seen from (\ref{F: Fz(s)=...}).

\begin{picture}(400,60)
\put(30,35){\circle*{3}}
\put(30,25){\circle{3}}
\put(100,45){\circle*{3}}
\put(100,15){\circle{3}}
\put(84,42){\circle*{3}}
\put(84,18){\circle{3}}
\put(70,49){\circle{3}}
\put(70,11){\circle*{3}}
\put(10,30){\vector(1,0){120}}
\put(155,40){{\small Black dots are poles of $(1+sT_z(H_0)J)^{-1}$}}
\put(155,22){{\small White dots are poles of $(1+sT_{\bar z}(H_0)J)^{-1}$}}
\end{picture}
\\ It follows that the resonance index can also be defined as the number of black dots in a group in $\mbC_+$ minus the number of white dots in the group in $\mbC_+.$
\end{rems}

\begin{cor} \label{C: residue of real s = 0} Let $\lambda \in \mbR$ be an essentially regular point.
Let the straight line $\set{H_r = H_0+rV, r \in \mbR}$ be regular at~$\lambda.$
The residue of any real pole $s_0$ of the meromorphic function
$$
  \mbC \ni s \mapsto F_{\lambda+i0}(s) = \frac 1\pi \Tr\brs{\Im R_{\lambda+i0}(H_s)V}
$$
is equal to $0.$
\end{cor}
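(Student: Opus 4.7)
The plan is to repeat, almost verbatim, the residue computation carried out in the proof of Theorem \ref{T: res F(s)=res.ind}, only now at the boundary value $z = \lambda+i0$. Corollary \ref{C: Fz(s)=Tr[(1+RV)(-1)-...]} expresses $F_z(s)$ as
$$
  F_z(s) = \frac 1{2\pi i s}\Tr\SqBrs{\brs{1+sR_{\bar z}(H_0)V}^{-1}-\brs{1+sR_z(H_0)V}^{-1}},
$$
and essential regularity of $\lambda$ together with the assumption that the line $\set{H_r}$ is regular at $\lambda$ ensure that both resolvent expressions make sense at $z = \lambda\pm i0$ (as operators on a suitable scale space $\hilb_{-1}(F)$, or equivalently after passing to the trace-class formulation via $T_{\lambda+i0}(H_0)J$). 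Hence the contour integration around a small circle $C$ about a real pole $s_0$, followed by the change of variable $t = -s^{-1}$, still yields
$$
  2\pi i \res_{s=s_0} F_{\lambda+i0}(s) = \Tr\brs{P(\lambda+i0,t_0) - P(\lambda-i0,t_0)} = N_+ - N_-,
$$
where $t_0 = -1/s_0$ is real and $N_\pm$ is the algebraic multiplicity of $t_0$ as an eigenvalue of $R_{\lambda\pm i0}(H_0)V$.

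The remaining task is to show $N_+ = N_-$, and this is where the reality of $t_0$ is decisive. Using self-adjointness of $V$ one has
$$
  \bigl(R_{\lambda+i0}(H_0)V\bigr)^* = V R_{\lambda-i0}(H_0),
$$
and then the chain of standard facts (algebraic multiplicities of non-zero eigenvalues of $AB$ and $BA$ coincide; and for any compact operator $K$, the algebraic multiplicity of $\mu$ as eigenvalue of $K$ equals that of $\bar\mu$ as eigenvalue of $K^*$) gives
$$
  N_+ = \mathrm{mult}_{R_{\lambda+i0}V}(t_0) = \mathrm{mult}_{VR_{\lambda+i0}}(t_0) = \mathrm{mult}_{(VR_{\lambda+i0})^*}(\bar t_0) = \mathrm{mult}_{R_{\lambda-i0}V}(t_0) = N_-,
$$
since $\bar t_0 = t_0$. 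Therefore $N_+ - N_- = 0$, which is the claim.

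The one point requiring genuine care is the first step, namely the legitimacy of treating the residue computation from Theorem \ref{T: res F(s)=res.ind} at $z = \lambda+i0$: the individual operators $R_{\lambda\pm i0}(H_0)V$ are only compact on the proper scale space rather than trace class, so either one invokes the existence of the trace of the \emph{difference} (which is what the Lidskii argument actually uses), or one reduces at once to the entire trace-class function $1+sT_{\lambda+i0}(H_0)J$ whose non-zero spectrum matches that of $R_{\lambda+i0}(H_0)V$ with the same algebraic multiplicities. The rest of the argument is pure compact-operator linear algebra combined with the symmetry $V=V^*$.
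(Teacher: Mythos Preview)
Your argument is correct but follows a genuinely different route from the paper. The paper does \emph{not} work directly at $z=\lambda+i0$. Instead it perturbs to $z=\lambda+iy$ with small $y>0$, so that the single real pole $s_0$ of $F_{\lambda+i0}$ splits into a group of non-real poles of $F_{\lambda+iy}$ inside a fixed small circle $C$. Theorem \ref{T: res F(s)=res.ind} is then applied, for non-real $z$, at each pole of the group; the pairing of every pole $s_j$ of $(1+sT_z(H_0)J)^{-1}$ with its conjugate $\bar s_j$, which is a pole of $(1+sT_{\bar z}(H_0)J)^{-1}$ of the same multiplicity, makes the residues cancel in pairs, so $\oint_C F_{\lambda+iy}(s)\,ds=0$. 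The conclusion for $F_{\lambda+i0}$ follows by taking $y\to 0^+$.

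Your approach instead reruns the residue computation directly at the boundary value and then invokes the adjoint symmetry $(T_{\lambda+i0}(H_0)J)^*=JT_{\lambda-i0}(H_0)$ together with reality of $t_0=-1/s_0$ to get $N_+=N_-$. This is more direct and makes the algebraic reason for vanishing transparent, at the cost of the functional-analytic justification you flag: one must check that the difference $(t-T_{\lambda+i0}J)^{-1}-(t-T_{\lambda-i0}J)^{-1}$ is trace-class continuous on $C_1$ (it is, via $T_{\lambda+i0}-T_{\lambda-i0}=2i\,\Im T_{\lambda+i0}\in\clL_1$), so that the Riesz-projection step and Lidskii's theorem apply. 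The paper's detour through $y>0$ sidesteps this check entirely, and dovetails with the pole-splitting picture used throughout; your route is shorter once that check is granted.
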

\begin{proof} Clearly,
$$
  \res_{s=s_0} F_{\lambda+i0}(s) = \lim_{y \to 0^+} \res_{s=s_0} F_{\lambda+iy}(s).
$$
If $C$ is a small circle anticlockwise oriented and enclosing $s_0$ such that $s_0$ is the only pole of $F_{\lambda+i0}(s)$ inside and on the circle,
then for small enough $y>0$ the circle $C$ will contain only poles of $F_{\lambda+iy}(s)$ which belong to the group of $s_0.$

So, it is enough to show that for all small enough $y>0$
$$
  \oint_C F_{\lambda+iy}(s)\,ds = 0.
$$
The integral in the left hand side is equal to the sum of residues (times $2\pi i$) of all poles in the group of $s_0.$
Since 
an eigenvalue $-s_0(y)^{-1}$ of $R_{\lambda+iy}(H_0)V$ corresponds to the eigenvalue $-\bar s_0(y)^{-1}$
of $R_{\lambda-iy}(H_0)V$ of the same multiplicity, Theorem \ref{T: res F(s)=res.ind} and Definition \ref{D: res index} complete the proof.
\end{proof}

\subsection{Resonance index and singular spectral shift function}
In the proof of the following theorem we shall use the following contour of integration:
\begin{picture}(400,60)

\put(100,40){{\small $L_1$}}
\put(250,35){{\small $(z=\lambda+i0)$}}

\put(70,30){\circle*{2}}
\put(70,17){{$a$}}

\put(70,30){\vector(1,0){70}}
\put(70,30){\line(1,0){100}}
\put(190,30){\vector(1,0){25}}
\put(190,30){\line(1,0){40}}

\put(230,30){\circle*{2}}
\put(230,17){{$b$}}

\put(180,30){\circle*{3}}
\put(180,30){\oval(20,20)[t]}
\put(180,17){{$r_0$}}
\end{picture}

\begin{thm} \label{T: G(l+i0)=xia} Let~$\lambda$ be an essentially regular point.
Let $\gamma = \set{H_r=H_0+rV, r \in [a,b]}$ be a straight path with only one resonance point $r_0 \in (a,b)$ at~$\lambda.$
For $r \in [a,b] \setminus \set{r_0}$ let
$$
  G_{\lambda+i0}(r) = \int_{L_1} F_{\lambda+i0}(s) \,ds = \frac 1\pi \int_{L_1} \Tr\brs{\Im R_{\lambda+i0}(H_s)V}\,ds,
$$
where the contour $L_1$ of integration goes straight from $a$ to $r,$ but,
in case $r > r_0,$ circumvents the resonance point $r_0$ in the upper half-plane, as shown in the picture above,
in such a way that there are no other poles between the half-circle and the real axis except~$r_0.$
Then the function $G_{\lambda+i0}(s)$ admits a single-valued analytic continuation to a neighbourhood of the interval $[a,b].$
Restriction of this analytic continuation to the interval $[a,b]$ coincides with the value of the absolutely continuous part of the spectral shift function at $\lambda:$
$$
  G_{\lambda+i0}(r) = \xia_\gamma(\lambda; H_r,H_0).
$$
\end{thm}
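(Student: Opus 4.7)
The plan is to realise $G_{\lambda+i0}(r)$ and $\xia_\gamma(\lambda; H_r, H_0)$ as two functions satisfying the same initial-value problem $f'(r) = F_{\lambda+i0}(r)$, $f(a)=0$, on each component of $[a,b]\setminus\set{r_0}$, and then to use Corollary~\ref{C: residue of real s = 0} both to glue this piecewise identification across $r_0$ and to obtain the required analytic continuation of $G_{\lambda+i0}$.

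First, for $r \neq r_0$ the derivative of $G_{\lambda+i0}$ with respect to the moving endpoint of $L_1$ is simply $F_{\lambda+i0}(r)$. For $\xia_\gamma$, I would use its representation as the $\xi$-invariant of the path of scattering matrices $r\mapsto S(\lambda; H_r, H_0)$, which combined with the stationary formula yields
$$
  \xia_\gamma(\lambda; H_r, H_0) = \frac{1}{\pi}\arg\det\bigl(1 + r T_{\lambda+i0}(H_0)J\bigr)
$$
in a continuous branch; differentiating via the identity $\frac{d}{dr}\log\det(1+rT_{\lambda+i0}(H_0)J) = \Tr[V R_{\lambda+i0}(H_r)]$ together with $F_{\lambda+i0}(r)=\frac{1}{\pi}\Im\Tr[V R_{\lambda+i0}(H_r)]$ gives $(\xia_\gamma)'(r)=F_{\lambda+i0}(r)$ at every non-resonant $r$; these are essentially the computations of \cite{Az3v5}. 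Since both $G_{\lambda+i0}$ and $\xia_\gamma$ vanish at $r=a$ (the contour is empty, $\gamma$ starts at $H_a$), they agree on $[a,r_0)$.

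To pass across $r_0$, I would invoke Corollary~\ref{C: residue of real s = 0}: the residue of $F_{\lambda+i0}(s)$ at the real pole $r_0$ vanishes, so by Cauchy's theorem $L_1$ may be deformed from the detour above $r_0$ to one below without changing $G_{\lambda+i0}(r)$. Hence $G_{\lambda+i0}$ is a single-valued holomorphic function on a punctured complex neighbourhood of $r_0$. Combined with the real-analyticity of $\xia_\gamma$ at $r_0$ (Proposition 8.2.3 of \cite{Az3v5}) and the agreement already established on $[a,r_0)$, the identity theorem supplies both the analytic continuation of $G_{\lambda+i0}$ to a neighbourhood of $[a,b]$ and the equality $G_{\lambda+i0}(r) = \xia_\gamma(\lambda; H_r,H_0)$ throughout.

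The hard part is showing that $r_0$ is a removable singularity of $G_{\lambda+i0}$, not merely a branch point with trivial monodromy: vanishing residue yields single-valuedness around $r_0$, but higher-order Laurent coefficients of $F_{\lambda+i0}$ at $r_0$ could a priori make the half-circle portion of $L_1$ contribute a divergent term as $r\to r_0$. Resolving this requires tracking the coalescence, as $y\to 0^+$, of the upper- and lower-half-plane pole groups of $F_{\lambda+iy}$ at $r_0$: each Laurent coefficient of the limit function arises from a symmetric pair whose contributions cancel, so that $F_{\lambda+i0}$ is in fact regular at $r_0$. Alternatively, once equality on $[a,r_0)$ is in hand, one can appeal a posteriori to the analyticity of $\xia_\gamma$ to conclude that $G_{\lambda+i0}$ extends analytically across $r_0$.
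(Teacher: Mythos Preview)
Your argument follows the paper's: establish $(\xia_\gamma)'(r)=G'_{\lambda+i0}(r)=F_{\lambda+i0}(r)$ on $[a,r_0)$ with matching initial values, then use vanishing residue (Corollary~\ref{C: residue of real s = 0}) for single-valuedness and the known analyticity of $\xia_\gamma$ from \cite{Az3v5} to extend across $r_0$ by the identity theorem. Two remarks are worth making. First, the formula $\xia_\gamma=\tfrac{1}{\pi}\arg\det(1+rT_{\lambda+i0}(H_0)J)$ is not directly available in this framework: $T_{\lambda+i0}(H_0)$ is only guaranteed to exist as a uniform-norm limit (trace-class convergence is assumed for the imaginary part alone), so the Fredholm determinant need not make sense; the paper instead obtains $(\xia_\gamma)'(r)=F_{\lambda+i0}(r)$ directly from the identity $\Phia_{H_r}(V)(\lambda)=\tfrac{1}{\pi}\Tr(\Im R_{\lambda+i0}(H_r)V)$, using \cite[(8.6) and (5.5)]{Az3v5}. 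Second, your suggestion that symmetric coalescence of the pole groups forces $F_{\lambda+i0}$ itself to be regular at $r_0$ is neither justified (higher-order Laurent terms need not cancel when the eigenvalue $1/r_0$ has nontrivial Jordan structure) nor needed: the a~posteriori route through analyticity of $\xia_\gamma$---your alternative, and exactly the paper's argument---already yields removability of the singularity of $G_{\lambda+i0}$.
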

\begin{proof} 
By Corollary \ref{C: residue of real s = 0}, the integral of the function $F_{\lambda+i0}(s)$
over a small circle around $r_0$ is equal to zero.
It follows that $G_{\lambda+i0}(s)$
admits a single-valued analytic continuation to a neighbourhood of $[a,b]$ with only possible singularity at~$r_0.$

Further, for real $r < r_0,$ the function $G_{\lambda+i0}(r)$ coincides with $\xia(\lambda; H_r,H_0),$ since for all such $r$
\begin{equation*}
  \begin{split}
    \Phia_{H_r}(V)(\lambda) & = \Tr(\euE_\lambda(H_r) V \euE_\lambda^\diamondsuit(H_r))
    \\ & = \Tr(\euE_\lambda^\diamondsuit(H_r) \euE_\lambda(H_r)V)
    \\ & = \frac 1\pi \Tr (\Im R_{\lambda+i0}(H_r)V),
  \end{split}
\end{equation*}
where definition \cite[(8.6)]{Az3v5} of $\Phia_{H_r}(V)(\lambda)$ and \cite[(5.5)]{Az3v5} were used.
Since by \cite[Proposition 9.10]{Az3v5}
the function $r \mapsto \xia(\lambda; H_r,H_0)$
admits analytic continuation to a neighbourhood of $[a,b],$ it follows that so does $G_{\lambda+i0}(s)$ and that they coincide.
\end{proof}

Note that since $\xia$ and $\xi$ are pointwise path-additive, the assumption
that there is only one resonant point is not essential at all.

In the proof of the following theorem (which is the main result of this paper)
we shall need the following two contours of integration.

\begin{picture}(400,60)

\put(200,35){{\small $(z=\lambda+iy, \ 0<y\ll 1.)$}}

\put(60,40){$L_1$}

\put(60,10){$L_2$}

\put(20,28){\line(1,0){160}}
\put(20,28){\vector(1,0){50}}

\put(20,29){\circle*{2}}
\put(20,15){{$a$}}

\put(20,30){\vector(1,0){70}}
\put(20,30){\line(1,0){95}}
\put(145,30){\vector(1,0){25}}
\put(145,30){\line(1,0){35}}

\put(180,29){\circle*{2}}
\put(180,15){{$b$}}

\put(125,34){\circle*{3}}
\put(133,38){\circle{3}}
\put(136,32){\circle*{3}}
\put(130,28){\circle*{2}}
\put(130,30){\oval(30,30)[t]}
\put(130,15){{$r_0$}}

\end{picture}

\begin{thm} \label{T: Phis = res-index} Let~$\lambda$ be an essentially regular point. Let $H_r=H_0+rV,$ $a \leq r \leq b,$
and let $r_0$ be a resonance point of this path at~$\lambda.$
The jump of the singular part of the spectral shift function at a resonance point $r_0$
is equal to the resonance index of $H_{r_0}.$ In other words,
$$
  \Phis_{H_{r_0}}(V)(\lambda) = \ind_{res}(\lambda; H_{r_0}, V) \cdot \delta.
$$
\end{thm}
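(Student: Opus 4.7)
The strategy is to interpret the resonance index as a sum of residues via Theorem~\ref{T: res F(s)=res.ind} and to match this with the jump of $\xis$ through a comparison of two contour integrals of the meromorphic function $F_z(s) = \frac 1\pi \Tr\brs{\Im R_z(H_s)V}$, along the $L_1$-contour (upper detour over $r_0$) and the $L_2$-contour (real axis). Theorem~\ref{T: G(l+i0)=xia} already identifies the $y \to 0^+$ limit of $\int_{L_1} F_{\lambda+iy}(s)\,ds$ with $\xia$; I expect the analogous integral over $L_2$ to yield the full $\xi$ via the perturbation determinant, and the residues picked up as one deforms $L_1$ onto $L_2$ are exactly those computed in Theorem~\ref{T: res F(s)=res.ind}.

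Concretely, I would shrink $[a,b]$ so that $r_0$ is the unique resonance point at $\lambda$ in it, then fix $y>0$ small enough that every pole of $F_{\lambda+iy}(s)$ in the group of $r_0$ lies strictly inside the upper semi-disk bounded by $L_1$ and $L_2$. For such $y$ the function $F_{\lambda+iy}$ is meromorphic with no real poles, so the residue theorem yields
\begin{equation*}
  \int_{L_2} F_{\lambda+iy}(s)\,ds - \int_{L_1} F_{\lambda+iy}(s)\,ds = 2\pi i \sum_{s_k} \res_{s=s_k} F_{\lambda+iy}(s),
\end{equation*}
the sum running over poles $s_k$ in the upper semi-disk. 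By Theorem~\ref{T: res F(s)=res.ind} each residue contributes $+1$ per unit multiplicity of $-1/s_k$ as an eigenvalue of $R_{\lambda+iy}(H_0)V$ (a ``black dot'') and $-1$ per unit multiplicity as an eigenvalue of $R_{\lambda-iy}(H_0)V$ (a ``white dot''); by the remark following that theorem the total equals $\ind_{res}(\lambda; H_{r_0}, V)$.

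The last step is to send $y \to 0^+$. Since $L_1$ keeps a uniform distance from the migrating poles, $\int_{L_1} F_{\lambda+iy}(s)\,ds \to \int_{L_1} F_{\lambda+i0}(s)\,ds,$ which by Theorem~\ref{T: G(l+i0)=xia} equals $\xia_\gamma(\lambda; H_b, H_0) - \xia_\gamma(\lambda; H_a, H_0)$. On $L_2$ I would use the identity $F_z(s) = \frac{d}{ds}\brs{\frac 1\pi \Im \log\det\brs{1+sVR_z(H_0)}},$ so $\int_a^b F_{\lambda+iy}(s)\,ds$ is a difference of $\frac 1\pi \Im \log\det\brs{1+sVR_{\lambda+iy}(H_0)}$ at $s=b$ and $s=a$; the Krein-type boundary value of this expression at the essentially regular $\lambda$ is $\xi(\lambda; H_s, H_0)$, so the limit is $\xi(\lambda; H_b, H_0) - \xi(\lambda; H_a, H_0)$. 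Subtracting gives $\xis_\gamma(\lambda; H_b, H_0) - \xis_\gamma(\lambda; H_a, H_0) = \ind_{res}(\lambda; H_{r_0}, V),$ and since $\xis$ is locally constant on $[a, r_0)$ and on $(r_0, b]$ this difference is precisely the jump of $\xis$ at $r_0,$ which is the $\Phis$-formulation in the statement.

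The main technical obstacle is the $L_2$-limit at the individual essentially regular $\lambda$: as $y \to 0^+$ the poles of $F_{\lambda+iy}$ collapse onto the real axis near $r_0,$ so one must control the continuous branch of the perturbation determinant $\det(1+sVR_{\lambda+iy}(H_0))$ uniformly in $s$ on compact subsets of $[a,b]\setminus\set{r_0}$ and absorb the remaining neighbourhood of $r_0$ through the $L_1$-comparison. This is where one genuinely uses the pointwise boundary-value structure built into $\Lambda(\clA,F)$ and the coupling-constant regularity of the path $\set{H_r}$ from \cite{Az3v5}, rather than the merely a.e.\ statements available from classical Krein theory.
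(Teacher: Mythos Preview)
Your proposal is correct and follows essentially the same route as the paper: compare $\int_{L_1}F_{\lambda+iy}$ and $\int_{L_2}F_{\lambda+iy}$, identify the difference with the resonance index via Theorem~\ref{T: res F(s)=res.ind} (and the black/white-dot remark), then let $y\to 0^+$ so that the $L_1$-integral gives $\xia$ by Theorem~\ref{T: G(l+i0)=xia} and the $L_2$-integral gives $\xi$. The only cosmetic difference is that where you write out the perturbation-determinant identity $F_z(s)=\frac{d}{ds}\frac{1}{\pi}\Im\log\det(1+sVR_z(H_0))$ and invoke the pointwise boundary-value structure of $\Lambda(\clA,F)$ for the $L_2$-limit, the paper simply cites \cite[Proposition~9.5.4, Lemma~9.5.2, Theorem~9.6.1]{Az3v5} for the same facts.
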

\begin{proof} Since $\xi$ and $\xia$ are pointwise path additive, we can assume that $r_0$ is the only resonance point of the path $\set{H_r}.$
For the same reason, the number $a$ can be replaced by any number between $a$ and $r_0,$ and the number $b$ can be
replaced by any number between $r_0$ and $b.$

When $\lambda +i0$ is perturbed to $\lambda+iy$ with small $y>0,$ the pole $s=r_0$ of the function $F_{\lambda+i0}(s)$
get off the real axis and, in general, splits into a group of poles in $\mbC_\pm$ near~$r_0.$
(Since the function $F_z(s)$ is symmetric for all $z,$ the set of poles of this function is symmetric with respect to the real axis, but we need only those
which lie in $\mbC_+$ because of the choice of the path of integration which circumvents the poles from above).
Let $L_2$ be a straight line which connects $a$ and $b$ and let $L_1$ be a line which also connects $a$ and $b$
but circumvents the poles of the $r_0$-group from above, as shown in the picture above. By \cite[Lemma 4.1.4]{Az3v5},
the contours $L_1$ and $L_2$ do not pass through the poles of the function $F_{\lambda+iy}(\cdot).$

We have the equality
$$
  \int_{L_2} F_{\lambda+iy}(s) \,ds = \int_{L_1} F_{\lambda+iy}(s) \,ds + I(\lambda+iy),
$$
where $I(\lambda+iy)$ is the integral over the half-circle which encloses all the poles of the group in $\mbC_+.$
By Theorem \ref{T: res F(s)=res.ind}, this integral is independent of small enough $y>0,$
and it is equal to the resonance index of $H_{r_0}.$ Clearly,
$$
  \lim_{y \to 0^+} \int_{L_1} F_{\lambda+iy}(s) \,ds = \int_{L_1} F_{\lambda+i0}(s) \,ds,
$$
which, by Theorem \ref{T: G(l+i0)=xia}, is equal to $\xia(\lambda; H_b,H_a).$
On the other hand, by \cite[Proposition 9.5.4]{Az3v5}, the integral over $L_2$ is equal to the smoothed spectral shift function
$\xi(\lambda+iy; H_b,H_a).$ So, by  \cite[Lemma 9.5.2]{Az3v5} and \cite[Theorem 9.6.1/Definition 9.6.2]{Az3v5},
the integral over $L_2$ converges to $\xi(\lambda+i0; H_b,H_a) = \xi(\lambda; H_b,H_a)$ as $y \to 0.$ Hence, for any $a<r_0$ and any $b>r_0$
$$
  \xi(\lambda; H_b,H_a) = \xia_\gamma(\lambda; H_b,H_a) + \ind_{res}(\lambda; H_{r_0},V).
$$
This completes the proof.
\end{proof}
If there are several resonance points $r_1, r_2, \ldots, r_N$ in the interval $[a,b],$ then
$$
  \xi(\lambda; H_b,H_a) = \xia_\gamma(\lambda; H_b,H_a) + \sum_{j=1}^N\ind_{res}(\lambda; H_{r_j},V).
$$
That is,
$$
  \xis(\lambda; H_b,H_a) = \sum_{j=1}^N\ind_{res}(\lambda; H_{r_j},V).
$$

\begin{rems} \rm It is known that outside of the essential spectrum the spectral flow
(as defined by J.\,Phillips in \cite{Ph96CMB,Ph97FIC}) and the Lifshits-Krein spectral shift functions
coincide, see e.g. \cite{ACS,ACDS}. Theorem \ref{T: Phis = res-index} shows that the resonance index coincides with spectral flow
outside the essential spectrum, but unlike the spectral flow it also makes sense inside the essential spectrum. As such,
the resonance index can be considered as a proper generalization of spectral flow to the essential spectrum.
\end{rems}

\subsection{Large coupling constant limit}
Let~$\lambda$ be an essentially regular point, let $H_0 \in \clA$ and let $V$ be a regularizing direction
of finite rank.
In this subsection we show that the limit
$$
  \lim_{r \to \infty} \xi(\lambda; H_r, H_{-r})
$$
exists and is equal to the signature of the operator~$V.$ Since a.e.~$\lambda$ is regular for $H_0,$
it will follow that the limit
$$
  \lim_{r \to \infty} \xi(\cdot; H_r, H_{-r})
$$
exists a.e. and is constant. This will give a new proof (to the best of my knowledge) of a well-known fact
that the spectral shift function is constant at large coupling constant limit (see \cite{SimTrId2} and references in it).
Also, one has to note that this assertion is proved in the literature for the case
of one-dimensional perturbations (to the best of my knowledge),
and its generalization to the case of finite-rank perturbations seems to be not straightforward.

\begin{thm} \label{T: a beauty theorem} Let~$\lambda$ be an essentially regular point of the pair $(\clA,F).$
Let $H_0$ be a self-adjoint operator and let $V = F^*JF \in \clA(F)$ be a self-adjoint finite-rank operator such that
the line $\set{H_r = H_0+rV}$ does not lie in the resonance set $R(\lambda; \clA,F).$
Then the limit
$$
  \lim_{r \to \infty} \xi(\lambda; H_r, H_{-r})
$$
exists and is equal to the signature of the operator~$V.$
\end{thm}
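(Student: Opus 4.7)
The plan is to use Theorem \ref{T: Phis = res-index} (the main theorem of the paper) to split $\xi(\lambda;H_r,H_{-r})$ into an analytic part and a sum of resonance indices, compute the $r\to\infty$ limit of each using the finite-rank structure of $V$, and combine them via a signature identity. By Theorem \ref{T: Phis = res-index} summed over every resonance point in $(-r,r)$,
$$\xi(\lambda;H_r,H_{-r}) = \xi^{a}_{\gamma}(\lambda;H_r,H_{-r}) + \sum_{r_k \in (-r,r)} \ind_{res}(\lambda;H_{r_k},V).$$
Since $V$ has finite rank $n$, the operator $R_z(H)V$ has rank at most $n$ for any regular $H$ on the line, so the line contains at most $n$ resonance points; for $r$ exceeding the largest $|r_k|$, the singular part $\xi^{s}(\lambda;H_r,H_{-r})$ stabilises to the fixed integer $\Sigma_{res}:=\sum_{k}\ind_{res}(\lambda;H_{r_k},V)$, and the task reduces to computing $\lim_{r\to\infty}\xi^{a}_{\gamma}(\lambda;H_r,H_{-r})$ and recognising it as $\sigma(V)-\Sigma_{res}$.

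For the analytic part, Theorem \ref{T: G(l+i0)=xia} yields $\xi^{a}_{\gamma}(\lambda;H_r,H_{-r})=\int_{L_{1}(-r,r)}F_{\lambda+i0}(s)\,ds$. Let $t_{1}(z),\ldots,t_{n}(z)$ denote the nonzero eigenvalues (with algebraic multiplicity) of $R_z(H)V$ for a fixed regular $H$ on the line. Using Corollary \ref{C: Fz(s)=Tr[(1+RV)(-1)-...]} together with $t_{j}(\lambda-i0)=\overline{t_{j}(\lambda+i0)}$, the restriction of $F_{\lambda+i0}$ to the real axis becomes
$$F_{\lambda+i0}(s) = \frac{1}{\pi}\sum_{j:\,t_{j}(\lambda+i0)\notin\mbR} \frac{\Im t_{j}(\lambda+i0)}{|1+st_{j}(\lambda+i0)|^{2}},$$
which decays like $s^{-2}$ at infinity, while real poles of the meromorphic continuation have zero residue by Corollary \ref{C: residue of real s = 0}. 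Hence as $r\to\infty$ the $L_{1}$-contour integral tends to the real-axis integral, and the elementary identity $\int_{-\infty}^{\infty}|1+st|^{-2}\,ds=\pi/|\Im t|$ (valid for non-real $t$) gives
$$\lim_{r\to\infty}\xi^{a}_{\gamma}(\lambda;H_r,H_{-r}) = \sum_{j:\,t_{j}(\lambda+i0)\notin\mbR} \mathrm{sign}(\Im t_{j}(\lambda+i0)).$$

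The main obstacle is the concluding signature identity, identifying this sum with $\sigma(V)-\Sigma_{res}$. Write $V=B^{*}SB$ with $B\colon\hilb\to\mbC^{n}$ extracted from the spectral decomposition of $V$ and $S=\mathrm{diag}(\pm 1)$ a signature matrix with $p$ plus-ones and $q$ minus-ones (so $\sigma(V)=p-q$). The nonzero eigenvalues of $R_z(H)V$ coincide with those of the $n\times n$ matrix $SM(z)$ where $M(z)=BR_z(H)B^{*}$ satisfies $\Im M(\lambda+iy)>0$ strictly for $y>0$. The standard deformation $M_t=(1-t)iI+tM(\lambda+iy)$ keeps $\Im M_{t}>0$; if $SM_{t}v=\mu v$ with $\mu\in\mbR$, then $v^{*}\Im M_{t}v=(\Im\mu)(v^{*}Sv)=0$, contradicting $\Im M_{t}>0$, so no eigenvalue of $SM_{t}$ crosses the real axis during the deformation, and comparison with $SM_{0}=iS$ gives $p$ eigenvalues in $\mbC_{+}$ and $q$ in $\mbC_{-}$. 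Hence $\sum_{j}\mathrm{sign}(\Im t_{j}(\lambda+iy))=\sigma(V)$ for every $y>0$. Passing $y\to 0^{+}$, eigenvalues approaching a non-real $t_{j}(\lambda+i0)$ preserve their sign of imaginary part, while those in the group of a real $t_{k}(\lambda+i0)$ split into $N_{+}^{(k)}$ with $\Im>0$ and $N_{-}^{(k)}$ with $\Im<0$; via the correspondence $s=-1/t$ between poles of $(1+sT_{\lambda+i0}J)^{-1}$ and eigenvalues of $T_{\lambda+i0}J$, this split matches the up/down-pole split of Definition \ref{D: res index}, so $N_{+}^{(k)}-N_{-}^{(k)}=\ind_{res}(\lambda;H_{r_{k}},V)$. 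Summing,
$$\sigma(V) = \sum_{j:\,t_{j}(\lambda+i0)\notin\mbR} \mathrm{sign}(\Im t_{j}(\lambda+i0)) + \Sigma_{res},$$
which combined with the preceding paragraph gives $\lim_{r\to\infty}\xi(\lambda;H_r,H_{-r})=\sigma(V)$, as claimed.
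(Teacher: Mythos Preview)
Your proof is correct and reaches the same conclusion, but it is organised rather differently from the paper's argument. The paper works directly with a single closed contour in the coupling-constant plane: the integral of $F_{\lambda+iy}(s)$ around a large half-disc in $\mbC_+$ equals $\sigma(V)$ by Theorem~\ref{T: Krein's thm} (Krein's theorem) and the residue computation of Theorem~\ref{T: res F(s)=res.ind}; letting $y\to 0^+$ the straight segment yields $\xi(\lambda;H_R,H_{-R})$, and then the explicit eigenvalue formula from Corollary~\ref{C: Fz(s)=Tr[(1+RV)(-1)-...]} is used only to show that the half-circle contribution tends to $0$ as $R\to\infty$. You instead invoke Theorem~\ref{T: Phis = res-index} to split $\xi=\xi^{a}+\xi^{s}$ from the outset, evaluate the improper real integral for $\xi^{a}$ term-by-term via the same eigenvalue formula (obtaining $\sum_{\Im t_j\neq 0}\operatorname{sign}\Im t_j(\lambda+i0)$), and then supply your own short proof of Krein's theorem through the homotopy $M_t=(1-t)iI+tM(\lambda+iy)$ to recombine the pieces into $\sigma(V)$.

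Both routes rest on the same two ingredients --- the Lidskii/eigenvalue expression for $F_z(s)$ and the signature count of eigenvalues of $R_z(H)V$ in $\mbC_\pm$ --- so the difference is one of packaging. The paper's contour argument is slightly more economical; your version has the merit of making the limiting value of $\xi^{a}$ and the identity $\sigma(V)=\lim\xi^{a}+\Sigma_{res}$ completely explicit (and of giving a self-contained proof of the needed case of Theorem~\ref{T: Krein's thm}). One small remark: your appeal to Corollary~\ref{C: residue of real s = 0} is in fact unnecessary, since your own formula shows that every term of $F_{\lambda+i0}(s)$ with real $t_j$ vanishes identically, so $F_{\lambda+i0}$ is genuinely holomorphic at the resonance points and the $L_1$-contour coincides with the straight segment.
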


This theorem implies the following well-known (in case of $\rank(V) = 1$) result (see e.g. \cite{SimTrId2}).
\begin{cor} Let $H_0$ be a self-adjoint operator and let $V$ be a self-adjoint finite-rank operator.
For any finite interval $\Delta \subset \mbR$
the limit
$$
  \lim_{r \to \infty} \int_\Delta \xi(\lambda; H_r, H_{-r})\,d\lambda
$$
exists and is equal to $k\abs{\Delta} ,$ where
$k$ is the signature of $V$ and $\abs{\Delta}$ is the Lebesgue measure of $\Delta.$
\end{cor}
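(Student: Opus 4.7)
The plan is to reduce, via path additivity of $\xi,$ the problem to computing the limit of $\xi(\lambda;H_r,H_0)-\xi(\lambda;H_{-r},H_0)$ as $r\to\infty,$ and then to recognize this limit as the signature of $V.$ Without loss of generality $H_0$ is regular at~$\lambda,$ since the line meets the resonance set in a discrete subset of $\mbR$ (it is not contained in it). Because $V=F^*JF$ is finite-rank, both $J$ and $T_{\lambda+i0}(H_0)J$ have the same finite rank $n:=\mathrm{rank}(V),$ so the line $\set{H_r,\ r\in\mbR}$ carries only finitely many resonance points at~$\lambda.$ Theorem~\ref{T: Phis = res-index} then guarantees that $\xis(\lambda;H_r,H_{-r})$ is eventually constant in $r,$ equal to the total resonance index $\sum_{r_0}\ind_{res}(\lambda;H_{r_0},V).$

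For the absolutely continuous part I would invoke the Birman--Krein formula to write
$$
\det S(\lambda;H_r,H_0)=\frac{\det(1+rT_{\lambda-i0}(H_0)J)}{\det(1+rT_{\lambda+i0}(H_0)J)}=\prod_{j=1}^{n}\frac{1+r\overline{\mu_j}}{1+r\mu_j},
$$
where $\mu_1,\ldots,\mu_n$ are the nonzero eigenvalues of $T_{\lambda+i0}(H_0)J$ counted with algebraic multiplicity. A direct tracking of continuous arguments on the real $r$-line shows that each factor with $\Im\mu_j\ne 0$ has total winding $-2\pi\,\mathrm{sign}(\Im\mu_j)$ as $r$ sweeps from $-\infty$ to $+\infty,$ while real $\mu_j$ contribute trivially to $\xia.$ On the singular side, using $\mathrm{sign}\,\Im(-1/\mu)=\mathrm{sign}\,\Im\mu,$ the resonance index at $r_0=-1/\mu_j^0$ (with $\mu_j^0\in\mbR$) equals the net sign-count of the perturbed eigenvalues $\mu_j(y)$ of $T_{\lambda+iy}(H_0)J$ belonging to the group of $\mu_j^0$ for small $y>0.$ Combining the two contributions,
$$
\lim_{r\to\infty}\xi(\lambda;H_r,H_{-r})=\sum_{j=1}^{n}\mathrm{sign}(\Im\mu_j(y))
$$
for any sufficiently small $y>0.$

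The main obstacle, and where the real work lies, is identifying this sum with $\mathrm{sig}(V).$ The plan is a deformation argument: set $A_t=(1-t)T_{\lambda+iy}(H_0)+tiI$ for $t\in[0,1].$ Since $\Im T_{\lambda+iy}(H_0)=yF|R_{\lambda+iy}(H_0)|^2F^*$ is strictly positive (by injectivity of $F^*,$ which follows from the trivial co-kernel of $F$), so is $\Im A_t$ for every $t\in[0,1].$ Hence $A_t$ is invertible and $A_tJ$ has constant rank $n.$ If $A_tJ\psi=\mu\psi$ with $\mu\ne 0,$ then $J\psi\ne 0$ and the identity $\Im\mu\cdot\langle\psi,J\psi\rangle=\langle J\psi,\Im A_t\,J\psi\rangle>0$ forces $\Im\mu\ne 0,$ so no eigenvalue of $A_tJ$ crosses the real axis during the deformation. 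Consequently $\sum_j\mathrm{sign}(\Im\mu_j(t))$ is an integer-valued continuous function of $t,$ hence constant. At $t=1$ the operator is $iJ,$ whose nonzero eigenvalues are $i\lambda_k$ for the nonzero eigenvalues $\lambda_k$ of $J,$ and the sum of signs is exactly $\mathrm{sig}(J).$ Finally $\mathrm{sig}(J)=\mathrm{sig}(V),$ since $V=F^*JF$ has the same nonzero spectrum as the self-adjoint operator $\sqrt{FF^*}\,J\,\sqrt{FF^*},$ which has the same signature as $J$ by Sylvester's law applied to the positive, invertible conjugator $\sqrt{FF^*}.$
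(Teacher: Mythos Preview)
Your argument actually targets the pointwise statement (Theorem~\ref{T: a beauty theorem}), not the Corollary. The paper's own proof of the Corollary is two lines: choose a frame $F$ with $V\in\clA(F)$, observe that a.e.\ $\lambda$ lies in $\Lambda(H_0,F)$ and hence satisfies the hypotheses of Theorem~\ref{T: a beauty theorem}, and integrate the pointwise limit over $\Delta$ (dominated convergence with $|\xi|\le\rank V$). You never mention this passage from pointwise to integral, nor the choice of frame; both are easy, but they are the whole content of the Corollary once Theorem~\ref{T: a beauty theorem} is in hand.

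For the pointwise limit itself, your route is genuinely different from the paper's. The paper integrates $F_z(s)=\frac{1}{\pi}\Tr(\Im R_z(H_s)V)$ over a large half-disk in the $s$-plane: the total contour integral equals $\mathrm{sig}(V)$ by residues and Krein's Theorem~\ref{T: Krein's thm}, the diameter gives $\xi(\lambda;H_R,H_{-R})$ as $y\to0^+$, and the arc is shown to vanish as $R\to\infty$. You instead split $\xi=\xia+\xis$, read off $\lim\xia$ from the winding of the Birman--Krein determinant and $\lim\xis$ from the finitely many resonance indices, and then identify the combined sum via a homotopy. This is an appealing alternative; it makes the separate large-$r$ behaviour of $\xia$ and $\xis$ explicit, which the paper only records afterwards as a corollary.

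There are, however, two concrete problems in the execution. First, $\sqrt{FF^*}$ is compact (since $F$ is Hilbert--Schmidt), not invertible, so your invocation of Sylvester's law is unjustified as stated. The clean repair is to notice that your deformation argument is precisely Krein's Theorem~\ref{T: Krein's thm} applied to $T_{\lambda+iy}(H_0)J$; since $T_z(H_0)J$ and $R_z(H_0)V$ share their nonzero spectrum, that theorem gives $\sum_j\mathrm{sign}(\Im\mu_j(y))=\rank V_+-\rank V_-=\mathrm{sig}(V)$ directly, with no reference to $\mathrm{sig}(J)$. Second, and more subtly, you assert that $T_{\lambda+i0}(H_0)J$ has exactly $n$ nonzero eigenvalues. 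This need not hold: the compression $PJT_{\lambda+i0}(H_0)P|_{\range J}$ can be singular when $\Im T_{\lambda+i0}(H_0)$ fails to be strictly positive on $\range J$. In that case some eigenvalues $\mu_k(y)$ tend to $0$ as $y\to0^+$; they produce no winding in $\det S$ and no finite resonance point, so your bookkeeping $\lim\xi=\sum_{j=1}^{n}\mathrm{sign}(\Im\mu_j(y))$ silently adds terms that were never accounted for in either $\xia$ or $\xis$. The paper's contour argument sidesteps this because all poles of $F_{\lambda+iy}$, including those escaping to infinity, sit inside the half-disk for large $R$ and small $y$.
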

Indeed, for any finite rank operator $V$ there exists a frame $F$ such that $V \in \clA(F),$
and every~$\lambda$ from the set of full Lebesgue measure $\Lambda(H_0,F)$ satisfies conditions of Theorem \ref{T: a beauty theorem}.

We shall need the following well-known theorem.
\begin{thm} \label{T: Krein's thm} Let $H$ be a self-adjoint operator on $\hilb,$ and let $V$ be a finite rank self-adjoint operator on $\hilb.$
If $\Im z > 0,$ then the operator $R_z(H)V$ has $\rank(V_+)$ eigenvalues (counting eigenvalues) in the upper half-plane $\mbC_+,$
it has $\rank(V_-)$ eigenvalues (counting eigenvalues) in the lower half-plane $\mbC_-$ and it has no eigenvalues on the real line except zero.
\end{thm}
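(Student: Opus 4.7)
The plan is to reduce the eigenvalue count for $R_z(H)V$ on $\hilb$ to one for a small matrix, and then to deform that matrix by a linear homotopy to a transparent diagonal model. Since $V$ is self-adjoint of finite rank with $n_\pm := \rank(V_\pm)$ and $n := n_+ + n_-$, spectral decomposition of $V$ yields a factorisation $V = G^* \Sigma G$, where $G \colon \hilb \to \mbC^n$ is surjective (so $G^*$ is injective) and $\Sigma$ is the $n \times n$ diagonal matrix with $n_+$ entries equal to $+1$ and $n_-$ entries equal to $-1$. Setting $M(z) := G R_z(H) G^*$, the standard $AB$ vs.\ $BA$ trick implies that the non-zero spectrum (with algebraic multiplicities) of $R_z(H) V = R_z(H) G^* \cdot \Sigma G$ agrees with that of the $n \times n$ matrix $\Sigma G \cdot R_z(H) G^* = \Sigma M(z)$, and equivalently with that of $M(z) \Sigma$. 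Moreover, $\Im M(z) = \Im z \cdot G R_z(H) (G R_z(H))^*$ is strictly positive definite on $\mbC^n$: if $R_{\bar z}(H) G^* \phi = 0$ then $G^* \phi = 0$ (as $R_{\bar z}(H)$ is invertible), hence $\phi = 0$ by injectivity of $G^*$.

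Now write $M(z) = X + iY$ with $X = X^*$ and $Y > 0$, and consider the linear homotopy $M^{(t)} := (1-t) i I + t \, M(z)$ for $t \in [0,1]$. Its imaginary part $(1-t) I + t Y$ is strictly positive definite throughout, so $M^{(t)}$ is invertible; a Cayley-type calculation (factor out $Y^{1/2}$ from $M^{(t)}$ and reduce $(M^{(t)})^{-1}$ to $(B + iI)^{-1}$ with $B = B^*$) shows that $\Im (M^{(t)})^{-1}$ is strictly \emph{negative} definite. I claim $M^{(t)} \Sigma$ has no real eigenvalue for any $t \in [0,1]$: if $M^{(t)} \Sigma \phi = \lambda \phi$ with $\phi \neq 0$ and $\lambda \in \mbR$, then $\lambda = 0$ would force $\Sigma \phi = 0$ (since $M^{(t)}$ is invertible), hence $\phi = 0$; while for $\lambda \neq 0$ the equation rewrites as $\lambda (M^{(t)})^{-1} \phi = \Sigma \phi$, and pairing with $\phi$ yields $\lambda \langle (M^{(t)})^{-1} \phi, \phi \rangle = \langle \Sigma \phi, \phi \rangle \in \mbR$, contradicting that the left-hand side has strictly negative imaginary part.

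Since the eigenvalues of $M^{(t)} \Sigma$ depend continuously on $t$ as an unordered multiset in $\mbC$ and never touch the real axis, the counts in $\mbC_+$ and in $\mbC_-$ are invariant along the homotopy. At $t = 0$ the matrix $M^{(0)} \Sigma = i \Sigma$ has $n_+$ eigenvalues equal to $+i$ and $n_-$ equal to $-i$; hence at $t = 1$ the matrix $M(z) \Sigma$ has exactly $n_+$ eigenvalues in $\mbC_+$ and $n_-$ in $\mbC_-$, with no non-zero real eigenvalues. Transferring back via the factorisation gives the same counts for $R_z(H) V$, which is the claim. The only mildly delicate point is the Cayley-type verification that $\Im (M^{(t)})^{-1}$ is strictly negative definite; once that sign is secured, the remainder is routine continuity of eigenvalues in a fixed finite-dimensional space, with boundedness automatic because $M^{(t)}$ varies in a compact family of invertible matrices.
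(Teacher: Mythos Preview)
Your argument is correct. The reduction via $V=G^{*}\Sigma G$ and the $AB$ versus $BA$ identification transfers the non-zero spectrum of $R_z(H)V$ to that of the $n\times n$ matrix $M(z)\Sigma$, and the key fact that $\Im M(z)>0$ (hence $\Im (M^{(t)})^{-1}<0$ along the whole homotopy) indeed excludes real eigenvalues of $M^{(t)}\Sigma$; the Cayley-type computation you sketch does exactly this, since for $A=B+iC$ with $C>0$ one has $A^{-1}=C^{-1/2}(D+iI)^{-1}C^{-1/2}$ with $D=D^{*}$, and $\Im(D+iI)^{-1}=-(D^{2}+I)^{-1}<0$. Continuity of the eigenvalues over the compact parameter interval then pins down the counts from the endpoint $i\Sigma$.

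Note, however, that the paper does not prove this theorem at all: it states the result as well known and refers to Krein's 1953 paper (assertion~($\alpha$) in \S3.4 of \cite{Kr53MS}) for a derivation. Your route is therefore genuinely different in that it is self-contained and elementary: it bypasses the Herglotz/Nevanlinna machinery underlying Krein's treatment by a direct linear homotopy within the set of matrices with strictly positive imaginary part, reading off the answer at the trivial endpoint $i\Sigma$. What Krein's approach buys is a sharper localisation of the eigenvalues (cf.\ also the reference to \cite{Wi67JMAA} in the paper), whereas your homotopy yields only the half-plane counts---which is precisely what is needed here.
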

A proof of this theorem can easily be derived from assertion ($\alpha$) in subsection 4 of \cite[\S 3]{Kr53MS}.
Concerning a more exact position of eigenvalues in the upper/lower half-planes see also \cite{Wi67JMAA}.

Since the spectral measures of operators $R_z(H)V$ and $T_z(H)J$ coincide, this theorem holds for the operator $T_z(H)J$ as well.

\medskip

{\it Proof of Theorem \ref{T: a beauty theorem}} \
%

(A) Let $L$ be a closed contour of integration in the complex coupling constant plane, 
which consists of two pieces: a piece $L_2,$ which goes straight from $-R$ to $R$ and $L_1$ which returns back
from $R$ to $-R$ via a half-circle of radius $R$ in the upper half-plane,
where $R$ is sufficiently large so that the disk $\set{z \in \mbC \colon \abs{z} < R}$ contains
all eigenvalues of $T_{\lambda+iy}(H_0)J$ for all small enough $y>0.$

Let $F_z(s) = \frac 1\pi \Tr (T_{z}(H_0)J).$ By Theorem \ref{T: res F(s)=res.ind}, for all small enough $y>0$ the integral
$$
  \int_L F_z(s)\,ds
$$
is equal to $N_+-N_-,$ where $N_\pm$ is the total number of eigenvalues of $R_z(H)V$ in $\mbC_\pm.$
Hence, by Theorem \ref{T: Krein's thm} the integral is equal to the signature of the operator~$V.$

(B) It follows from the proof of Theorem \ref{T: Phis = res-index} that as $y \to 0^+$ the integral over $L_1$ goes to $\xi(\lambda; H_{-R},H_R).$
Also, as $y \to 0^+,$ the integral over $L_2$ converges.

(C) Here we show that the integral of $F_{\lambda+i0}(s)$ over the arc $L_2$ goes to zero as $R \to \infty.$
Since, by (A), the integral over $L$ is constant and is equal to the signature of $V,$
this will imply that the integral of $F_{\lambda+i0}(s)$ over $L_1$ converges to the signature of $V$ as $R \to \infty,$
thus completing the proof.

The operator $T_{\lambda+i0}(H_0)J$ has a finite number of non-zero eigenvalues $\lambda_j = -1/s_j, \ j =1, \ldots, N.$
It follows that the operator
$$
  \brs{1 + s T_z(H_0)J}^{-1}-1
$$
has non-zero eigenvalues
${s}/\brs{s_j-s}, \ j =1, \ldots, N.$
So, the Lidskii theorem (see e.g. \cite{GK,SimTrId2}) implies that
$$
  \Tr\brs{\brs{1 + s T_z(H_0)J}^{-1}-1} = \sum_{j=1}^N \frac{s}{s_j-s}.
$$
Similarly,
$$
  \Tr\brs{\brs{1 + s T_{\bar z}(H_0)J}^{-1}-1} = \sum_{j=1}^N \frac{s}{\bar s_j-s}.
$$
Combining the last two equalities we get that
$$
  \Tr\SqBrs{\brs{1 + sT_{\bar z}(H_0)J}^{-1} - \brs{1 + s T_z(H_0)J}^{-1}} = \sum_{j=1}^N \frac{s(s_j-\bar s_j)}{(s-s_j)(s-\bar s_j)}.
$$
This equality implies that the trace in the left hand side converges to zero uniformly with respect to $s \in L_2,$ as $R = \abs{s} \to \infty.$
It now follows from Corollary \ref{C: Fz(s)=Tr[(1+RV)(-1)-...]} that the integral of $F_{\lambda+i0}(s)$ over the arc $L_2$
goes to zero as $R \to \infty.$ Proof is complete.
$\Box$

\begin{cor} Let $H_0,$ $V$ and~$\lambda$ be as in Theorem \ref{T: a beauty theorem}. The limit
$$
  \xia(\lambda; H_{\infty},H_{-\infty}) := \lim_{r \to \infty} \xia(\lambda; H_r,H_{-r})
$$
exists and its value is an integer number. This integer is equal to the number of eigenvalues
of $T_{\lambda+i0}(H_0)J$ in $\mbC_+$ minus the number of eigenvalues of $T_{\lambda+i0}(H_0)J$ in $\mbC_-.$
Also,
$$
  \xis(\lambda; H_{\infty},H_{-\infty}) = \sum_{r \in \mbR} \ind_{res}(\lambda; H_r,V).
$$
\end{cor}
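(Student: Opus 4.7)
The plan is to split the assertion into the $\xia$ and $\xis$ parts and handle each separately. For the $\xia$ part I would use Theorem~\ref{T: G(l+i0)=xia} to write $\xia(\lambda;H_R,H_{-R})$ as a contour integral of $F_{\lambda+i0}$, close the contour with a large upper semicircle, and evaluate the limit via the residue theorem, in the same spirit as the proof of Theorem~\ref{T: a beauty theorem}. For the $\xis$ part I would apply Theorem~\ref{T: Phis = res-index} at each of the (finitely many, since $V$ has finite rank) resonance points on the line $\set{H_r}$.

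Concretely, by Theorem~\ref{T: G(l+i0)=xia},
\[
  \xia(\lambda; H_R, H_{-R}) = \int_{L_1^R} F_{\lambda+i0}(s)\,ds,
\]
where $L_1^R$ runs from $-R$ to $R$ circumventing the (finitely many) real resonance points from above. Closing with the upper semicircle $\Gamma_R$ of radius $R$ yields a positively oriented simple closed contour which, for $R$ large enough, encloses all poles of $F_{\lambda+i0}$ in $\mbC_+$. Each such pole $s_0$ has $-1/s_0\in\mbC_+$ and is an eigenvalue of $R_{\lambda+i0}(H_0)V$, of $R_{\lambda-i0}(H_0)V$, or of both, and by Theorem~\ref{T: res F(s)=res.ind} the quantity $2\pi i\,\res_{s_0}F_{\lambda+i0}$ is the signed difference of the two algebraic multiplicities. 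Summing over all poles in $\mbC_+$, and using that the $\mbC_+$ eigenvalues of $R_{\lambda-i0}(H_0)V$ are, with multiplicity, the complex conjugates of the $\mbC_-$ eigenvalues of $R_{\lambda+i0}(H_0)V$, the total telescopes to $N_+(H_0)-N_-(H_0)$.

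The arc integral $\int_{\Gamma_R} F_{\lambda+i0}$ tends to $0$ as $R\to\infty$ by the estimate $F_{\lambda+i0}(s)=O(|s|^{-2})$, which follows from the finite-rank partial-fractions identity derived in step~(C) of the proof of Theorem~\ref{T: a beauty theorem}. This establishes the existence of the limit and the identification $\xia(\lambda;H_\infty,H_{-\infty})=N_+(H_0)-N_-(H_0)$. For the $\xis$ assertion, the finite rank of $V$ forces $T_{\lambda+i0}(H_0)J$ to have only finitely many nonzero eigenvalues, hence only finitely many real ones, which are precisely the resonance points on the line $\set{H_r}$ at~$\lambda$. Theorem~\ref{T: Phis = res-index} together with pointwise path-additivity of $\xis$ then gives
\[
  \xis(\lambda; H_R, H_{-R}) = \sum_{r\in R(\lambda;\set{H_s},V)\cap(-R,R)} \ind_{res}(\lambda; H_r, V),
\]
which stabilises at $\sum_{r\in\mbR}\ind_{res}(\lambda;H_r,V)$ for all $R$ larger than the finitely many resonance points.

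The only subtlety I foresee is the residue bookkeeping in $\mbC_+$: a single point there could in principle be an eigenvalue of both $R_{\lambda+i0}(H_0)V$ and $R_{\lambda-i0}(H_0)V$ simultaneously. The signed-multiplicity form of Theorem~\ref{T: res F(s)=res.ind} is exactly what makes such coincidences innocuous in the sum, so this is a place to be careful rather than a genuine obstacle; the deep machinery is already encapsulated in Theorems~\ref{T: G(l+i0)=xia}, \ref{T: res F(s)=res.ind}, \ref{T: a beauty theorem} and \ref{T: Phis = res-index}.
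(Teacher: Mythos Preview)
Your argument is correct. The paper states this corollary without proof, leaving it as an immediate consequence of the preceding material: once Theorem~\ref{T: a beauty theorem} gives $\lim_{r\to\infty}\xi(\lambda;H_r,H_{-r})=\mathrm{sign}(V)$ and Theorem~\ref{T: Phis = res-index} identifies $\xis$ with the (finite) sum of resonance indices, the $\xia$ assertion follows from $\xia=\xi-\xis$ together with the elementary eigenvalue count $\mathrm{sign}(V)=N_+^{(0)}-N_-^{(0)}+\sum_{r}\ind_{res}(\lambda;H_r,V)$, which one reads off from the continuity of the spectrum of $T_{\lambda+iy}(H_0)J$ as $y\to 0^+$ and Theorem~\ref{T: Krein's thm}.

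Your route is slightly different but entirely in the paper's spirit: you compute $\xia$ directly by re-running the contour argument of Theorem~\ref{T: a beauty theorem} with the $L_1$-type contour of Theorem~\ref{T: G(l+i0)=xia} (circumventing real resonance points from above) in place of the straight segment. This avoids the subtraction step and gives the eigenvalue-count formula for $\xia$ immediately from the residues. The only point to make explicit is that Theorem~\ref{T: res F(s)=res.ind} is stated for $z\notin\mbR$; its proof, however, goes through verbatim for $z=\lambda+i0$ once $H_0$ is regular at~$\lambda$, since both $T_{\lambda\pm i0}(H_0)$ exist and Corollary~\ref{C: Fz(s)=Tr[(1+RV)(-1)-...]} applies. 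Your handling of the possible coincidence of eigenvalues of $R_{\lambda+i0}(H_0)V$ and $R_{\lambda-i0}(H_0)V$ at the same point of~$\mbC_+$ is exactly right: the signed-multiplicity form of the residue absorbs this automatically.
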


\section{A non-trivial example of singular spectral shift function}
\label{S: non-trivial example}
Let $H$ and $V$ be two self-adjoint operators on a Hilbert space $\hilb.$ By definition, the pair $(H,V)$ is irreducible,
if the only non-zero invariant subspace for both of these operators is $\hilb.$

By a non-trivial example of singular spectral shift function we mean a pair $(H,V)$ of irreducible self-adjoint operators,
such that the restriction of the singular part of the spectral shift function $\xis(\lambda; H_0+V,H_0)$ to the absolutely continuous (or essential) spectrum
of $H$ is non-zero. Note that when $V$ is trace-class, all operators $H_r = H_0+rV$ have the same absolutely continuous (and essential) spectrum.
Reducible pairs $(H,V)$ with non-zero singular spectral shift function on absolutely continuous spectrum are trivial to construct.

Let $K$ be a Cantor subset (see e.g. \cite{Nat}) of $[-1,1]$ symmetric with respect to $0$ and such that the Lebesgue measure of $K$ is equal to $1.$
Let $\chi_U$ be the characteristic function of the open set $U = [-1,1] \setminus K$ and let
$$
  F(x) = \int_0^x \chi_U(t)\,dt, \ \ x \in \mbR.
$$
The function $F(x)$ is continuous on $\mbR$ and
\begin{equation} \label{F: F'(x)=0}
  F'(x) = 0 \ \text{for a.e.} \ x \in K.
\end{equation}

Let $H_0$ be the operator of multiplication by $x$ in $\hilb = L_2([-1,1], dF(x)).$ Let $v = 1$ and let $V = \scal{v}{\cdot}v.$
Obviously, the operator $H_0$ is a continuous self-adjoint operator and its spectrum is $[-1,1].$

\begin{lemma} The pair $H_0$ and $V$ is irreducible.
\end{lemma}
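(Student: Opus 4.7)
My approach is to reduce irreducibility of the pair $(H_0,V)$ to the cyclicity of the vector $v=1$ for $H_0$. First I would identify the measure $dF$ explicitly: since $F(x)=\int_0^x\chi_U(t)\,dt$ is absolutely continuous with derivative $\chi_U$, the Lebesgue--Stieltjes measure is $dF=\chi_U(x)\,dx$, so $\hilb = L_2([-1,1],\chi_U\,dx)$. Moreover $\|v\|^2 = \int_{-1}^{1}\chi_U\,dx = |U| = 1$ (using $|K|=1$), so $V = \scal{v}{\cdot}v$ is in fact the rank-one orthogonal projection onto $\mbC v$. Note that $H_0$ is bounded with spectrum $[-1,1]$, so any closed subspace of $\hilb$ that is invariant under $H_0$ is automatically reducing.

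Now let $M\subseteq\hilb$ be any closed subspace invariant under both $H_0$ and $V$. Because $V$ has one-dimensional range $\mbC v$, the $V$-invariance of $M$ gives the clean dichotomy that either $v\in M$ or $v\perp M$: if there exists $f\in M$ with $\scal{v}{f}\ne 0$, then $v=\scal{v}{f}^{-1}Vf\in M$; otherwise $\scal{v}{f}=0$ for every $f\in M$. The cyclicity of $v$ for $H_0$ is the standard density statement: since $dF$ is a finite Borel measure on the compact interval $[-1,1]$, $C([-1,1])$ is dense in $L_2(dF)$, and Stone--Weierstrass makes polynomials uniformly dense in $C([-1,1])$; hence $\mathrm{span}\set{x^n : n\ge 0}$ is dense in $\hilb$.

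Granted cyclicity, both cases close immediately. If $v\in M$ then $H_0^n v = x^n \in M$ for all $n\ge 0$, so $M$ contains all polynomials and $M=\hilb$. If instead $v\perp M$, then $v\in M^\perp$, and since $H_0$ is bounded self-adjoint $M^\perp$ is $H_0$-invariant; the same cyclicity argument then yields $M^\perp=\hilb$, i.e.\ $M=\set{0}$. There is no real obstacle; the only moment of care is the identification $dF=\chi_U\,dx$. The Cantor structure of $K$ and the condition that $F'$ vanishes a.e.\ on $K$ play no role in this lemma, and will presumably enter later to arrange that $\xis$ is nontrivial on the absolutely continuous spectrum.
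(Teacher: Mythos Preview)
Your proof is correct and follows essentially the same line as the paper's: both reduce the problem to the cyclicity of $v=1$ for $H_0$ via density of polynomials in $L_2([-1,1],dF)$, combined with the rank-one structure of $V$ to force the dichotomy $v\in M$ or $v\perp M$. The paper organizes the argument slightly differently---it assumes $M\neq\{0\}$, picks $0\neq u\in M$, and uses $\langle v, H_0^n u\rangle=\langle H_0^n v,u\rangle$ directly to rule out $v\perp M$---whereas you pass through invariance of $M^\perp$; but this is the same idea.
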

\begin{proof} 
(A) If $\clK \neq \set{0}$ is an invariant subspace for both $H_0$ and $V,$ then $\clK$
is not orthogonal to $v.$

Proof. Let $0 \neq u \in \clK.$ It follows that $H_0^n u \in \clK$ for all $n=0,1,2,\ldots$
If all these vectors are orthogonal to $v,$ then $u$ is orthogonal to all vectors $H_0^n v = x^n.$
Since the set of polynomials is dense in $L_2([-1,1],dF),$
it follows that $u=0.$ This contradiction shows that $\clK$ is not orthogonal to $v.$

(B) If $\clK \neq \set{0}$ is an invariant subspace for both $H_0$ and $V,$ then it follows from (A) that $\clK$ contains $v = 1,$
and consequently, it contains all polynomials. It follows that $\clK = \hilb.$
\end{proof}

\begin{lemma} \label{L: p.v.int is non-zero} The principal value integral
$$
  \text{p.v.} \int_\mbR \frac{dF(x)}{x-\lambda}
$$
exists and is non-zero for a.e. $\lambda \in K.$
\end{lemma}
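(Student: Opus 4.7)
The plan is to recognize the integral as the boundary value of a Herglotz (Nevanlinna) function on the upper half-plane and then invoke boundary uniqueness to rule out vanishing on a set of positive measure. Since $dF(x) = \chi_U(x)\,dx,$ the expression in question equals $\pi$ times the Hilbert transform of $\chi_U \in L^2(\mbR)$ at~$\lambda,$ whose existence for a.e.~$\lambda \in \mbR$ (and hence a.e.~$\lambda \in K$) is classical, either via $L^2$-boundedness of the Hilbert transform or, equivalently, via the Sokhotski--Plemelj formula for the Cauchy integral of an absolutely continuous measure.

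Introduce the Cauchy transform
$$
  G(z) = \int_{\mbR} \frac{dF(x)}{x-z}, \qquad z \in \mbC \setminus \mbR.
$$
Because $F'(x) = \chi_U(x) \geq 0$ and $\int \chi_U > 0,$ $G$ is a non-constant Herglotz function with $\Im G(z) > 0$ on $\mbC_+.$ Sokhotski--Plemelj yields, for a.e.~$\lambda,$
$$
  G(\lambda+i0) = \pi i F'(\lambda) + \text{p.v.}\!\int_\mbR \frac{dF(x)}{x-\lambda}.
$$
By (\ref{F: F'(x)=0}), $F'(\lambda) = 0$ for a.e.~$\lambda \in K,$ so $G(\lambda+i0)$ is real and equals the principal value integral for a.e.~$\lambda \in K.$

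Now suppose, for contradiction, that the principal value vanishes on some $E \subset K$ with $|E| > 0;$ equivalently, $G(\lambda+i0) = 0$ on $E.$ The Cayley transform $\Phi(w) = (w-i)/(w+i)$ carries $\mbC_+$ into the unit disk, so $\Phi \circ G \in H^\infty(\mbC_+).$ Its non-tangential boundary value on $E$ is $\Phi(0) = -1,$ so the Lusin--Privalov uniqueness theorem for bounded analytic functions forces $\Phi \circ G \equiv -1$ on $\mbC_+,$ hence $G \equiv 0,$ contradicting $\Im G > 0.$ Therefore the principal value is non-zero for a.e.~$\lambda \in K.$

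The only delicate input is the Sokhotski--Plemelj boundary behaviour along points of the Cantor set~$K$: one needs both existence of the non-tangential limit and the vanishing of its imaginary part there, both of which reduce to Fatou's theorem together with the Lebesgue-differentiation fact that $U$ has density zero at a.e.\ point of $K.$ Once this boundary data is in place, the Lusin--Privalov step is immediate, so I do not anticipate any real obstacle beyond the careful verification of these classical boundary-value statements.
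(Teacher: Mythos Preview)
Your argument is correct and matches the paper's proof essentially step for step: both establish a.e.\ existence via classical Cauchy/Hilbert transform theory, identify the principal value on $K$ with the real boundary value of the Cauchy--Stieltjes transform using $F'=0$ a.e.\ on $K$, and then invoke Lusin--Privalov to rule out vanishing on a set of positive measure. The only cosmetic difference is that you pass through the Cayley transform to land in $H^\infty$ before applying uniqueness, whereas the paper cites the Lusin--Privalov theorem directly for the Cauchy--Stieltjes transform.
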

\begin{proof} This integral exists for a.e. $\lambda \in \mbR$ by \cite[Theorem 1.2.5]{Ya}.
It follows from (\ref{F: F'(x)=0}) that if the integral above is zero on a subset of $K$ of positive Lebesgue measure, then
by the Luzin-Privalov theorem (see e.g. \cite[Theorem 1.2.1]{Ya}) the Cauchy-Stieltjes transform of $F$ must be zero,
and hence so must be the function~$F.$
This contradiction completes the proof.
\end{proof}

\begin{thm} Let $H_0$ and $V$ be as above and let $H_r = H_0+rV.$ For all large enough $r>0$
the singular part of the spectral shift function for the pair $(H_0,H_r)$ is non-zero on
the absolutely continuous spectrum $[-1,1]$ of the operator $H_r$ as an element of $L_1(-1,1).$
\end{thm}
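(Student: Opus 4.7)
The plan is to use Theorem~\ref{T: Phis = res-index} to compute $\xis(\lambda;H_r,H_0)$ explicitly by locating the resonance points of the path $s \mapsto H_s$ and computing their resonance indices. Because $V$ has rank one, the whole analysis reduces to the scalar Cauchy--Stieltjes transform
$$
  m(z) := \scal{v}{R_z(H_0)v} = \int_{-1}^1 \frac{dF(x)}{x-z}.
$$
By (\ref{F: F'(x)=0}), $\Im m(\lambda+i0) = \pi F'(\lambda) = 0$ a.e.\ on~$K$, while Lemma~\ref{L: p.v.int is non-zero} asserts that $a(\lambda) := \Re m(\lambda+i0) \ne 0$ a.e.\ on~$K$. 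Hence at almost every $\lambda \in K$ the boundary value $m(\lambda+i0) = a(\lambda)$ is a nonzero real number, and such $\lambda$ are essentially regular.

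A direct application of the second resolvent identity, using the rank-one identity $(1+sVR_z(H_0))v = (1+s\,m(z))v$, gives the Aronszajn--Krein type formula
$$
  \scal{v}{R_z(H_s)v} = \frac{m(z)}{1+s\,m(z)}.
$$
At $z = \lambda+i0$ with $\lambda \in K$ a.e., the right hand side is real for every $s \ne s_0(\lambda) := -1/a(\lambda)$. Consequently $\tfrac{1}{\pi}\Im\scal{v}{R_{\lambda+i0}(H_s)v} = 0$, and so $\xia(\lambda;H_r,H_0) = 0$ for a.e.\ $\lambda \in K$ and every $r \in \mbR$. Thus on $K$ the spectral shift function coincides with its singular part.

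To compute the resonance index at $s_0(\lambda)$, replace $\lambda+i0$ by $\lambda+iy$ with $y>0$: one has $\Im m(\lambda+iy)>0$, hence the unique pole $s(y) = -1/m(\lambda+iy)$ of $f_{\lambda+iy}(s) = (1+s\,m(\lambda+iy))^{-1}$ satisfies $\Im s(y) > 0$. Therefore $N_+ = 1$, $N_- = 0$, and $\ind_{res}(\lambda;H_{s_0(\lambda)},V) = 1$. Applying Theorem~\ref{T: Phis = res-index} (summed over resonance points in $(0,r)$) yields, for a.e.\ $\lambda \in K$ and every $r > 0$,
$$
  \xis(\lambda;H_r,H_0) = \chi_{\set{a(\lambda) < -1/r}}(\lambda).
$$

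Finally, the symmetry $K = -K$ makes $dF$ an even measure, so $m(-z) = -m(z)$ and $a(-\lambda) = -a(\lambda)$. Combined with $a \ne 0$ a.e.\ on~$K$ this gives $\abs{\set{\lambda \in K : a(\lambda) < 0}} = \tfrac{1}{2}\abs{K} = 1/2$, and monotone convergence yields $\abs{\set{a < -1/r} \cap K} \to 1/2$ as $r \to \infty$. Hence for all sufficiently large $r>0$ the function $\xis(\cdot;H_r,H_0)$ is non-zero in $L_1(-1,1)$. The only minor technical point I anticipate is embedding $V$ into the frame-$F$ formalism, which is done by choosing any frame whose range contains $v$ and letting $J$ be the induced rank-one operator on~$\clK$; this does not affect the scalar computations above.
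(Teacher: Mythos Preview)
Your proof is correct and follows essentially the same route as the paper: identify the unique resonance point $s_0(\lambda)=-1/a(\lambda)$ via the rank-one formula $1+s\,m(z)=0$, check that the pole lies in $\mbC_+$ when $y>0$ so that the resonance index equals $1$, and then use the oddness of $a$ on the symmetric set $K$ to produce a positive-measure set of $\lambda$ with $s_0(\lambda)\in(0,r)$ for large $r$. Your write-up is in fact a bit more explicit than the paper's (you spell out $N_+=1,\,N_-=0$ directly from $\Im m>0$, give the closed formula $\xis(\lambda;H_r,H_0)=\chi_{\{a(\lambda)<-1/r\}}$ on $K$, and note the bonus fact $\xia=0$ a.e.\ on $K$), but the underlying argument is the same.
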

\begin{proof} We shall show that the set of those $\lambda \in K,$ for which resonance index $\ind_{res}(\lambda; H_r,V)$
is equal to $1$ for some large enough $r,$ has positive Lebesgue measure. By Theorem \ref{T: Phis = res-index}, this will complete the proof.

Let us consider the function
$$
  f_z(r;H_0,V) = 1 - rT_z(H_r)J.
$$
Since we are interested in poles of this function, 
we can replace it by
$$
  f_z(r;H_0,V) = 1 - rR_z(H_r)V.
$$
By \cite[(6.7.3)]{Ya} (see also \cite[(8.14)]{Az3v5}), poles of this function are roots of the equation
$$
  1+r\scal{v}{R_z(H_0)v} = 0.
$$
By \cite[Theorem 1.2.5]{Ya} and (\ref{F: F'(x)=0}), for a.e. $\lambda \in K$ this equation for $z = \lambda +i0$ can be rewritten as
$$
  1 + r \int_{-1}^1 \frac{dF(x)}{x-\lambda} = 0,
$$
where the integral is the principal value integral.
So, as $\Im z \to 0$ the pole of the function $f_z(s)$ goes to the real number
$$
  r = - \brs{p.v. \int_{-1}^1 \frac{dF(x)}{x-\lambda}}^{-1}.
$$
By Lemma \ref{L: p.v.int is non-zero}, the right hand side is finite for a.e. $\lambda \in K.$
This means that $\ind_{res}(\lambda,H_r,V) = 1$ for a.e. $\lambda \in K.$
Since obviously the p.v. integral as a function of~$\lambda$ is odd, there are positive poles $r$ for a set of $\lambda \in K$
of Lebesgue measure $\frac {\abs{K}}2 = \frac 12.$ This completes the proof.
\end{proof}
It can be seen from the proof that for a.e. $\lambda \in K$
$$
  \lim_{r \to \infty} \xis(\lambda; H_r,H_0) = \chi_K(\lambda).
$$
On the other hand, for a finite $r$ a Borel support of the function $\xis(\lambda; H_r,H_0)$ has even more complicated nature.

\rndef{\emph}[1]{{\it #1}}

\mathsurround 0pt
\ndef{\AndSoOn}{$\dots$}


\end{document}